\definecolor{winered}{rgb}{0.7,0,0} % Peter Hintz's formatting colors
\definecolor{lessblue}{rgb}{0,0,0.7} % Peter Hintz's formatting colors
\definecolor{mypink1}{rgb}{0.858, 0.188, 0.478}
\numberwithin{equation}{section}
\theoremstyle{plain}
\newtheorem{Th}{Theorem}[section]
\newtheorem{Lemma}[Th]{Lemma}
\newtheorem{prop}[Th]{Proposition}
\newtheorem{definition}[Th]{Definition}
\theoremstyle{definition}
\newenvironment{remark}
  {\pushQED{\qed}\remarkx}
  {\popQED\endremarkx}
 \newcommand{\R}{\mathbb{R}}
\newcommand{\C}{\mathbb{C}}
\newcommand{\imag}{\operatorname{Im}}
\newcommand{\qwhere}{\qquad\operatorname{ where }\qquad}
\renewcommand{\real}{\operatorname{Re}}
\newcommand{\supp}{\operatorname{supp }}
\newcommand{\lecal}{\mathcal{L}\mathcal{E}}
\renewcommand{\norm}[1]{\left\lVert #1 \right\rVert}
\newcommand{\vertiii}[1]{{\left\vert\kern-0.25ex\left\vert\kern-0.25ex\left\vert #1 
    \right\vert\kern-0.25ex\right\vert\kern-0.25ex\right\vert}}
\newcommand{\inprod}[1]{\left\langle #1\right\rangle}
\title{Integrated Local Energy Decay for Damped Magnetic Wave Equations on Stationary Space-Times}
\author{Collin Kofroth}
\address{Johns Hopkins University Applied Physics Lab, 11100 Johns Hopkins Road, Laurel, MD 20723-6099}
\email{collin.kofroth@jhuapl.edu}
\subjclass[2020]{⟨35L05, 58J45, 35P25, 35Q93⟩}
\keywords{Local energy estimates; Asymptotically flat; Damped wave equation; Geometric control; Trapping}
\date{\today}
\begin{document}
\begin{abstract}
    We establish local energy decay for damped magnetic wave equations on stationary, asymptotically flat space-times subject to the geometric control condition. More specifically, we allow for the addition of time-independent magnetic and scalar potentials, which negatively affect energy coercivity and may add in unwieldy spectral effects. By asserting the non-existence of eigenvalues in the lower half-plane and resonances on the real line, we are able to apply spectral theory from the work of Metcalfe, Sterbenz, and Tataru and combine with a generalization of prior work by the present author to extend the latter work and establish local energy decay, under one additional symmetry hypothesis. Namely, we assume that the damping term is the dominant principal term in the skew-adjoint part of the damped wave operator within the region where the metric perturbation from that of Minkowski space is permitted to be large.  We also obtain an energy dichotomy if we do not prohibit non-zero real resonances. In order to make the structure of the argument more cohesive, we contextualize the present work within the requisite existing theory.
\end{abstract}
\maketitle
\tableofcontents
\section{Introduction}
\subsection{Background and Problem Set-Up}\label{Background}
In this work, we will establish (integrated) local energy decay for a broad class of damped wave equations on stationary, asymptotically flat space-times, including those which possess magnetic and scalar potential terms. This generalizes the result of \cites{Kof22}, which lacked potential terms and assumed that the damping was compactly-supported. The presence of the potentials allow for interaction with the damping and the existence of both complex eigenvalues and real resonances embedded in the continuous spectrum, all of which stand to inhibit local energy decay. We will discuss each of these obstructions carefully. Along the way, we also establish an energy dichotomy when non-zero real resonances are permissible.

To set up the problem which we will study, let $(\R^{4},g)$ be a Lorentzian manifold with coordinates $(t,x)\in\R\times\R^3$ and metric signature $(-+++).$ We will consider \textit{damped wave operators} of the form $$P=\Box_{A,g}+iaD_t+V,\qquad \Box_{A,g}=(D_\alpha+A_\alpha)g^{\alpha\beta}(D_\beta+A_\beta)$$ where $a, V,$ and the components of $A$ are smooth and complex-valued; here, we are utilizing the notation $D_\alpha =-i\partial_\alpha$, which should be interpreted as an operator. We will call $A$ the magnetic potential, $V$ the scalar potential, and $a$ the damping. If $a\equiv 0$, then we will simply refer to $P$ as a \textit{wave operator}. We will require that $P$ is \textit{asymptotically flat}. More precisely, we first define the family of norms $$\vertiii{h}_{k}=\sum\limits_{|\alpha|\leq k} \norm{\inprod{x}^{|\alpha|}\partial^\alpha h}_{\ell^1_j L^\infty([0,T]\times A_j)},$$ where $A_j=\{\inprod{x}\approx 2^j\}$ for $j\geq 0$ denote inhomogeneous dyadic annuli, $\partial=(\partial_t,\nabla_x)$ denotes the space-time gradient, and $\inprod{x}=(1+|x|^2)^{1/2}$ denotes the Japanese bracket of $x$. The notation $A\lesssim B$ indicates that $A\leq CB$ for some $C>0,$ and the notation $A\approx B$ means that $B\lesssim A\lesssim B$. In the definition of the $A_j$'s, we require that these implicit constants are compatible to cover $\R^3.$ We may now define the $AF$ norm topology via 
$$\norm{(h,A,a,V)}_{AF}=\vertiii{h}_2+\vertiii{\inprod{x}A}_{1}+\vertiii{\inprod{x} a}_{1}+\vertiii{\inprod{x}^2 V}_0.$$

\begin{definition}
    We say that $P$ is \textit{asymptotically flat} if 
    $\norm{(g-m,A,a,V)}_{AF}<\infty$,
    and  
\begin{align*}
    \norm{\inprod{x}^{|\alpha|}\partial^\alpha g}_{\ell_j^1L^\infty([0,T]\times A_j)}&\lesssim_{\alpha} 1,\qquad |\alpha|\geq 3,\\
    \norm{\inprod{x}^{|\alpha|+1}\partial^\alpha A}_{\ell_j^1L^\infty([0,T]\times A_j)}+\norm{\inprod{x}^{|\alpha|+1}\partial^\alpha a}_{\ell_j^1L^\infty([0,T]\times A_j)}&
   \lesssim_{\alpha} 1,\qquad |\alpha|\geq 2,\\
    \norm{\inprod{x}^{|\alpha|+2}\partial^\alpha V}_{\ell_j^1L^\infty([0,T]\times A_j)}&\lesssim_{\alpha} 1,\qquad |\alpha|\geq 1,
\end{align*} 
 where $m$ denotes the Minkowski metric.
\end{definition}
The latter conditions are placed to work in standard symbol classes, as opposed to those with limited regularity. They extend the flavor of the estimate present in the definition of asymptotic flatness, but one does not require summability over the differentiation indices. 

We will primarily be interested in when $P$ is \textit{stationary}, which occurs when $(g,A,a,V)$ are independent of $t$. In particular, $\partial_t$ is a Killing field for $g$ when $P$ is stationary. When $P$ is a damped wave operator, we will \textit{always} assume that
$a$ is independent of $t$, $a$ is non-negative for $|x|\leq 2R_0$, and $a$ is positive on an open subset of $\{|x|\leq R_0\}$. For $|x|\leq 2R_0$, the damping term in $P$ behaves as typical viscous damping, and we consider it as a general time-independent first-order perturbation term for $|x|>2R_0$. Further, we will assume throughout that $\partial_t$ is uniformly time-like while the constant time slices are uniformly space-like (i.e. $dt$ is uniformly time-like). The former condition implies an ellipticity condition on the terms in $P$ which are independent of $D_t$ (see Section \ref{ee defs} for more), while the latter guarantees that $g^{00}\gtrsim -1$, allowing $P$ to be reduced to a normal form of $g^{00}=-1$.   

Next, we introduce parameters that quantify various aspects of asymptotic flatness. Namely, we instantiate \begin{itemize}
    \item the parameters $M_0$, $R_0$ and $\textbf{c}$, which are such that \begin{align*}
       \norm{(g-m,A,a,V)}_{AF}&\leq M_0,\qquad\qquad\norm{(g-m,A,a,V)}_{AF_{>R_0}}\leq \textbf{c}\ll 1, 
        \end{align*} where the subscript denotes the restriction of the norm to $\{|x|>R_0\}.$ The parameter $\textbf{c}$ should be viewed as being fixed first, after which we find a suitable $R_0$ for which the above holds. 
            \item the sequence  $(c_j)_{j\geq \log_2 R_0}$ of positive real numbers satisfying that
        \begin{align*}
      \norm{(g-m,A,a,V)}_{AF(A_j)}\lesssim c_j,\qquad\qquad
        \sum\limits_j c_j\lesssim \textbf{c}, 
    \end{align*} where $\norm{\cdot}_{AF(A_j)}$ denotes the restriction of the $AF$ norm to the dyadic region $A_j.$ We may assume without any loss of generality that the sequence is slowly-varying, i.e. $$c_j/c_k\leq 2^{\delta|k-j|},\qquad\delta\ll 1.$$
    The integers $0\leq j<\log_2 R_0$ index finitely many dyadic regions. Since the union of such regions is compact, we can extend $(c_j)$ to such indices in an arbitrary manner, although we will assume that the appended terms in the sequence are positive and allow $(c_j)$ to remain slowly-varying.
    \end{itemize}
    \begin{remark} The implicit constants in our inequalities throughout this work are permitted to (and often will) depend on $\textbf{c}$ (which is universal), $M_0$ (also universal), and $R_0$ (only depends on $\textbf{c}$). However, they cannot depend on $T$; the negation would be problematic and not allow one to e.g. take the limit as $T\rightarrow\infty$.
    \end{remark}
    
Next, we introduce definitions regarding the skewness of $P-iaD_t$ and the size of the damping relative to the magnetic potential. We will generically refer to these as \textit{symmetry conditions}.
\begin{definition}\label{symm def}
We say that $P$ is 
\begin{itemize}
    \item of \textit{symmetric wave type} if $A$ and $V$  are real-valued.
    \item strongly $\varepsilon$-damping dominant if  $$a(x)\geq \varepsilon^{-1} |\imag A(x)|,\qquad |x|\leq 2R_0.$$ 
    \item weakly $\varepsilon$-damping dominant if
   $$\frac{{g^{0j} \xi_j\pm\sqrt{\left(g^{0j}\xi_j\right)^2+g^{ij}\xi_i\xi_j}}}{\pm 2 \sqrt{\left(g^{0j}\xi_j\right)^2+g^{ij}\xi_i\xi_j}}(a+g^{0\alpha} \imag A_\alpha) \pm\frac{\xi_k}{{2 \sqrt{\left(g^{0j}\xi_j\right)^2+g^{ij}\xi_i\xi_j}}}g^{k\alpha}\imag A_\alpha \geq \varepsilon a,\qquad|x|\leq 2R_0, \ \ \xi\neq 0.$$
\end{itemize}
\end{definition}
\begin{remark} The parameter $\varepsilon$ in the definition of the strongly $\varepsilon$-damping dominant condition should be viewed as being fixed after $R_0, M_0$. The same will not be true for the weakly $\varepsilon$-damping dominant condition, which we will assume holds for some $\varepsilon>0$ independent of the previously-discussed parameters. Additionally, the first two listed conditions are stated for $|x|\leq 2R_0$, since the condition is already true in the exterior region due to the growth conditions present in the $AF$ topology.
\end{remark}
\begin{remark}\label{weak strong damp}
The weakly $\varepsilon$-damping dominant condition, while seemingly esoteric, will arise naturally in our high frequency analysis and constitutes a sharpening of the more concrete and directly-verifiable strongly $\varepsilon$-damping dominant condition. First, we remark that we will need the latter to hold for sufficiently small $\varepsilon>0$ in order to obtain our results, whereas we only need the former to hold for \textit{some} $\varepsilon>0$. The range of $\varepsilon$ which are sufficient for our results to hold for strongly $\varepsilon$-damping dominant $P$ will guarantee that the weaker condition holds (for a different $\varepsilon$). We will now discuss their relationship more explicitly. 

In the language of Section \ref{high freq section}, the weakly $\varepsilon$-damping dominant condition may be written as 
$$\frac{b^\pm}{b^\pm-b^\mp}(a+g^{0\alpha}\imag A_\alpha)+ \frac{\xi_k}{b^\pm-b^\mp }g^{k\alpha}\imag A_\alpha \geq\varepsilon a,\qquad\qquad |x|\leq 2R_0,\qquad\xi\neq 0, $$ 
where  $$b^+(x,\xi)>0>b^-(x,\xi),\qquad b^\pm(x,\xi)\approx \pm|\xi|,\qquad\xi\neq 0.$$ Hence, the weakly $\varepsilon$-damping dominant condition may be recast (up to a fixed constant coefficient on the lower-bound side) in more simple terms as $$a+g^{0\alpha}\imag A_\alpha \pm \frac{\xi_k}{|\xi|}g^{k\alpha}\imag A_\alpha \geq \varepsilon a,\qquad |x|\leq 2R_0,\qquad \xi\neq 0$$ If $P$ is strongly $\varepsilon$-damping dominant, then 
\begin{align*}a+g^{0\alpha}\imag A_\alpha\pm \frac{\xi_k}{|\xi|}g^{k\alpha}\imag A_\alpha &\geq a-(2\norm{g-m}_{L^\infty_{<R_0}}+1)|\imag A|\\
&\geq \left(1-\varepsilon(2\norm{g-m}_{L^\infty_{<R_0}}+1)\right)a,\ \ \ \ |x|\leq 2R_0,\ \ \xi\neq 0.
\end{align*}
If $\varepsilon<(2\norm{g-m}_{L^\infty_{<R_0}}+1)^{-1},$  then the weakly $\varepsilon'$-damping dominant condition holds with $$\varepsilon'=1-\varepsilon(2\norm{g-m}_{L^\infty_{<R_0}}+1)>0.$$ In particular, $0<\varepsilon'< 1$. 

On the other hand, if $g=m$, then the simplified weakly $\varepsilon$-damping dominant condition  from above stipulates that 
$$(a-\imag A_0) \pm \frac{1}{|\xi|}\sum\limits_{k=1}^3\xi_k \imag A_k\geq  \varepsilon a,\qquad |x|\leq 2R_0,\qquad \xi\neq 0.$$ We are working with the simplified form for consistency with the strong-to-weak implication, although $b^\pm(x,\xi)=\pm |\xi|$ in the flat case, so one could be more precise if desired. Written differently, we have that $$a\left(1-\varepsilon\right)\geq\imag A_0\pm\sum\limits_{k=1}^3\frac{\xi_k}{|\xi|} \imag A_k,\qquad |x|\leq 2R_0,\qquad \xi\neq 0.$$ Considering $\xi=(\imag A_1, \imag A_2, \imag A_3)$ and taking the positive sign in the inequality gives
$$a\left(1-\varepsilon\right)\geq \imag A_0+|\xi|\geq |\imag A|$$ if $\imag A_0\geq 0,$ which shows that $P$ is strongly $\varepsilon'$-damping dominant, with $\varepsilon'=1-\varepsilon>0$ for $\varepsilon<1$ (which is consistent with the prior strong-to-weak implication established earlier). Hence, the conditions are equivalent in the flat case for non-negative $\imag A_0$ and appropriate (and different) $\varepsilon
$. Once one considers more complicated metrics, even  asymptotically Euclidean metrics, the presence of potentially large metric perturbations in the interior make the establishment of a positive $\varepsilon'$ difficult and the weak-to-strong implication less clear.
\end{remark}

The $\varepsilon$-damping dominant conditions are, to our knowledge, original and seemingly more natural in the context of damped waves; we will only discuss the strong version for now since it is simpler to parse and easier to verify (as well as stronger). It stipulates that the damping must be the dominant skew-adjoint term (at the principal level) for $|x|\leq 2R_0.$ Thus, the magnetic potential is permitted to have large imaginary components for $|x|\leq 2R_0,$ so long as the damping is more significant, in the sense stated above. Moreover, the $\varepsilon$ in the strongly $\varepsilon$-damping dominant condition must satisfy that $\varepsilon\lesssim (2\norm{g-m}_{L^\infty_{<R_0}}+1)^{-1}$ for our results to hold in this work under this more strict condition (in which case the weakly $\varepsilon$-damping dominant condition holds with a different $\varepsilon$, as discussed in the previous remark). This indicates that if the metric perturbation is sufficiently large within $\{|x|\leq R_0\}$, then $|\imag A|$ must be sufficiently smaller than $a$. If the perturbation is small, then $|\imag A|$ and $a$ are permitted to be close in size for $|x|\leq 2R_0$.

One may desire consideration of a so-called $\varepsilon$\textit{-weakly magnetic}, which stipulates that $\norm{(0,\imag A,0,0)}_{AF}\leq\varepsilon.$ This is a slight weakening of the more traditional $\varepsilon$-almost symmetric condition which includes $\imag V$ (present in, e.g. \cites{MST20}), since we will primarily be concerned with the interplay between $\imag A$ and $a$. Although the condition appears similar to the strongly $\varepsilon$-damping dominant condition, neither is necessarily stronger. With this being said, these conditions arise in the high frequency analysis in our work, and the particular choice of $\varepsilon$ in the $\varepsilon$-weakly magnetic condition necessary for the results to hold directly implies the $\varepsilon$-damping dominant condition with $\varepsilon\lesssim (2\norm{g-m}_{L^\infty_{<R_0}}+1)^{-1}$ (which then implies the weak condition, as well).  

\subsection{Local Energy Spaces and Estimates}\label{ee defs}
The study of localized energy estimates dates back to the work of \cites{Mora66, Mora68,Mora75}, \cites{MRS77} on Minkowski space-time. Local energy estimates constitute a powerful measure of dispersion, implying Strichartz estimates (\cites{BT07, BT08}, \cites{JSS90, JSS91}, \cites{MMT08}, \cites{MMTT10}, \cites{MT09,MT12}, \cites{Tat08}, \cites{Toh12}) and pointwise decay estimates (e.g.  \cites{MTT12}, \cites{Tat13}, \cites{Morg20}, \cites{MW21}, \cites{Looi21}, \cites{Hi23}). The latter has been used to tackle various generalizations of Price's law which, in its simplest form, conjectured a $t^{-3}$ pointwise decay rate of waves on non-rotating black hole space-times; see the listed references on pointwise decay and \cites{Hi22} (which settles the conjecture affirmatively in full generality) for more. The particular local energy estimate of interest in this work, integrated local energy decay, is a powerful quantitative statement given in the form of an inequality which may be qualitatively interpreted as expressing that the energy of a wave must disperse quickly enough to be time-integrable within any compact region of space.
    
    In order to define the relevant energy inequalities explicitly, we will first define the local energy norms \begin{align*}\norm{u}_{LE}=\sup_{j\geq 0}\norm{\langle x\rangle^{-1/2} u}_{L^2L^2\big(\R_+\times A_j\big)},\qquad\qquad
\norm{u}_{LE^1}=\norm{\partial u}_{LE}+\norm{\langle x\rangle^{-1}u}_{LE}.
\end{align*}  The predual norm to the $LE$ norm is the $LE^*$ norm, which is defined as $$ \norm{f}_{LE^*}=\sum\limits_{j=0}^\infty\norm{\langle x\rangle^{1/2} f}_{L^2L^2\big(\R_+\times A_j\big)}.$$ 
If we wish for the time interval to be e.g. $[0,T]$ in the above norms, then we will use the notation e.g. $\norm{u}_{LE[0,T]}$. 
A subscript of $c$ on any of these spaces denotes compact spatial support, whereas a subscript of $0$ denotes the closure of $C_c^\infty$ in the relevant norm. 

Analogous to the local energy spaces, we define the spatial local energy spaces $\mathcal{LE}, \mathcal{LE}^1, \mathcal{LE}^*$ when the time variable is fixed (and there is no time derivative nor integral involved in the norms, either). 
We will also require spaces which allow us to track dependence on a spectral parameter $\omega$, namely
\begin{align*}\mathcal{LE}_\omega^1=\mathcal{LE}^1\cap |\omega|^{-1}\mathcal{LE},\qquad\qquad
\dot{H}^1_\omega=\dot{H}^1\cap|\omega|^{-1}L^2.
\end{align*} These spaces are equipped with the norms 
\begin{align*}
    \norm{u}_{\mathcal{LE}_\omega^1}=\norm{u}_{\mathcal{LE}^1}+|\omega|\norm{u}_{\mathcal{LE}},\qquad\qquad
    \norm{u}_{\dot{H}_\omega^1}=\norm{u}_{\dot{H}^1}+|\omega|\norm{u}_{L^2},
\end{align*} respectively. They will become relevant in this work when we introduce the resolvent formalism in Section \ref{resolvent section}.
Now, we will define local energy decay.
\begin{definition}
We say that \textit{(integrated) local energy decay} holds for an asymptotically flat damped wave operator if \begin{equation}\label{LED}
\norm{u}_{LE^1[0,T]}+\norm{\partial u}_{L^\infty L^2[0,T]}\lesssim\norm{\partial u(0)}_{L^2}+\norm{Pu}_{LE^*+L^1L^2[0,T]},\end{equation} with the implicit constant being independent of $T$.
\end{definition}

The next estimate is a weaker variant of local energy decay which does not see complex eigenvalues nor non-zero real resonances (to be defined in Section \ref{resolvent section}).
\begin{definition} 
We say that \textit{two-point local energy decay} holds for an asymptotically flat damped wave operator if \begin{equation}\label{two point LED}
\norm{u}_{LE^1[0,T]}+\norm{\partial u}_{L^\infty L^2[0,T]}\lesssim\norm{\partial u(0)}_{L^2}+\norm{\partial u(T)}_{L^2}+\norm{Pu}_{LE^*+L^1L^2[0,T]},\end{equation} with the implicit constant being independent of $T$.
\end{definition}

The last estimate is a standard uniform energy bound, which embodies the behavior that one observes when the energy of the system is non-increasing.
\begin{definition}
We say that \textit{uniform energy bounds} hold for an asymptotically flat damped wave operator if
\begin{equation}\label{unif energy}
 \norm{\partial u}_{L^\infty L^2[0,T]}\lesssim\norm{\partial u(0)}_{L^2}+\norm{Pu}_{L^1L^2[0,T]},\end{equation} with the implicit constant being independent of $T$.   
\end{definition}
Notice that the uniform energy bounds provide a link between the two-point local energy decay estimate and the local energy decay estimate. In order to discuss the uniform estimate further, let $Pu=f$, and assume that $P$ is stationary, asymptotically flat, and of symmetric wave type. Consider the sesquilinear map $\boldsymbol{E}$ on the energy space $\mathcal{E}:=\dot{H}^1\times L^2$ defined by $$\boldsymbol{E}[\boldsymbol{u},\boldsymbol{v}](t)=\int\limits_{\R^3}P_0 u\bar{v}-g^{00}\partial_t u\overline{\partial_t v}\, dx,\qquad P_0=P\big|_{D_t=0},\qquad \boldsymbol{w}=(w,\partial_t w)\in\mathcal{E}.$$ 
This induces a quadratic energy functional, i.e. an energy form $$E[\boldsymbol{u}](t):=\boldsymbol{E}[\boldsymbol{u},\boldsymbol{u}](t).$$ When $\partial_t$ is a uniformly time-like vector field (which we will assume throughout), $P_0$ is guaranteed to be elliptic (in the principal sense). 
Direct integration by parts gives that $$\frac{d}{dt}E[\boldsymbol{u}](t)=2\real \int\limits_{\R^3}\partial_t u\bar{f}\, dx-2\real\int\limits_{\R^3}a|\partial_tu|^2\, dx.$$ 
If $a$ is real-valued and non-negative, then it follows that $$E[\boldsymbol{u}](t)\lesssim E[\boldsymbol{u}](0)+\int\limits_0^T\int\limits_{\R^3}|f \partial_t u|\, dxdt
,\qquad 0\leq t\leq T.$$ 
If $A,V\equiv 0$, then $P_0$ is \textit{uniformly} elliptic (in the sense of the \textit{full symbol}), which implies that the energy is coercive, i.e. $$E[\boldsymbol{u}](t)\approx\norm{\partial u(t)}_{L^2}^2.$$ 
Energy coercivity applied to the above now provides the uniform energy bound (\ref{unif energy}). 

When $A$ and $V$ are non-zero, one is not guaranteed energy coercivity (and thus not guaranteed uniform energy bounds), even if $a$ is non-negative. However, one does get an \textit{almost}-coercive energy statement from the ellipticity of $P_0$ and the asymptotic flatness assumption: \begin{equation}\label{almost coerc}\norm{\partial u(t)}_{L^2}^2\lesssim E[\boldsymbol{u}](t)+\norm{u(t)}_{L^2_c}^2.\end{equation} 
When $P$ is not of symmetric wave type, then one  redefines the energy by replacing $P_0$ with its symmetric part (see Section \ref{dichot section}); the estimate (\ref{almost coerc}) still holds for the energy associated to $P_0$ by ellipticity regardless of the symmetry. The uniform bound (\ref{unif energy}) still need not hold when $P_0$ is replaced by its symmetric part. Thus, it is not straightforward to transition from (\ref{two point LED}) to (\ref{LED}) when $A,V\not\equiv 0$, even for well-signed dampings;  this is true even in the symmetric wave type case. 

\subsection{Past Results}

In \cites{MST20}, the authors proved (amongst other results) that local energy decay for stationary $AF$ wave operators is equivalent to an absence of \textit{geodesic trapping}, \textit{negative eigenfunctions}, and \textit{real resonances}. We will describe trapping in Section \ref{high freq section} and the spectral objects in Section \ref{resolvent section}. In short, trapping occurs when there exist bicharacteristic rays which live within a compact set for all time. The spectral obstructions correspond to singular behavior of the \textit{resolvent} - negative eigenfunctions live in $L^2$ and have corresponding eigenvalue in the lower half-plane, whereas real resonances lie on the real line and have a corresponding resonant state which lives in a local energy space (one must distinguish between zero and non-zero resonances). In \cites{MST20}, the authors also did not necessarily possess a coercive energy; they proved local energy decay by establishing (\ref{two point LED}) using local energy estimates in different frequency regimes, then they utilized resolvent estimates.

 The work \cites{BR14} utilized dissipative Mourre commutator methods to establish that if the space-time is stationary and asymptotically \textit{Euclidean} (i.e. $(\R^4,g)$ is a product manifold and hence possesses no non-trivial metric cross terms $dt\otimes dx^j$), then one has local energy decay for $AF$ stationary damped wave operators with $A,V\equiv 0$ and $a$ being a non-negative short-range potential, provided that $a$ satisfied a dynamical hypothesis called \textit{geometric control}. This condition requires that all trapped null bicharacteristic rays intersect where $a>0$ (see Definition \ref{GCC} for a precise definition), although the authors of \cites{BR14} only required this for trapped geodesics due to the product manifold structure. Geometric control dates back to \cites{RT74}, which utilized it to obtain exponential energy decay (i.e. \textit{uniform stabilization}) for dissipative problems on compact manifolds. In \cites{Kof22}, we generalized the work of \cites{BR14} to the asymptotically flat case (i.e. allowed the metric to possess cross terms). To be precise, we will record this result, which is Theorem 1.9 in \cites{Kof22} (adding in the missing assumption of uniformly space-like time slices).
 
 \begin{Th}
Let $P$ be a stationary, asymptotically flat damped wave operator satisfying the geometric control condition with $A,V\equiv 0$ and $\supp a\subset\{|x|\leq R_0\}$, and suppose that $\partial_t$ is uniformly time-like while the constant time slices are uniformly space-like. Then, local energy decay holds, with the implicit constant in (\ref{LED}) independent of $T$.
\end{Th}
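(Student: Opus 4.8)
Because $A,V\equiv 0$ and $a\geq 0$ with $\supp a$ compact, the hypotheses here are the most favorable possible. After reduction to the normal form $g^{00}=-1$, the energy form $E[\boldsymbol{u}]$ is coercive, $E[\boldsymbol{u}](t)\approx\norm{\partial u(t)}_{L^2}^2$, and (with $Pu=f$) the energy identity $\tfrac{d}{dt}E[\boldsymbol{u}](t)=2\real\int_{\R^3}\partial_t u\,\bar f\,dx-2\int_{\R^3}a|\partial_t u|^2\,dx$ has a sign-definite damping term; after Cauchy--Schwarz and absorption this yields the uniform energy bound (\ref{unif energy}) directly. It therefore suffices to establish the two-point estimate (\ref{two point LED}), since bounding $\norm{\partial u(T)}_{L^2}$ on its right-hand side by (\ref{unif energy}) recovers (\ref{LED}). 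As $P$ is stationary, the plan is to prove (\ref{two point LED}) by Fourier transforming in $t$ and reducing to uniform-in-$\omega$ spatial local-energy (resolvent) estimates for the operator $P(\omega)$ obtained from $P$ by freezing $D_t=\omega$, then reassembling via Plancherel exactly as in the stationary framework of \cites{MST20} (summing the dyadic pieces with the slowly-varying sequence $(c_j)$).

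I would split the resolvent bound into low-, bounded-, and high-frequency regimes. For $|\omega|\leq\omega_0$, $P(\omega)$ is the short-range-perturbed wave operator $\Box_g\big|_{D_t=\omega}$ plus the compactly supported, $O(|\omega|)$ term $ia\omega$, so the low-frequency resolvent theory of \cites{MST20} applies once a zero resonance is excluded. Absence of a zero resonance follows from coercivity: a resonant state $v\in\mathcal{LE}^1$ with $P_0v=0$ satisfies, upon integration by parts (the boundary term at infinity vanishing by the decay encoded in $\mathcal{LE}^1$, and $A,V\equiv 0$ eliminating the lower-order contributions), $0=\inprod{P_0v,v}=\int_{\R^3}g^{ij}\partial_j v\,\overline{\partial_i v}\,dx\gtrsim\norm{\nabla v}_{L^2}^2$, so $v$ is locally constant and hence $v\equiv 0$ by membership in $\mathcal{LE}^1\subset\mathcal{LE}$. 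For bounded frequencies $\omega_0\leq|\omega|\leq\omega_1$, bounded away from zero, I would run the compactness-and-contradiction argument familiar from \cites{MST20}: a failure of the uniform estimate yields, in the limit, either an $L^2$ eigenfunction with $\imag\omega<0$ or a resonant state at some $\omega\in\R\setminus\{0\}$, and both are ruled out by dissipativity together with unique continuation. Indeed, pairing $P(\omega)v=0$ with $v$ and taking imaginary parts exhibits the damping contribution $|\omega|\int_{\R^3}a|v|^2\,dx$ (and, in the resonance case, a radiation flux at spatial infinity), all carrying the sign forced by energy dissipation; hence they vanish, so $v\equiv 0$ on $\supp a$, and since the principal symbol $g^{ij}\xi_i\xi_j$ of $\Box_g\big|_{D_t=\omega}$ is elliptic, unique continuation on the connected manifold $\R^3$ gives $v\equiv 0$, the desired contradiction.

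The substantive regime is the high-frequency one, $|\omega|\geq\omega_1$, where trapping genuinely obstructs a naive estimate and the geometric control condition must enter. The plan is a positive-commutator (equivalently, dissipative Mourre-type) estimate. Using a pseudodifferential multiplier built from an escape function for the null bicharacteristic flow of $g$ --- the exterior escape function of \cites{MST20} adapted to the asymptotically flat geometry so as to accommodate the metric cross terms $dt\otimes dx^j$, which is precisely where this refines the asymptotically Euclidean treatment of \cites{BR14} --- one controls $\norm{u}_{\mathcal{LE}_\omega^1}^2$ microlocally away from the trapped set $\mathcal K$, modulo an error term microlocally supported near $\mathcal K$ and forcing terms bounded by $\norm{f}_{\mathcal{LE}^*}\norm{u}_{\mathcal{LE}_\omega^1}$. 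Separately, the skew-adjoint (damping) part of $P(\omega)$ furnishes, through the stationary analogue of the energy identity, control of $u$ microlocally on a neighborhood of $\{a>0\}$ by the same forcing quantity. The geometric control condition states that every trapped null bicharacteristic meets $\{a>0\}$; since $\mathcal K$ is compact and flow-invariant, a propagation-of-singularities argument along the null bicharacteristic flow --- carried out for the genuinely Lorentzian, non-product geometry --- transfers the damping estimate into microlocal control of $u$ near $\mathcal K$, which absorbs the error term from the commutator estimate. This microlocal transfer of control from $\{a>0\}$ across the trapped set, in the asymptotically flat setting, is the crux of the matter; the remaining pieces are either soft (the compactness arguments) or routine adaptations of the undamped machinery. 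Assembling the three regimes and inverting the time Fourier transform yields (\ref{two point LED}), and combining with (\ref{unif energy}) gives (\ref{LED}), as desired.
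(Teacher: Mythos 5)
Your proposal is correct in spirit and reaches the theorem, but it takes a genuinely different route from the one recorded in \cites{Kof22} (and summarized in Section 1.3 of this paper). You share the opening move: since $A,V\equiv 0$ and $a\geq 0$ is compactly supported, the energy is coercive and the uniform energy bound~(\ref{unif energy}) follows directly from the energy identity, so it suffices to prove the two-point estimate~(\ref{two point LED}). Where you diverge is in how that estimate (or equivalently, the Schwartz-function estimate $\norm{u}_{LE^1}\lesssim\norm{Pu}_{LE^*}$) is obtained. The paper's route works on the physical-space side: one proves separate high-, medium-, and low-frequency estimates, with the high-frequency one a positive commutator argument in which geometric control is \emph{baked into the escape function construction itself} (Lemmas~\ref{semibdd}--\ref{non-trap}: the escape function near the semi-trapped set is built by flowing a cutoff along the bicharacteristics until it reaches $\{a>0\}$, so that $H_{p^\pm}q_1^\pm + C^\pm a\chi_{<R_0}$ becomes pointwise nonnegative). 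The medium-frequency step is a quantitative Carleman estimate (Theorem~\ref{med freq thm}), not a compactness argument. A time-frequency partition of unity then glues the regimes, and an extension procedure reinstates the Cauchy data at $t=0,T$. By contrast, your plan passes to the Fourier side immediately and aims for uniform resolvent bounds $\norm{R_\omega}_{\mathcal{LE}^*\to\mathcal{LE}^1_\omega}\lesssim 1$ in three regimes: you rule out a zero resonance via coercivity (equivalent to verifying the zero non-resonance condition~(\ref{zero res}), which is indeed automatic here since $P_0=D_j g^{jk}D_k$ is uniformly elliptic), handle bounded nonzero frequencies by a soft compactness-and-contradiction argument resting on dissipativity plus elliptic unique continuation, and at high frequency you construct the escape function only away from the trapped set, instead importing control near the trapped set from $\{a>0\}$ by a propagation-of-singularities / observability argument in the spirit of \cites{BR14}.

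Both routes can work, but they buy different things. The escape-function-with-geometric-control route is explicit and produces quantitative constants at every step; it also sidesteps any appeal to a unique-continuation black box (the Carleman estimates that \emph{prove} unique continuation are used directly). Your route is more modular and closer to standard semiclassical scattering machinery, but two of its ingredients are considerably less developed than you indicate. First, the propagation-of-singularities transfer of control from $\{a>0\}$ across the trapped set is exactly the point where \cites{Kof22} had to do real work: the bicharacteristic flow is genuinely Lorentzian (cross terms $g^{0j}\neq 0$), the trapping need not be normally hyperbolic or otherwise structured, and the estimate must hold uniformly in $\omega$ with the correct $\mathcal{LE}$ weights near spatial infinity. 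You correctly flag this as ``the crux of the matter,'' but no mechanism is given; this step is where a careful referee would push hardest. Second, ``inverting the time Fourier transform'' does not by itself give~(\ref{two point LED}); Plancherel gives the global-in-time Schwartz estimate, and one still needs the extension procedure (Section~\ref{rem freq section}) to get the finite-time two-point statement with the boundary energies. Neither of these is a fatal flaw --- the first has a known resolution in \cites{Kof22} and the second in \cites{MST20} --- but as written, the proposal is a plausible high-level sketch that outsources the two hardest steps to named techniques without showing they apply here.
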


In \cites{Kof22}, uniform energy bounds held as a result of the conditions on the damping and lack of potentials, which made it sufficient to prove the two-point bound in order to establish local energy decay. In order to establish the former estimate, \cites{Kof22} followed the strategy set forth in \cites{MST20}:
\begin{enumerate}
    \item Establish local energy estimates that imply local energy decay for Schwartz functions, whose corresponding function space we denote as $\mathcal{S}$, which are cut off to high, medium, and low frequency regimes.
    \item Utilize a time frequency partition of unity to prove the estimate $$\norm{u}_{LE^1}\lesssim\norm{Pu}_{LE^*},\qquad u\in\mathcal{S}.$$
    \item Apply an extension procedure to add back in the energy at times $0$ and $T$. 
\end{enumerate} 
\subsection{Statement of Present Results}\label{results section} We will generalize the work of \cites{Kof22} to include the lower-order magnetic and scalar potentials, along with more general damping functions. Our first result is an extension of the high frequency estimate \cites{Kof22} to our setting. This  high frequency estimate first arose in \cites{MST20} for non-damped waves.
\begin{Th}\label{high freq thm}
Let $P$ be a stationary, asymptotically flat damped wave operator which satisfies the geometric control  and weakly $\varepsilon$-damping dominant conditions for some $\varepsilon>0$. Additionally, assume that $\partial_t$ uniformly time-like while the constant time slices are uniformly space-like. Then, the high frequency estimate \begin{equation}\label{high freq est}\norm{u}_{LE^1[0,T]}+\norm{\partial u}_{L^\infty L^2[0,T]}\lesssim\norm{\partial u(0)}_{L^2}+\norm{\langle x\rangle^{-2}u}_{LE[0,T]}+\norm{Pu}_{L^1L^2+LE^*[0,T]}\end{equation} holds with an implicit constant which is independent of $T$.
\end{Th}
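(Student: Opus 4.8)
The plan is to follow the three-frequency-regime paradigm from \cites{MST20} and \cites{Kof22}, adapting each piece to accommodate the magnetic potential $A$ and scalar potential $V$, with the weakly $\varepsilon$-damping dominant condition playing the central role precisely at high frequency. First I would reduce to the normal form $g^{00}=-1$ using that $dt$ is uniformly time-like, and decompose $u$ (via a time-frequency partition, say dyadic in $D_t$) into high, medium, and low frequency pieces $u_{hi}$, $u_{med}$, $u_{lo}$; since $P$ is stationary, the spatial operator $P_0 = P|_{D_t=0}$ is $t$-independent and the frequency cutoffs commute with $P$ up to controllable errors, so it suffices to prove a version of (\ref{high freq est}) for each piece and sum, with the $\norm{\langle x\rangle^{-2}u}_{LE}$ term absorbing the low- and medium-frequency contributions (these are where eigenvalues/resonances would live, so at high frequency we only need to lose that weak local term, not a full loss). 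The bulk of the work is the genuinely high-frequency estimate.

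For the high-frequency piece, I would run a positive commutator / multiplier argument in the spirit of the Morawetz estimates underlying $LE^1$ bounds: construct a pseudodifferential multiplier $Q$ adapted to the bicharacteristic flow of the principal symbol $p = g^{\alpha\beta}\xi_\alpha\xi_\beta$ (the magnetic potential is lower order, so the principal symbol and its Hamilton flow are unchanged from the non-damped case), such that $\tfrac{i}{2}[P^*, Q] + \text{(damping contribution)}$ is positive modulo lower-order and compactly-supported-low-frequency terms. Away from the trapped set this is the standard $AF$ Morawetz construction of \cites{MST20}; near the trapped set one cannot get positivity from the geometry alone, so one invokes geometric control: every trapped null bicharacteristic meets $\{a>0\}$, and there one gains positivity from the damping term $iaD_t$. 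The new subtlety is that the skew-adjoint part of $P$ is not just $iaD_t$ but also carries the imaginary part of $A$ and the cross terms; the contribution of the skew-adjoint part to the commutator/energy identity, computed at the principal level near the characteristic set, is exactly the quantity appearing on the left side of the weakly $\varepsilon$-damping dominant inequality (this is the content of Remark \ref{weak strong damp}, where $b^\pm$ are the two roots in $\xi_0$ of $p=0$). So that condition is precisely what guarantees the damping-induced positivity near trapping survives the perturbation by $\imag A$ and the metric cross terms for $|x|\le 2R_0$; for $|x|>2R_0$ the $AF$ growth conditions on $A$ and $a$ make these terms short-range and they are handled as perturbative errors.

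The main obstacle I expect is this interface between the geometric (trapping) part of the multiplier estimate and the damping-positivity part, once $A\neq 0$: one must track the skew-adjoint terms carefully through the symbol calculus and the integration-by-parts identity, verify that all the error terms generated (commutators of cutoffs, lower-order terms from $A$ and $V$, the $g^{00}+1$ normalization error) are either bounded by $\epsilon \norm{u}_{LE^1}$ (absorbable) or by $\norm{\langle x\rangle^{-2}u}_{LE}$ or $\norm{Pu}_{LE^*+L^1L^2}$, and that the positivity extracted near the trapped set via geometric control genuinely dominates after using the weakly $\varepsilon$-damping dominant bound with its fixed $\varepsilon>0$. A secondary technical point is that one does not have a coercive energy when $A,V\neq 0$, so the $\norm{\partial u}_{L^\infty L^2}$ term on the left must be recovered from the energy form $E[\boldsymbol u]$ together with the almost-coercivity estimate (\ref{almost coerc}), feeding the resulting $\norm{u}_{L^2_c}^2$ back into the weak local term on the right. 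Finally, I would assemble the three regimes: summing the high-frequency estimate with routine elliptic estimates for $u_{med}$ and $u_{lo}$ (which lose the $\langle x\rangle^{-2}u$ local term by construction) yields (\ref{high freq est}) in full, with the implicit constant independent of $T$ since every step is $T$-uniform.
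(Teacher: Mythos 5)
Your proposal captures the overall architecture of the paper's argument: a positive commutator estimate built from a pseudodifferential escape function adapted to the bicharacteristic flow, with the geometric control condition providing positivity near the trapped set via the damping, and the weakly $\varepsilon$-damping dominant condition governing the interface between the damping and the skew-adjoint contribution of $\imag A$ at the principal level (you correctly identify that the quantity appearing there is exactly the one in the definition via the half-wave factorization $p=-(\tau-b^+)(\tau-b^-)$). The paper also first reduces, much as you anticipate, to a compactly supported, zero-data, spatial-frequency-truncated estimate on $v_{>\lambda}$; your ``three-regime'' framing is organized a bit differently (the paper uses a single frequency threshold $\lambda$ rather than a dyadic $D_t$ partition, and the $\langle x\rangle^{-2}u$ term is simply the localization loss from adding back low frequencies — it has nothing to do with eigenvalues or resonances at this stage), but that is presentational rather than a genuine divergence.

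The substantive omission is the $\gamma$-scaling device. The paper rescales $u\mapsto u(\gamma t,\gamma x)$ to place a large parameter $\gamma$ in front of the damping \emph{and} the skew-adjoint magnetic terms, then builds the escape function as $q^\pm_{j,>\lambda}=e^{-\sigma_j q_j^\pm}\chi_{|b^\pm|>\lambda}$ with a hierarchy $1\ll\sigma_1,\ \sigma_1\varepsilon^{-1}\ll\gamma\ll\sigma_2$. This hierarchy is what lets the damping term (amplified by $\gamma$) dominate the $H_{p^\pm}q_1^\pm$ contribution from the semi-bounded escape function near trapping, while the $\sigma_2$-bootstrap absorbs the $\gamma$-sized perturbations in the non-trapped and exterior regions. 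Your proposal asserts that ``the damping-induced positivity near trapping survives the perturbation by $\imag A$'' because the weakly $\varepsilon$-damping dominant condition holds, but that condition alone only compares $a$ and $\imag A$ pointwise; it does not by itself show that the \emph{negative} part of $H_{p^\pm}q_1^\pm$ (which has a size set by the geometry, not by $a$) can be dominated. It is precisely the freedom to take $\gamma\gg\sigma_1\varepsilon^{-1}$ that closes this loop — and it is only safe to take $\gamma$ large because the $\varepsilon$-damping dominance ensures the $\gamma$-amplified $\imag A$ terms stay subordinate to the $\gamma$-amplified damping. Without articulating this scaling and the resulting constant hierarchy, the key positivity step near trapping is not justified. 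I would also flag that the paper needs two separate exponentiation parameters $\sigma_1,\sigma_2$ (one per escape function) precisely because of the new unsigned first-order errors from $A$; this is a small but real structural change from the $A,V\equiv 0$ case and worth tracking explicitly.
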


Notice that if we are in the setting of \cites{Kof22} then $A,V, a\chi_{|x|>R_0}\equiv 0$, which implies the weakly $\varepsilon$-damping dominant condition holds. Hence, this theorem is a strengthening of the corresponding result in \cites{Kof22}.

This theorem is one of the primary results whose proof must be adapted from \cites{Kof22} to account for the lower-order terms. It is also where one requires the most substantial deviation from \cites{MST20}, since trapping is high frequency. We utilize the symmetry-based assumptions in the theorem so that we may deal with the additional lower-order terms. In particular, this is where the conditions on the interaction between the damping and magnetic potentials come into play.  In order to leverage the sign of the damping to mitigate the harmful effects of the trapping, we must limit the magnetic potential appropriately.

The medium and low frequencies are not affected by the damping (which may simply be viewed as a general sub-principal $AF$ term, as opposed to being leveraged like in the high frequency setting) and follow directly from the work in \cites{MST20}. This allows us to establish the two-point local energy estimate under the hypothesis that zero is not a resonance, which is needed in the low frequency regime.
\begin{Th}\label{two point thm}
Let $P$ be a stationary, asymptotically flat damped wave operator which satisfies the zero non-resonance, geometric control, and weakly $\varepsilon$-damping dominant conditions for some $\varepsilon>0$. Additionally, assume that $\partial_t$ is uniformly time-like while the constant time slices are uniformly space-like. Then, two-point local energy decay holds, with the implicit constant in (\ref{two point LED}) independent of $T$.
\end{Th}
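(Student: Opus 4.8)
The plan is to run the three-step frequency-localized scheme of \cites{MST20}, exactly as in \cites{Kof22}, but with the high frequency input now supplied by Theorem \ref{high freq thm} in place of its weaker predecessor. First I would reduce $P$ to the normal form $g^{00}=-1$; this is permitted because $dt$ is uniformly time-like, and since $(g,A,a,V)$ are stationary the reduced operator is again stationary and asymptotically flat with the same qualitative hypotheses (geometric control, zero non-resonance, and weakly $\varepsilon$-damping dominant are all preserved). It then suffices to establish the stationary local energy estimate
\[
\norm{u}_{LE^1[0,T]}\lesssim\norm{Pu}_{LE^*+L^1L^2[0,T]}
\]
for $u\in\mathcal{S}$ supported in time in $[0,T]$, modulo energy contributions at $t=0,T$ that will be reinstated at the end; the uniform-energy term $\norm{\partial u}_{L^\infty L^2}$ on the left of (\ref{two point LED}) then follows from a standard energy-flux computation once the $LE^1$ bound is in hand.

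Next I would decompose $u$ via a partition of unity in the time-dual frequency $\tau$ into high ($|\tau|\gtrsim\Lambda$), medium ($\lambda\le|\tau|\le\Lambda$), and low ($|\tau|\lesssim\lambda$) pieces, with $0<\lambda\ll 1\ll\Lambda$ depending only on the $AF$ parameters. For the high-frequency piece, Theorem \ref{high freq thm} applies directly: geometric control together with the weakly $\varepsilon$-damping dominant condition gives (\ref{high freq est}), and the auxiliary term $\norm{\langle x\rangle^{-2}u}_{LE}$ is a genuine gain in spatial weight that gets reabsorbed after summing over dyadic pieces. For the medium- and low-frequency pieces the damping is no longer leveraged dynamically; it enters only as a stationary first-order $AF$ perturbation, so the corresponding estimates of \cites{MST20} transfer essentially verbatim. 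The low-frequency estimate is where the zero non-resonance hypothesis is consumed, while the medium-frequency estimate is a resolvent/elliptic-type bound uniform away from $\tau=0$ and $\tau=\infty$ that does not detect complex eigenvalues or nonzero real resonances — which is precisely why those obstructions are absent from (\ref{two point LED}).

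I would then recombine the pieces: the commutators of the time-frequency cutoffs with $P$ are lower order, and using the slowly-varying sequence $(c_j)$ to control spatial tails they are absorbed into the left-hand side by a bootstrap/summation over the dyadic-in-$\tau$ scales. This yields the stationary estimate above on $\mathcal{S}$. Finally, I would apply the extension procedure of \cites{MST20,Kof22}: given $u$ on $[0,T]$ with $Pu=f$, one extends it to a function on $\R$ solving $Pu=\tilde f$ with $\norm{\tilde f}_{LE^*+L^1L^2}\lesssim\norm{f}_{LE^*+L^1L^2[0,T]}+\norm{\partial u(0)}_{L^2}+\norm{\partial u(T)}_{L^2}$, applies the stationary estimate to the extension, and restricts back to $[0,T]$; absent a coercive energy one cannot do better than keeping both endpoint energies, which is exactly the two-point estimate (\ref{two point LED}).

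The main obstacle I anticipate is the recombination step at the matching frequency $|\tau|\approx\Lambda$: one must check that the output of Theorem \ref{high freq thm}, carrying the nonstandard weighted error $\norm{\langle x\rangle^{-2}u}_{LE}$, glues cleanly to the \cites{MST20} medium-frequency estimate without the damping — which is treated differently on the two sides of the cutoff — producing an uncontrolled commutator. Verifying that $a$ acts as a benign sub-principal $AF$ term at medium frequency, so that the \cites{MST20} machinery is perturbed only by harmless errors, and that the $\langle x\rangle^{-2}$ gain genuinely suffices for reabsorption after the dyadic summation, is the technical heart of the argument; the remaining steps are a transcription of the established scheme, with the potentials absorbed as lower-order $AF$ perturbations throughout.
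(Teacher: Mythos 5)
Your proposal follows essentially the same route as the paper: a tripartite time-frequency decomposition with Theorem~\ref{high freq thm} supplying the high-frequency piece, the \cites{MST20} low- and medium-frequency estimates (Theorems~\ref{low freq thm} and~\ref{med freq thm}) for the other two pieces, and then the extension procedure of \cites{MST20} to reinstate the endpoint energies. The one point worth flagging is that the recombination is cleaner than you anticipate—stationarity of $P$ makes it commute exactly with the time-frequency cutoffs $Q_j$, so there are no commutator errors to control at the matching frequency, and the $\langle x\rangle^{-2}$-weighted error from (\ref{high freq est}) is reabsorbed via Plancherel in time (which produces a $\tau_1^{-1}$ factor on the high-frequency piece, small for $\tau_1$ large), not by dyadic spatial summation.
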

We will define the zero non-resonance condition in Section \ref{rem freq section} and its relation to zero resonant states in Section \ref{resolvent section}. If we additionally impose that $P$ satisfies the $\varepsilon$-damping dominant condition with $|\Im A|$ replaced by $|A|, |\nabla A|,$ and $|V|$, then we obtain a straightforward energy dichotomy (just as in \cites{MST20}, where it is Theorem 2.16) as a consequence of the two-point local energy estimate and a uniform energy relation. The given symmetry conditions fulfill a similar role to a condition on the absence of non-zero embedded resonances, and it is needed to obtain the aforementioned uniform energy relation (i.e. an almost-conserved energy property if $Pu=0$) which will appear in the proof. Recall the definition of $\mathcal{E}$ given in Section \ref{ee defs}.
 \begin{Th}\label{energy dich}
 Let $P$ be a stationary, asymptotically flat damped wave operator which is strongly $\varepsilon$-damping dominant with respect to the magnetic potential, the gradient of the magnetic potential, and the scalar potential for sufficiently small $\varepsilon\ll_{R_0, M_0} 1$, equivalently $$a(x)\geq\varepsilon^{-1}(|A(x)|+|\nabla A(x)|+|V(x)|),\qquad |x|\leq 2R_0,$$
 and satisfies the zero non-resonance and geometric control conditions. Additionally, assume that $\partial_t$ is uniformly time-like while the constant time slices are uniformly space-like.
 Then, there exists an $\alpha>0$ so that any solution to $$Pu=f,\qquad u[0]\in\mathcal{E},\qquad f\in LE^*+L^1_tL^2_x$$ satisfies one of the following two properties:
 \begin{enumerate}
 \item Exponential growth asymptotics in terms of the data and forcing: $$\norm{\partial u(t)}_{L^2}\gtrsim e^{\alpha t}\left(\norm{\partial u(0)}_{L^2}+\norm{f}_{LE^*+L^1L^2[0,\infty)}\right),\qquad t\gg 1.$$
     \item Local energy decay: $$\norm{u}_{LE^1[0,\infty)}+\norm{\partial u}_{L^\infty L^2[0,\infty)}\lesssim\norm{\partial u(0)}_{L^2}+\norm{f}_{LE^*+L^1L^2[0,\infty)}.$$ 
 \end{enumerate}
 \end{Th}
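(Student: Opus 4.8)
The plan is to deduce Theorem \ref{energy dich} from Theorem \ref{two point thm} (two-point local energy decay) together with a uniform energy relation that the symmetry hypothesis guarantees, following the dichotomy argument of \cites{MST20}. First I would set up the modified energy: since $P$ need not be of symmetric wave type, replace $P_0$ by its symmetric part $P_0^{\mathrm{sym}}$ in the definition of $\boldsymbol{E}$ and $E[\boldsymbol{u}]$, so that the almost-coercivity estimate (\ref{almost coerc}), $\norm{\partial u(t)}_{L^2}^2\lesssim E[\boldsymbol{u}](t)+\norm{u(t)}_{L^2_c}^2$, continues to hold by ellipticity of $P_0$. Next, differentiating $E[\boldsymbol{u}](t)$ along a solution of $Pu=f$ and integrating by parts produces $\tfrac{d}{dt}E[\boldsymbol{u}](t) = 2\real\int \partial_t u\,\bar f\,dx - 2\real\int a|\partial_t u|^2\,dx + (\text{error terms})$, where the error terms come from the skew-adjoint pieces that were discarded in passing to $P_0^{\mathrm{sym}}$ — precisely the magnetic and scalar potential contributions supported (up to $AF$-decaying tails) in $\{|x|\le 2R_0\}$. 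The strong $\varepsilon$-damping dominance with respect to $A$, $\nabla A$, and $V$ is exactly what is needed to absorb these error terms into the good damping term $-2\real\int a|\partial_t u|^2$, yielding a one-sided differential inequality $\tfrac{d}{dt}E[\boldsymbol{u}](t) \le 2\real\int\partial_t u\,\bar f\,dx$ (possibly after also controlling a $\norm{u(t)}_{L^2_c}^2$ remainder), for $\varepsilon\ll_{R_0,M_0}1$.

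With this almost-conserved energy in hand, the dichotomy is obtained by a bootstrap/continuation argument. Define $\mathcal{E}(t):=\norm{\partial u(t)}_{L^2} + \norm{f}_{LE^*+L^1L^2[t,\infty)}$ (or an equivalent functional). Using the two-point estimate (\ref{two point LED}) on dyadic time intervals $[2^k, 2^{k+1}]$ together with the energy monotonicity-up-to-forcing just established, one shows a trichotomy at the level of the growth rate of $\norm{\partial u(t)}_{L^2}$: either the energy is eventually controlled by the forcing, in which case summing the two-point estimates over all dyadic blocks and using (\ref{almost coerc}) gives alternative (2) (local energy decay on $[0,\infty)$); or the energy grows, and the two-point estimate forces the local energy term $\norm{u}_{LE^1[0,T]}$ to be dominated by $\norm{\partial u(T)}_{L^2}$, which when fed back into the energy identity yields a lower bound of the form $E[\boldsymbol{u}](2T)\ge (1+\delta)E[\boldsymbol{u}](T)$ for some fixed $\delta>0$ once $\norm{\partial u(T)}_{L^2}$ dominates both the data and the forcing — iterating this across dyadic scales produces the exponential growth $\norm{\partial u(t)}_{L^2}\gtrsim e^{\alpha t}(\cdots)$ of alternative (1) for a suitable $\alpha>0$. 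The key point is that exactly one of ``$\norm{\partial u(T)}_{L^2}$ is absorbed into the data-plus-forcing'' and ``it dominates'' can hold in the limit, and stationarity lets one run the argument with $T$-independent constants.

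I would carry out the steps in this order: (i) define the symmetrized energy form and record (\ref{almost coerc}); (ii) derive the energy identity/inequality for $Pu=f$ and use strong $\varepsilon$-damping dominance (in $A$, $\nabla A$, $V$) to get the one-sided bound, tracking the compactly-supported $L^2$ remainder; (iii) invoke Theorem \ref{two point thm} to get (\ref{two point LED}) under the zero non-resonance and geometric control hypotheses; (iv) combine (ii) and (iii) in a dyadic iteration to split into exponential growth versus summable decay; (v) in the decay case, sum up and apply (\ref{almost coerc}) plus the uniform bound to conclude (2), and in the growth case conclude (1). The main obstacle I anticipate is step (ii): making the absorption of the potential error terms into the damping rigorous and $\varepsilon$-quantified requires care, because the error terms involve $\partial_t u$ paired against $u$ (not $\partial_t u$ against $\partial_t u$), so one must use a Cauchy--Schwarz/Poincaré trade-off on $\{|x|\le 2R_0\}$, pay with a harmless $\norm{u(t)}_{L^2_c}^2$ term, and verify that this remainder is in turn controlled by the local energy norm $\norm{u}_{LE^1}$ already appearing on the left of (\ref{two point LED}) — thereby closing the loop. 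A secondary subtlety is ensuring the exponents and constants in the dyadic iteration are genuinely uniform in $T$ and that the two limiting alternatives are exhaustive and mutually exclusive, which is where the precise bookkeeping of the forcing norm $\norm{f}_{LE^*+L^1L^2}$ on shrinking time-tails $[t,\infty)$ matters.
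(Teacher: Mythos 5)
Your steps (i)–(iii) match the paper's proof closely: symmetrize the energy form (replacing $P_0$ by $P_0^s$), retain almost-coercivity \eqref{almost coerc}, use the strong $\varepsilon$-damping dominance together with H\"older/Young/asymptotic flatness to absorb the skew-adjoint contributions, and then combine with the two-point estimate \eqref{two point LED}. One small imprecision: the paper's absorption produces a space-time remainder $D(\varepsilon,\textbf{c})\norm{u}_{LE^1[0,T]}^2$ with $D(\varepsilon,\textbf{c})=\mathcal{O}(\max(\varepsilon^k,\textbf{c}))$, not a pointwise-in-time $\norm{u(t)}_{L^2_c}^2$ term; the only pointwise remainder is $\norm{\partial^{\leq 1}u(T)}_{L^2_c}^2$, which comes from \eqref{almost coerc} at $t=T$ and is then re-estimated through \eqref{two point LED}. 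Both $\varepsilon$ and $\textbf{c}$ need to be small so that the $LE^1$ remainder absorbs into the left side of \eqref{two point LED}.

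Steps (iv)–(v) are where you diverge, and where there is a genuine gap. The paper does not do a dyadic-block iteration. Its key move is to set $E(T)=\norm{\partial u}_{L^2L^2[0,T]}^2$, a nondecreasing \emph{cumulative} quantity with $E'(T)=\norm{\partial u(T)}_{L^2}^2$, and to rewrite the combined estimate \eqref{dich eqn 1} as a first-order differential inequality \eqref{dich eqn 2}: $E'(T)\geq\alpha E(T)-(T+1)\bigl(\norm{\partial u(0)}_{L^2}^2+\norm{f}^2\bigr)$. The dichotomy is then exactly whether $E(T)$ ever exceeds the threshold $2\alpha^{-1}(T+1)(\cdots)$; if it never does, the mean value theorem for integrals produces a sequence $T_j\to\infty$ along which $\norm{\partial u(T_j)}_{L^2}$ is controlled by data and forcing, and passing this back through \eqref{two point LED} gives local energy decay; if it does at some $T'$, an integrating factor gives $E(T)\gtrsim e^{\alpha T}(\cdots)$ for $T\geq T'$, and \eqref{dich eqn 1} transfers that growth to $\norm{\partial u(T)}_{L^2}$. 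Your proposed claim $E[\boldsymbol{u}](2T)\geq(1+\delta)E[\boldsymbol{u}](T)$ ``once $\norm{\partial u(T)}_{L^2}$ dominates'' is not supplied by the energy identity, which is a near-conservation/dissipation law and by itself pushes the energy downward, not upward; the growth mechanism in the paper comes entirely from reading the two-point estimate as a lower bound on $E'(T)$ in terms of $E(T)$. Without that reformulation your dyadic scheme has no engine to produce the multiplicative gain, so this step as written would not close.
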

  \begin{remark}
    The condition on the damping can be readily weakened to assuming that $P$ is weakly $\varepsilon$-damping dominant (in order for Theorem \ref{two point thm} to apply) and that, for $\varepsilon$ small enough and all $t>0$, the estimate
\begin{align*}\int\limits_0^t\int\limits_{\R^3} \partial_s u\overline{P^au}\, dxds
   =C(\varepsilon)\int\limits_0^t\int\limits_{B_{2R_0}(0)} a|\partial_s u|^2\, dxds \nonumber +\mathcal{O}(\max(\varepsilon^k, \textbf{c}))\norm{u}_{LE^1}^2
  \end{align*} holds for some $C(\varepsilon)>0$ and $k\in (0,1)$, where $P^a$ is the the skew-adjoint part of $P$. The assumption given in the dichotomy was stated as such merely for tractability, as it will both satisfy the relevant hypothesis in Theorem \ref{two point thm} and allows the above bound to hold as a consequence of H\"older's inequality, Young's inequality for products, and asymptotic flatness.
  \end{remark}

  At least at a heuristic level, the solutions which exhibit case (1) behavior stem from eigenvalues in the lower half-plane of the corresponding stationary problem (see Section \ref{resolvent section}), which represent poles of the resolvent.  The resolvent has meromorphic continuation to the entire lower half-plane, and the poles must occur within a relatively compact subset of frequencies. In particular, there are only finitely many such eigenvalues, and each generalized eigenspace has finite dimension by Fredholm theory. We also remark that we do not obtain improvements from the corresponding result in \cites{MST20} here, nor do we obtain versions of their non-stationary results since our high frequency work exploits the stationarity. While more refined energy space decompositions are likely (and are also present in \cites{MST20} within the non-damped setting), the proper statements and results in the context of damped waves are not clear to us at this time.

Finally, we have local energy decay. Here, we must further assume that $P$ satisfies various spectral hypotheses, which are defined in Section \ref{resolvent section}.
\begin{Th}\label{LED thm}
Let $P$ be a stationary, asymptotically flat damped wave operator which satisfies the zero non-resonance, geometric control, and weakly $\varepsilon$-damping dominant conditions for some $\varepsilon>0$.  Suppose further that $P$ has no negative eigenfunctions nor real resonances and that $\partial_t$ is uniformly time-like while the constant time slices are uniformly space-like. Then, local energy decay
holds, with the implicit constant in (\ref{LED}) independent of $T$.
\end{Th}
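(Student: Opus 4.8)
The plan is to combine the two-point local energy decay estimate from Theorem \ref{two point thm} with resolvent estimates in order to upgrade \eqref{two point LED} to the full local energy decay estimate \eqref{LED}, following the template of \cites{MST20} but accounting for the damping and potentials. Since $P$ satisfies the zero non-resonance, geometric control, and weakly $\varepsilon$-damping dominant conditions, Theorem \ref{two point thm} already gives us \eqref{two point LED}; the entire content of the present theorem is the removal of the $\norm{\partial u(T)}_{L^2}$ term on the right-hand side, which is precisely where the spectral hypotheses (no negative eigenfunctions, no real resonances) enter. First I would pass to the stationary picture via the Fourier transform in time, writing the resolvent $R(\omega) = (P_\omega)^{-1}$ where $P_\omega$ is the stationary operator obtained by substituting $D_t \mapsto \omega$. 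The absence of negative eigenfunctions means $R(\omega)$ has no poles in the open lower half-plane $\{\Im\omega < 0\}$, and the absence of real resonances (together with the already-assumed zero non-resonance) means $R(\omega)$ extends continuously, with suitable local energy bounds $\norm{R(\omega) f}_{\mathcal{LE}^1_\omega} \lesssim \norm{f}_{\mathcal{LE}^*}$, up to and including the real axis. The high-frequency portion of this bound is governed by Theorem \ref{high freq thm} (after conjugating to the stationary setting), the medium frequencies by a compactness/Fredholm argument using the non-resonance and no-eigenvalue hypotheses, and the low frequencies by the zero non-resonance condition exactly as in \cites{MST20}.

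The key steps, in order, would be: (i) record the conversion between the time-dependent estimate \eqref{LED} and a uniform-in-$\omega$ resolvent estimate on $\{\Im\omega \le 0\}$, via Plancherel in $t$ as in \cites{MST20}; (ii) establish the resolvent bound for $\Im\omega < 0$ by a limiting/continuity argument — here one uses that for $\Im\omega < 0$ the operator $P_\omega$ is, modulo lower-order and compactly supported terms, a nice elliptic-type operator, so Fredholm theory plus the no-eigenvalue hypothesis gives invertibility, and the high/medium/low frequency estimates give the uniform bound; (iii) extend the bound to $\Im\omega = 0$ using the no-real-resonance and zero non-resonance hypotheses together with the limiting absorption principle, again splitting into frequency regimes and invoking Theorem \ref{high freq thm} for $|\omega|$ large; (iv) transfer back: given $u$ solving $Pu = f$ on $[0,T]$ with data $\partial u(0)$, extend $f$ and the Cauchy data to a global-in-time problem (the extension procedure, step (3) in the \cites{Kof22} scheme) so that the solution agrees with $u$ on $[0,T]$, apply the resolvent estimate to the extended solution, and restrict. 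The extension must be done so that the $L^\infty L^2$ energy term and the $LE^1$ term are both controlled, which is where one uses that \eqref{two point LED} already holds — the two-point estimate is what makes the truncation-and-extension argument close without losing the endpoint energies.

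The main obstacle I expect is step (ii)–(iii): establishing the uniform resolvent bounds all the way down to the real axis in the presence of the magnetic and scalar potentials and the damping. The damping term $i a D_t$ becomes $i a \omega$ in the stationary picture, which is neither symmetric nor compactly supported in general, so the usual self-adjoint spectral theory does not apply directly; one must check that the Fredholm/meromorphic-continuation framework of \cites{MST20} is robust enough to accommodate $P_\omega$ with these extra terms — this should follow because $A, V$, and the exterior part of $a$ are genuine $AF$ perturbations (short-range), so they are relatively compact and do not disturb the essential spectrum or the continuation, while the interior part of $a$ is handled by the damping-dominant hypothesis through Theorem \ref{high freq thm}. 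A secondary subtlety is bookkeeping at medium frequencies: one must verify that "no negative eigenfunctions and no real resonances" genuinely rules out \emph{all} spurious poles of $R(\omega)$ in a compact neighborhood of the real axis in the closed lower half-plane, including possible embedded eigenvalues on the real axis — here one invokes that a real-axis pole of the resolvent with a local-energy resonant state is, by definition, a real resonance (or the zero resonance), so the hypotheses exclude exactly these. Once the uniform resolvent bound on $\{\Im \omega \le 0\}$ is in hand, the passage back to \eqref{LED} via the extension argument is essentially the same as in \cites{Kof22} and \cites{MST20}, modulo the almost-coercivity subtlety \eqref{almost coerc}, which is absorbed by the $\norm{\langle x\rangle^{-2} u}_{LE}$-type terms appearing in the high-frequency estimate and then removed using the already-established control of $\norm{u}_{LE^1}$.
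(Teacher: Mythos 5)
Your proposal follows essentially the same route as the paper: invoke the equivalence between local energy decay and the uniform local energy resolvent bound on the closed lower half-plane (the paper's Theorem \ref{energy res equiv}, which is your step (i)), use absence of negative eigenfunctions for holomorphy and boundedness of $R_\omega$ away from $\R$, and then close the bound in a strip near $\R$ by splitting into high ($|\omega|\gg 1$, via Theorem \ref{high freq thm}), low ($|\omega|\ll 1$, via zero non-resonance and Theorem \ref{zero res equiv}), and medium ($|\omega|\approx 1$, via no real resonances and Theorem \ref{res equiv}) regimes. One small imprecision: your step (iv) re-describes the passage from the resolvent bound back to \eqref{LED} as a separate ``truncation-and-extension'' argument leaning on \eqref{two point LED}, but in the paper this passage is already contained in Theorem \ref{energy res equiv} (your step (i)) via Plancherel; the extension procedure you cite is what upgrades Theorem \ref{uncond LED} to Theorem \ref{two point thm}, not what converts the resolvent bound into \eqref{LED}. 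This redundancy does not affect the validity of your overall argument.
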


In order to prove this result, we cannot necessarily rely on uniform energy bounds to pass from the two-point local energy estimate to local energy decay like in \cites{Kof22} and instead rely on spectral theory as in \cites{MST20}. The structure of proving Theorem \ref{LED thm}, and hence of the overall paper, is as follows and is motivated by \cites{MST20}:
\begin{enumerate}
    \item Section \ref{high freq section}: Extend the high frequency analysis from \cites{Kof22} to the present context. One must take care to consider how the damping interacts with the remaining principal skew-adjoint terms; this is the purpose of the $\varepsilon$-damping dominant conditions. We will explain the results that we borrow and why they apply here.
    \item Section \ref{rem freq section}: Provide an overview of the low and medium frequency analysis; these do not require change from \cites{MST20}, since the damping is simply treated as a first-order $AF$ perturbation term. From here, Theorem \ref{two point thm} follows readily from the work in \cites{MST20}, \cites{Kof22}. As an immediate consequence, we will establish Theorem \ref{energy dich} in Section \ref{dichot section}, although this has no bearing on the proof of Theorem \ref{LED thm}.
    \item  Section \ref{resolvent section}:  Summarize the necessary resolvent theory in \cites{MST20} required to prove Theorem \ref{LED thm}. This will require Theorem \ref{high freq thm} and Theorem \ref{two point thm}. Once one is armed with the relevant frequency estimates, the work in \cites{MST20} applies rather directly. We will summarize and/or provide many (but not all) of their arguments for the required results, especially where we believe that further elucidation would be beneficial for the sake of exposition. We require little deviation from their theory in our present work.
\end{enumerate}

\begin{remark}
As a consequence of Remark \ref{weak strong damp}, Theorems \ref{high freq thm}, \ref{two point thm}, and \ref{LED thm} hold if the weakly $\varepsilon$-damping dominant condition is replaced by the strongly $\varepsilon$-damping dominant condition with $\varepsilon\lesssim(2\norm{g-m}_{L^\infty}+1)^{-1}$.
\end{remark}
\begin{remark}
As opposed to writing the d'Alembertian in divergence form (i.e. $\Box_g=D_\alpha g^{\alpha\beta}D_\beta$) and utilizing the volume form $dV=dxdt$ for our analysis, one  could work with the geometric d'Alembertian (that is, the Laplace-Beltrami form) $$\widetilde{\Box}_g=|g|^{-1/2}D_\alpha |g|^{1/2}g^{\alpha\beta}D_\beta,\qquad |g|=|\det g^{\alpha\beta}|$$ with the volume form $dV=|g|^{1/2}dxdt.$ %
Each d'Alembertian is symmetric with respect to the associated volume form on $L^2(dV)$. One can transition from the latter framework to the former by conjugating the operator by $|g|^{1/4}$ (see e.g. \cites{Tat13}, \cites{Morg20}); lower-order terms arise, but they are permissible in view of the magnetic and scalar potential terms already allowable in $P$. For this reason, we are working with the former case.
\end{remark}
\noindent\textbf{Acknowledgments.}
The author would like to thank Jason Metcalfe for helpful discussions. He would also like to express his gratitude to the anonymous reviewers for their feedback.
\vskip .1in
\noindent\textbf{Declarations.}
The author reports there are no competing interests to declare.
\section{Frequency Analyses and Two-Point Local Energy Decay}
In this section, we will prove Theorem \ref{two point thm} using high, low, and medium frequency analyses. The low and medium frequency work follows directly from that in \cites{MST20}, while the high frequency work is a variation on \cites{Kof22} and requires the symmetry assumptions as described in Section \ref{Background}.

To start, we will define cutoff notation which we will use throughout the duration of the paper. Namely, we will fix
 $\chi\in C_c^\infty\text{ non-increasing},\ \chi\equiv 1\text{ for } |x|\leq 1,\ \chi\equiv 0\text{ for } |x|> 2$ and define  $\chi_{< R}(|x|)=\chi(|x|/R), $ $  \chi_{>R}=1-\chi_{< R}.$ We will assume further that $\chi$ is the square of a smooth function for notational convenience (as otherwise, we could work with $\chi^2$).
We will occasionally add the variable into the subscript to make the specific dependence clear (e.g. write $\chi_{|\xi|<\lambda}$ when working truncating the spatial frequencies $\xi$ below a threshold $\lambda$).
\subsection{High Frequency Analysis}\label{high freq section}
We will recall the relevant framework and results from \cites{Kof22} needed to prove the high frequency estimate. When deviation occurs, we will proceed carefully and explicitly. The need for the $\varepsilon$-damping dominant condition only comes up in one place, in the proof of Lemma \ref{escape}. 
Throughout this section, we will assume that $P$ is stationary. This work is motivated by \cites{Kof22}, \cites{MST20}, and \cites{BR14}.

The high frequency analysis is rooted in the behavior of the bicharacteristic flow generated by the principal symbol of $P$. First, we make a minor simplification. Since the constant time slices are assumed uniformly space-like, it follows that $g^{00}\lesssim -1$. Dividing through by $-g^{00}$ preserves the assumptions on the operator coefficients (see e.g. \cites{MT12}); hence, we may assume that $g^{00}=-1$.  

After these modifications, the principal symbol of $P$ is the smooth function $$p(x,\tau, \xi)=-(\tau^2-2\tau g^{0j}(x)\xi_j-g^{ij}(x)\xi_i\xi_j),\qquad (t,x,\tau, \xi)\in T^*\R^4\setminus o.$$ Notice that since $P$ is stationary, $p$ is independent of $t$. This symbol generates a bicharacteristic flow on $\R\times T^*\R^4$ given by  $\varphi_s(w)=\left(t_s(w),x_s (w),\tau_s(w),\xi_s(w)\right)$
which solves  
$$\left\{
\arraycolsep=1pt
\def\arraystretch{1.2}
\begin{array}{rlrl}
\dot{t}_s&=\partial_\tau p(\varphi_s(w)),& \qquad
\dot{\tau}_s&=-\partial_t p(\varphi_s(w)),\\
\dot{x}_s&=\nabla_\xi p(\varphi_s(w)),& \qquad
\dot{\xi}_s&=-\nabla_x p(\varphi_s(w)),
\end{array}\right.$$
with initial data $w\in T^*\R^4.$ Since $g$ is smooth and asymptotically flat, and $\partial_t$ is uniformly time-like, we have a unique, smooth, globally-defined flow with smooth dependence on the data.  We will have particular interest in \textit{null} bicharacteristics, i.e. those with initial data lying in the zero set of $p$, denoted $\operatorname{Char}(P)$. We remark that there is no distinguishing between $\pm p$ on $\operatorname{Char}(P)$, hence the minus sign in front of $p$ is somewhat inconsequential on it.

Using the flow $\varphi_s$, we define the \textit{forward} and \textit{backward trapped} and \textit{non-trapped} sets with respect to $\varphi_s$, respectively, as
\begin{align*}
\Gamma_{tr}&=\left\lbrace w\in T^*\R^4\setminus o:
\sup_{s\geq 0} |x_{ s}(w)|<\infty\right\rbrace \cap \operatorname{Char}(P),\\
\Lambda_{tr}&=\left\lbrace w\in T^*\R^4\setminus o:
\sup_{s\geq 0} |x_{- s}(w)|<\infty\right\rbrace \cap \operatorname{Char}(P),\\
\Gamma_{\infty}&=\left\lbrace w\in T^*\R^4\setminus o:
 |x_{s}(w)|\rightarrow\infty\operatorname{ as }s\rightarrow\infty\right\rbrace \cap \operatorname{Char}(P),\\
\Lambda_{\infty}&=\left\lbrace w\in T^*\R^4\setminus o:
 |x_{-s}(w)|\rightarrow\infty \operatorname{ as }s\rightarrow\infty\right\rbrace \cap \operatorname{Char}(P).
\end{align*} 
The \textit{trapped} and \textit{non-trapped} sets are defined as 
$$\Omega^p_{tr}=\Gamma_{tr}\cap\Lambda_{tr}\qand\Omega^p_{\infty}=\Gamma_{\infty}\cap\Lambda_{\infty},$$ respectively. The flow is said to be  \textit{non-trapping} if $\Omega^p_{tr}{\,=\,}\emptyset$. Otherwise, the flow is said to possess trapping.
Now, we may precisely state the geometric control condition.
\begin{definition}
    We say that the geometric control condition holds if \begin{align}\label{GCC}
    (\forall w\in \Omega^p_{tr})(\exists s\in\R)\ \ a(x_s(w))>0.
    \end{align}
\end{definition}
In order to leverage the sign of the damping, \cites{Kof22} utilized a scaling argument which we will also exploit. Namely, if $u$ solves $Pu=f$, we consider  $$\tilde{u}(t,x):=u(\gamma t,\gamma x),\qquad\gamma>0.$$ For this discussion, a tilde over a function will denote dilation by $\gamma$ in each coordinate, as done to define $\tilde{u}$. If we call $$\tilde{P}=(D_\alpha+\gamma\tilde{A}_\alpha) \tilde{g}^{\alpha\beta}(D_\beta+\gamma\tilde{A}_\beta)+i\gamma \tilde{a} D_t+\gamma^2 \tilde{V},$$ then  $$\tilde{P} (\gamma^{-2}\tilde{u})=\tilde{f}\qquad\text{ if and only if }\qquad  Pu=f.$$ The benefit of this scaling is that we obtain an arbitrarily large constant in front of the damping function. However, we underscore that such a large constant is also inherited by the magnetic potential.

Analogous Hamiltonian systems and trapped/non-trapped sets exist for the principal symbol $\tilde{p}$ of $\tilde{P}$, which amounts to simply dilating the coordinates of $g$. If we assume that geometric control holds for the flow generated by $p$, then it is proven in \cites{Kof22} that it holds for the flow generated by $\tilde{p}$. Since the proof is straightforward, we will omit it here and only record the result, which is Proposition 2.6 in \cites{Kof22}.
\begin{prop}
Assume that the geometric control condition (\ref{GCC}) holds. Then, for any $\gamma>0$, (\ref{GCC}) holds for the flow generated by $\tilde{p}$, with $a$ replaced by $\tilde{a}$.
\end{prop}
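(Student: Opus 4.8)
The plan is to exhibit an explicit scaling correspondence between the bicharacteristic flow of $p$ and that of $\tilde p$, and then transport the geometric control condition through it. The starting observation is that passing from $g$ to $\tilde g$ is simply a dilation of the metric coefficients, so $\tilde p(x,\tau,\xi)=p(\gamma x,\tau,\xi)$; in particular $(x,\tau,\xi)\in\operatorname{Char}(\tilde P)$ if and only if $(\gamma x,\tau,\xi)\in\operatorname{Char}(P)$.

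Next, I would verify by direct differentiation of Hamilton's equations that if $(T_\sigma,X_\sigma,\mathrm{T}_\sigma,\Xi_\sigma)$ denotes the $p$-bicharacteristic with data $(\gamma t_0,\gamma x_0,\tau_0,\xi_0)$, then
\[
s\longmapsto\left(\gamma^{-1}T_{\gamma s},\ \gamma^{-1}X_{\gamma s},\ \mathrm{T}_{\gamma s},\ \Xi_{\gamma s}\right)
\]
solves the Hamiltonian system for $\tilde p$ with data $(t_0,x_0,\tau_0,\xi_0)$; uniqueness of solutions (valid since both flows are smooth and globally defined by asymptotic flatness and $\partial_t$ uniformly time-like) then forces this to equal $\varphi_s^{\tilde p}(t_0,x_0,\tau_0,\xi_0)$. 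The $\gamma$-powers balance because $\nabla_x$ picks up a factor of $\gamma$ from the chain rule while $\partial_\tau$ and $\nabla_\xi$ do not, and this is exactly compensated by the spatial rescaling together with the time reparametrization $s\mapsto\gamma s$.

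From this correspondence, since $\gamma>0$ the conditions $\sup_{s\geq 0}|x_s^{\tilde p}(w)|<\infty$ and $\sup_{\sigma\geq 0}|X_\sigma|<\infty$ are equivalent (and likewise for $s\leq 0$), so, combined with the characteristic set correspondence, one gets $w=(x_0,\tau_0,\xi_0)\in\Omega^{\tilde p}_{tr}$ if and only if $(\gamma x_0,\tau_0,\xi_0)\in\Omega^p_{tr}$. For the transfer: given $w\in\Omega^{\tilde p}_{tr}$, geometric control (\ref{GCC}) for $p$ applied to $(\gamma x_0,\tau_0,\xi_0)$ produces $\sigma\in\R$ with $a(X_\sigma)>0$; taking $s=\sigma/\gamma$ and using $\tilde a(y)=a(\gamma y)$ together with $x_s^{\tilde p}(w)=\gamma^{-1}X_\sigma$ yields $\tilde a(x_s^{\tilde p}(w))=a(X_\sigma)>0$, which is precisely (\ref{GCC}) for $\tilde p$ with $a$ replaced by $\tilde a$. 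There is no genuine obstacle here; the only care required is bookkeeping the two simultaneous rescalings (spatial dilation by $\gamma$ and time reparametrization by $\gamma$) and confirming that global smooth solvability legitimizes the uniqueness step.
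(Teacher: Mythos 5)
Your proof is correct and follows the natural scaling argument that the paper (citing \cite{Kof22}) has in mind: express $\tilde p(x,\tau,\xi)=p(\gamma x,\tau,\xi)$, push the Hamiltonian flow through the spatial dilation together with the compensating time reparametrization $s\mapsto\gamma s$ to identify $x^{\tilde p}_s(x_0,\tau_0,\xi_0)=\gamma^{-1}x^{p}_{\gamma s}(\gamma x_0,\tau_0,\xi_0)$, deduce $\Omega^{\tilde p}_{tr}=\{(x,\tau,\xi):(\gamma x,\tau,\xi)\in\Omega^p_{tr}\}$, and then transfer the positivity of $a$ along the flow via $\tilde a(y)=a(\gamma y)$. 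The computation of the $\gamma$-powers (the extra factor of $\gamma$ from $\nabla_x\tilde p$ absorbed by the spatial rescaling and the time reparametrization) is carried out correctly, and the appeal to global uniqueness of the flow is the right justification. This matches the approach the paper indicates is used in \cite{Kof22}.
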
 
Since the lower-order terms $A$ and $V$ are not at the principal level, they do not affect $p$ nor $\tilde{p}$. Henceforth, we will fix a large $\gamma>0$ and study the problem from the scaled perspective while reverting back to our original notation (e.g. no tildes).  It is readily seen that it is equivalent to prove Theorem \ref{high freq thm} for the scaled problem, where we now have a large constant in front of the damping term. 

The proof of the version of Theorem \ref{high freq thm} present in \cites{Kof22} (i.e. $A,V\equiv 0,\ \supp a\subset \{|x|\leq R_0\}$) is a positive commutator argument. At the symbolic  level, this requires the construction of an escape function and a lower-order correction term. Let $p$ and $s_{skew}$ represent the principal symbols of the self and skew-adjoint parts of $P$, respectively. Namely,
\begin{align*}
    {p}(x,\tau,\xi)&=-(\tau^2-2\tau {g}^{0j}(x)\xi_j-{g}^{ij}(x)\xi_i\xi_j)\\
    {s}_{skew}(x,\tau,\xi)&=i\gamma\left(\imag A_\alpha(x) (g^{\alpha 0}(x)\tau+g^{\alpha k}(x)\xi_k)+ \tau \real a(x)\right).
\end{align*} The multiplication by $\gamma$ in $s_{skew}$ will prove advantageous for a bootstrapping argument, which is precisely why we implement the $\gamma$-scaling. However, the imaginary part of $A$ has interaction with the damping and also features multiplication by $\gamma$. The $\varepsilon$-damping dominant condition is applied in order to retain the positivity effects of the damping. 

The precise escape function construction is as follows.
\begin{Lemma}\label{escape}
For all $\lambda>1$, there exist symbols $q_j\in S^j(T^*\R^3)$ and $m\in S^0(T^*\R^3)$, all supported in $|\xi|\geq\lambda$, so that $$ H_{{p}}q-2i{s}_{skew}q+{p}m\gtrsim \chi_{|\xi|>\lambda}\inprod{x}^{-2}\left(\tau^2+|\xi|^2\right),$$ where $q=\tau q_0+q_1$. \end{Lemma}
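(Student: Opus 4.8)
The plan is to build the escape function $q$ and the correction term $m$ by combining a large-scale radial escape function in the asymptotically flat region with a microlocal escape function near the trapped set, exploiting the geometric control condition to absorb the trapped directions using the damping. First I would recall the structure from \cites{Kof22}, \cites{MST20}: away from the trapped set, the null bicharacteristic flow is non-trapping, so on any compact piece of $\operatorname{Char}(P)\setminus\Omega^p_{tr}$ one can construct (via a standard averaging-along-the-flow or pseudoconvexity argument) a function whose Hamilton derivative $H_p q$ is positive in the relevant conic region; in the far region $|x|\gtrsim R_0$, the smallness parameter $\textbf{c}$ makes the metric a small perturbation of Minkowski, for which the explicit radial multiplier of Morawetz type gives $H_p q \gtrsim \inprod{x}^{-2}(\tau^2+|\xi|^2)$ modulo terms controlled by $\textbf{c}$ and absorbable into the left side. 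I would patch these together with a partition of unity on $\operatorname{Char}(P)$, frequency-localized to $|\xi|\geq\lambda$ by a cutoff $\chi_{|\xi|>\lambda}$, and off-characteristic contributions (where $|p|\gtrsim \tau^2+|\xi|^2$) would be handled by choosing $m$ appropriately so that the $pm$ term supplies the needed positivity there.

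The genuinely new ingredient relative to \cites{Kof22} is the presence of $s_{skew}$, which now contains the $\imag A_\alpha$ terms in addition to $\tau\, \real a$; the key algebraic point is that $q = \tau q_0 + q_1$ is chosen so that, on $\operatorname{Char}(P)$, the quantity $\tau$ factors through the two roots $b^\pm(x,\xi)$ of $p$ in $\tau$, and then $-2i s_{skew} q$ evaluated on each characteristic sheet produces exactly the combination
\[
\frac{b^\pm}{b^\pm-b^\mp}(a+g^{0\alpha}\imag A_\alpha) + \frac{\xi_k}{b^\pm - b^\mp}g^{k\alpha}\imag A_\alpha
\]
appearing in the weakly $\varepsilon$-damping dominant condition (up to the positive factor from $q_0$), which is $\geq \varepsilon a$ there. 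Thus on the support of the damping the skew term contributes a nonnegative multiple of $a(\tau^2+|\xi|^2)$, and geometric control guarantees that every trapped ray meets $\{a>0\}$, so integrating the positivity of $a$ along the flow upgrades the pointwise-degenerate lower bound near $\Omega^p_{tr}$ to the clean bound $\chi_{|\xi|>\lambda}\inprod{x}^{-2}(\tau^2+|\xi|^2)$. The $\gamma$-scaling is what makes this work quantitatively: the large constant $\gamma$ in front of $a$ (and $\imag A$) lets the damping contribution dominate the commutator error terms coming from $H_p q$ near trapping, while the $\varepsilon$-damping dominance prevents the simultaneously-amplified $\imag A$ from spoiling the sign.

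The order of steps I would carry out: (1) fix $\gamma$ large and reduce to the scaled problem; (2) construct the exterior/non-trapping escape function with the Morawetz lower bound modulo $\textbf{c}$-small and $p$-controlled errors; (3) construct the microlocal escape function near $\Omega^p_{tr}$ adapted to the flow, using geometric control so its Hamilton derivative is nonnegative and strictly positive where $a>0$; (4) verify the sign identity relating $-2i s_{skew}q$ on $\operatorname{Char}(P)$ to the weakly $\varepsilon$-damping dominant expression, so that this term is $\gtrsim \varepsilon a(\tau^2+|\xi|^2)$ there; (5) patch everything with a conic partition of unity, choose $m$ to handle the off-characteristic region and the transition zones, check the symbol classes $q_j \in S^j$, $m\in S^0$ and the support condition $|\xi|\geq\lambda$; (6) assemble the final inequality, absorbing all $\textbf{c}$-small and $\gamma$-subdominant errors. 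The main obstacle is step (3)–(4): ensuring that the microlocal construction near the trapped set is compatible with the skew term, i.e. that the amplified $\imag A$ does not overwhelm the damping-driven positivity there. This is exactly where the weakly $\varepsilon$-damping dominant hypothesis is indispensable, and making the bookkeeping of $\gamma$-powers versus $\varepsilon$ versus $\textbf{c}$ consistent — so that the damping beats the magnetic error but both are controlled against the trapping commutator error — will require the most care.
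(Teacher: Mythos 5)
Your proposal follows essentially the same route as the paper: the half-wave factoring $p=-(\tau-b^+)(\tau-b^-)$ with escape symbol $q=(\tau-b^+)q^-+(\tau-b^-)q^+$; separate escape-function constructions near the semi-trapped set (geometric control plus the damping, giving positivity of $H_{p^\pm}q_1^\pm + C^\pm a\chi_{<R_0}$) and on the non-trapping complement (a Morawetz-type multiplier in the exterior); the algebraic identification of the $-2is_{skew}q$ contribution on each sheet $\tau=b^\pm$ with exactly the weakly $\varepsilon$-damping dominant expression; and the choice of $m$ by minimizing the resulting $\tau$-quadratic off the characteristic set. The one point you flag as the main obstacle but leave unresolved — the $\gamma$ versus $\varepsilon$ versus $\textbf{c}$ bookkeeping — is handled in the paper by taking $q^\pm_{j,>\lambda}=e^{-\sigma_j q_j^\pm}\chi_{|b^\pm|>\lambda}$ with \emph{two} separate large parameters $\sigma_1,\sigma_2$ (so that $H_{p^\pm}$ applied to the exponential carries an explicit factor $\sigma_j$) and arranging the ordering $1\ll\sigma_1,\ \sigma_1\varepsilon^{-1}\ll\gamma\ll\sigma_2$; the need for two independent bootstrap parameters rather than one, specifically to absorb the unsigned $\gamma$-amplified magnetic errors, is the genuinely new wrinkle beyond \cites{Kof22} and would be worth making explicit in your step (6).
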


Here, $H_p$ is the Hamiltonian vector field induced by $p$, and $S^m(T^*\R^n)$ denotes the standard Kohn-Nirenberg symbol class of order $m$. 
To each symbol $b\in S^m(T^*\R^n)$, we have the associated \textit{Weyl quantization} of $b$, denoted $b^{\operatorname{w}}$, which is a pseudodifferential operator of order $m$ defined by the action
$$b^{\operatorname{w}}(x,D)u(x)=(2\pi)^{-3}\int\limits_{\R^n}\int\limits_{\R^n}e^{i(x-y)\cdot\xi}\ b\left(\frac{x+y}{2},\xi\right)u(y)\, dyd\xi,\qquad u\in\mathcal{S}(\R^n).$$ We will use $\Psi^m(\R^n)$ to denote the space of pseudodifferential operators on $\R^n$ of order $m$, and write $$\Psi^{-\infty}(\R^n):=\bigcap\limits_{m\in\R}\Psi^m(\R^n)$$ for the space of smoothing operators on $\R^n.$ 

Lemma \ref{escape} was proven in \cites{Kof22} in the special case of $A,V\equiv 0$ and $\supp a\subseteq\{|x|\leq R_0\}$, leading to a simplified $s_{skew}$. As in the aforementioned work, we will work with the half-wave decomposition; it is proven in \cites{Kof22} that the null bicharacterstics are equivalent through a reparameterization argument, and this fact continues to hold here without any change (lower-order terms do not affect $p$ and hence will not affect its induced bicharacteristic flow). That is, we factor $p$ as
$$p(\tau, x,\xi)=-(\tau-b^+(x,\xi))(\tau-b^-(x,\xi)),$$
where $$b^{\pm}(x,\xi)={g^{0j}(x) \xi_j\pm\sqrt{[g^{0j}(x)\xi_j]^2+g^{ij}(x)\xi_i\xi_j}}.$$ Using that $\partial_t$ is uniformly time-like, it is readily seen that $b^\pm$ are both positively homogeneous of degree 1 in $\xi$, and $$b^+(x,\xi)>0>b^-(x,\xi)$$ whenever $\xi\neq 0$. The Hamiltonians $p^\pm:=\tau-b^\pm$ also generate bicharacteristic flows  
$$\varphi_s^\pm(w){\,=\,}\left(t_s^\pm(w), x_s^\pm (w),\tau_s^\pm(w),\xi_s^\pm(w)\right)$$ on $\R\times T^*\R^4$ 
which solve the Hamiltonian systems 
$$\left\{
\arraycolsep=1pt
\def\arraystretch{1.2}
\begin{array}{rlrl}
\dot{t}_s^\pm&=1, &
\dot{\tau}_s^\pm&=0,\\
\dot{x}_s^\pm&=-\nabla_\xi b^{\pm}(\varphi_s^\pm(w)),\qquad &
\dot{\xi}_s^\pm&= \nabla_x b^{\pm}(\varphi_s^\pm(w)),
\end{array}\right.$$  with initial data $w\in T^*\R^4.$
Observe that the $(t,\tau)$ and $(x,\xi)$ systems are decoupled, allowing us to project onto the $(x,\xi)$ components of the flow without losing information. Notice that, after we project, we are no longer looking at null bicharacteristics but, rather, bicharacteristics with initial data having non-zero $\xi$ component. 

Now, we may define the forward and backward (denoted by the $\pm$ notation) trapped and non-trapped sets for the half-wave flows as
\begin{align*}
\Gamma_{tr}^{\pm}&=\left\lbrace w\in T^*\R^3\setminus o:
\sup_{s\geq 0} |x^\pm_{ s}(w)|<\infty\right\rbrace,\\
\Lambda_{tr}^{\pm}&=\left\lbrace w\in T^*\R^3\setminus o:
\sup_{s\geq 0} |x^\pm_{- s}( w)|<\infty\right\rbrace,\\
\Gamma_{\infty}^{\pm}&=\left\lbrace w\in T^*\R^3\setminus o:
|x^\pm_{s}(w)|\rightarrow\infty\ \operatorname{ as }\ s\rightarrow\infty\right\rbrace,\\
\Lambda_{\infty}^{\pm}&=\left\lbrace w\in T^*\R^3\setminus o:
|x^\pm_{-s}(w)|\rightarrow\infty\ \operatorname{ as }\ s\rightarrow\infty\right\rbrace.
\end{align*}
The trapped and non-trapped sets are
\begin{align*}
    {\Omega}_{tr}^{\pm}=\Gamma_{tr}^\pm\cap \Lambda_{tr}^\pm&\qand  {\Omega}_{\infty}^\pm=\Gamma_{\infty}^\pm\cap \Lambda_{\infty}^\pm,\\
        {\Omega}_{tr}={\Omega}_{tr}^+\cup {\Omega}_{tr}^-
   &\qand
    {\Omega}_{\infty}={\Omega}_{\infty}^+\cup {\Omega}_{\infty}^-.
\end{align*}
As a consequence of the factoring, we have the identities
$$   {\Omega}_{tr}=\Pi_{x,\xi}(\Omega^p_{tr})\qand {\Omega}_{\infty}=\Pi_{x,\xi}(\Omega^p_{\infty}),$$ where $\Pi_{x,\xi}(t,x,\tau,\xi)=(x,\xi).$  Additionally, we may re-state geometric control in terms of the factored flow.  If $w\in\Omega_{tr}$, then it is either trapped with respect the flow generated by $p^+$ or $p^-$. If it is trapped with respect to $p^+,$ then there is a time so that $w$ is flowed along a $p^+$-bicharacteristic ray to a point where the damping is positive, and similarly if it is trapped with respect to $p^-$.

Since $A$ and $V$ do not occur at the principal level for $P$, they will not affect the individual components of the escape function construction. Hence, the construction in \cites{Kof22} applies directly. The methodology in \cites{Kof22}, motivated by a combination of \cites{BR14}, \cites{MST20}, and \cites{MMT08}, is performed in the following steps:
\begin{enumerate}
    \item \textbf{On the characteristic set.} We will refer to $\{|x|\leq R_0\}$ as the \textit{interior} and $\{|x|>R_0\}$ as the \textit{exterior}.
    \begin{enumerate}
        \item \textbf{Interior, semi-bounded null bicharacteristics.} Here, one considers  semi-bounded null bicharacteristics with initial data living in the interior region. Working with semi-bounded trajectories is favorable since they include both trajectories that are trapped and those which escape slowly. Additionally, geometric control extends to such trajectories. This is where geometric control is used. 
        \item \textbf{The remainder of the interior region.} In this region, all of the trajectories escape both forward and backward in time. This region is more classical, but care must be taken both to avoid the trapping and incorporate the half-wave structure.
        \item \textbf{The exterior region.} As a consequence of asymptotic flatness, there are no trapped trajectories in this region. Here, one appeals to geometrically-adapted flat wave theory. The multiplier also allows for the absorption of an error term which arises in the prior region.
    \end{enumerate}
    \item \textbf{On the elliptic set.} Here, one requires a lower-order symbol which provides no contribution on the characteristic set and provides positivity off of it. This essentially follows from the minimization of an appropriate quadratic in the dual time variable $\tau$.
\end{enumerate}

Now, we cite the specific results from \cites{Kof22} (namely, Lemmas 2.13 and 2.16, respectively). First, we define the \textit{interior, semi-trapped set} $$\Omega_{R_0}^\pm :=\left(\Gamma_{tr}^\pm\cup\Lambda_{tr}^\pm\right) \cap\{|x|\leq R_0\}.$$ This is related to step (1a). Next, we define the function $$\Phi^\pm(x,\xi)=\left(x,\frac{\xi}{|b^\pm(x,\xi)|}\right).$$ It can be shown directly that
$$\left\{
\begin{aligned}
    x^\pm_{s}(x,\xi)&=x^\pm_s(x,\lambda \xi),\\
   \lambda \xi^\pm_{ s}(x,\xi)&=\xi^\pm_s(x,\lambda \xi)
\end{aligned}\right.$$
for any $\lambda>0$  due to the homogeneity of $b^\pm$. Since $|\xi/b^\pm(x,\xi)|\approx 1$ for $\xi\neq 0$ and $b^\pm$ is a constant of motion for the flow generated by $p^\pm$ (which explicitly utilizes that $g$ is stationary), the function $\Phi^\pm$ provides a lifting which is useful to pair with scaling arguments. The first portion of the construction (1a) is contained in the following lemma, which was motivated by \cites{BR14}. This is where geometric control is utilized. (More precisely, geometric control also applies to semi-trapped trajectories; see \cites{Kof22}.) 
\begin{Lemma}[Semi-bounded Escape Function Construction]\label{semibdd}
There exist $q_1^{\pm}\in C^\infty(T^*\R^3\setminus o)$, an open set $V_{R_0}^{\pm}\supset \Omega_{R_0}^\pm$, and $C^\pm\in\R_+$ so that $$H_{p^\pm}q_1^\pm+C^\pm a{\chi_{<R_0}}\gtrsim_{R_0} \mathbbm{1}_{V_{R_0}^\pm}.$$ Further, $q_1^\pm=\tilde{q}_1^\pm\circ \Phi^\pm,$ where $\tilde{q}_1^\pm\in C_c^\infty(T^*\R^3\setminus o)$. 
\end{Lemma}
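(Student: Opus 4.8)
The plan is to build the semi-bounded escape function $q_1^\pm$ by a dynamical averaging construction adapted to the half-wave flow generated by $p^\pm$, following the strategy of \cites{BR14} but lifted via $\Phi^\pm$ so that it is compatible with the $\gamma$-scaling and the spatial localization to $|x| \leq R_0$. First I would work on the factored (projected) phase space $T^*\R^3 \setminus o$, where the relevant flow is the $(x,\xi)$-component of $\varphi_s^\pm$. The key geometric input is that geometric control holds not only for the trapped set but for the interior semi-trapped set $\Omega_{R_0}^\pm \subseteq \Gamma_{tr}^\pm \cup \Lambda_{tr}^\pm$ intersected with $\{|x| \leq R_0\}$ — this is exactly the extension recorded in \cites{Kof22}. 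Concretely: for each $w \in \Omega_{R_0}^\pm$ there is a time $s(w)$ with $a(x_{s(w)}^\pm(w)) > 0$; by continuity of the flow and smoothness of $a$, there is a neighborhood of $w$ and a uniform sub-interval of times over which $a \circ x_s^\pm$ stays bounded below by a positive constant. A compactness argument on $\Omega_{R_0}^\pm$ (it is closed and, because trajectories with initial data in a compact set that remain semi-bounded live in a fixed compact set of phase space once we use the $\Phi^\pm$-normalization $|\xi/b^\pm| \approx 1$, it is contained in a compact set) yields a finite uniform time window $[-S,S]$ and $c_0 > 0$ such that every $w$ in an open neighborhood $V_{R_0}^\pm$ of $\Omega_{R_0}^\pm$ satisfies $\int_{-S}^{S} a(x_s^\pm(w))\, ds \geq c_0$.

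Next I would define $q_1^\pm$ as a smoothed, cutoff time-average along the flow: morally,
\[
q_1^\pm(w) = -\int_{-S}^{S} \theta(s)\, \psi(x_s^\pm(w))\, a(x_s^\pm(w))\, ds,
\]
for a suitable weight $\theta$ and a spatial cutoff $\psi$ supported near $\{|x| \leq R_0\}$ (so that $\chi_{<R_0} \equiv 1$ on its support), chosen so that differentiating along $H_{p^\pm}$ telescopes. The defining feature is that $H_{p^\pm} q_1^\pm$ picks up boundary terms plus a commutator with $\psi$; choosing $\theta$ piecewise linear (or a mollification thereof) arranges $H_{p^\pm} q_1^\pm = \frac{1}{2S}\int_{-S}^S (\psi a)(x_s^\pm(w))\, ds + (\text{error localized where the trajectory has left } \{|x|\leq R_0\})$. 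On $V_{R_0}^\pm$ the averaged term is $\gtrsim_{R_0} c_0/S$, giving the desired lower bound $\gtrsim_{R_0} \mathbbm{1}_{V_{R_0}^\pm}$ after absorbing a large multiple $C^\pm a \chi_{<R_0}$ to kill the sign-indefinite error terms (which are themselves controlled by $a$ on $\{|x|\leq R_0\}$ by construction, and vanish once the trajectory exits, where $\psi$ is supported). Writing $q_1^\pm = \tilde q_1^\pm \circ \Phi^\pm$ is then automatic from the homogeneity relations $x_s^\pm(x,\xi) = x_s^\pm(x,\lambda\xi)$ and $\lambda\xi_s^\pm(x,\xi)=\xi_s^\pm(x,\lambda\xi)$: the integrand depends on $\xi$ only through the normalized direction, and the normalization $|\xi/b^\pm| \approx 1$ confines things to a compact shell, so $\tilde q_1^\pm \in C_c^\infty(T^*\R^3 \setminus o)$. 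Smoothness of $q_1^\pm$ follows from smooth dependence of the flow on initial data.

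The main obstacle I anticipate is \emph{not} the averaging identity itself but controlling the geometry of the semi-trapped set carefully enough to get a genuinely \emph{uniform} time window $S$ and a genuinely \emph{open} neighborhood $V_{R_0}^\pm$ on which the lower bound holds — the subtlety is that $\Omega_{R_0}^\pm$ mixes forward-trapped and backward-trapped trajectories (it is a union $\Gamma_{tr}^\pm \cup \Lambda_{tr}^\pm$, not an intersection), so a trajectory in it may escape in one time direction, and one must check that the averaging window can be taken symmetric and finite while still capturing a damping contribution, and that the escaping tail genuinely leaves the support of $\psi$ before the window closes. This is precisely the point where the $\Phi^\pm$-lifting and the fact that $b^\pm$ is a constant of motion (using stationarity of $g$) are essential: they compactify the problem so that "escape" happens at a uniform rate. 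Since this construction is carried out in detail in \cites{Kof22} (Lemma 2.13) and the lower-order terms $A, V$ do not enter $p^\pm$ at all, I would ultimately cite that lemma directly; the above is the proof I would reconstruct if needed.
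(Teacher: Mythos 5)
The paper gives no proof of this lemma: it cites \cites{Kof22} (Lemma 2.13) directly, justified solely by the observation that the new lower-order terms $A$ and $V$ do not enter the principal symbol $p^\pm$ and hence cannot change the escape-function construction. You reach exactly the same conclusion for the same reason, so your ultimate approach coincides with the paper's; the flow-averaging sketch you supply is supplementary (and consistent with the \cites{BR14}-style construction underlying the cited lemma, including the correct identification of the role of $\Phi^\pm$ and the union structure of $\Omega_{R_0}^\pm$), but the paper itself defers entirely to the reference.
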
 

Next, we complete steps (1b) and (1c).
\begin{Lemma}[Non-trapped Escape Function Construction]\label{non-trap}
There exist $q_2^\pm\in C^\infty(T^*\R^3\setminus o)$ and $W^\pm\subset \Omega_\infty^\pm$ 
so that $V_{R_0}^\pm\cup W^\pm=T^*\R^3\setminus o,\ W^\pm\supset\{|x|>R_0\}$ and $$H_{p^\pm}q_2^\pm
\gtrsim c_j2^{-j}\mathbbm{1}_{W^\pm}, \qquad |x|\approx 2^{j}.$$ Further, $q_2^\pm= q_{in}^\pm+q_{out}^\pm,$ where $q_{in}^\pm=\tilde{q}_{in}^\pm\circ \Phi^\pm$, with $\tilde{q}_{in}^\pm\in C^\infty(T^*\R^3\setminus o)$ being supported in $\{|x|\leq 4R\}$ and $q_{out}^\pm\in S^0_{\operatorname{hom}}(T^*\R^3\setminus o)$.  
\end{Lemma}
We remark that the behavior of $(c_j)$ does not matter so much for $0\leq j<\log_2 R_0,$ as long as each corresponding $c_j$ is positive by compactness 

To complete the remaining steps and prove Lemma \ref{escape}, we proceed similarly to the work in \cites{Kof22}, with special attention paid to the new contributions of $s_{skew}.$ The presence of $\imag A$ in $s_{skew}$ did not occur in \cites{Kof22} and must be dealt with here. The symmetry conditions arise when one must balance the ability to leverage the sign on the damping for $|x|\leq 2R_0$ with the necessity to absorb the unsigned magnetic terms.
\begin{proof}[Proof of Lemma \ref{escape}]
First, we truncate the symbols to stay away from $\xi=0$:
 $$q^\pm_{j,>\lambda}=e^{-\sigma_j q_j^\pm}\chi_{|b^\pm|>\lambda},\ j=1,2,$$
  where $\sigma_1,\ \sigma_2\gg 1$. Unlike \cites{Kof22}, we need two parameters $\sigma_1$ and $\sigma_2$, as opposed to just one parameter; the additional parameter is needed to deal with unsigned first-order errors. 
The exponentiation is implemented for bootstrapping: Taking derivatives of the exponentials will provide multiplication by $\sigma_1$ and $ \sigma_2$. It is readily seen that $q^\pm_{j,>\lambda}\in S^0(T^*\R^3)$ via the chain rule.

We combine the symbols constructed on the individual light cones together as $$q(x,\tau,\xi)=(\tau-b^+)(q^-_{1,>\lambda}+q^-_{2,>\lambda})+(\tau-b^-)(q^+_{1,>\lambda}+q^+_{2,>\lambda}).$$ 
Calling  $$q_j=(\tau-b^+)q^-_{j,>\lambda}+(\tau-b^-)q^+_{j,>\lambda},$$ we can see that  
\begin{align*}
    &\left(H_p q+2\gamma\left(\tau \real a+\imag A_\alpha (g^{\alpha 0}\tau+g^{\alpha k}\xi_k)\right)q\right)\big|_{\tau=b^\pm}\\
    &=\left(H_p q_1+2\gamma\left(\tau \real a+\imag A_\alpha (g^{\alpha 0}\tau+g^{\alpha k}\xi_k)\right)q_1\right)\big|_{\tau=b^\pm}+\left(H_p q_2+2\gamma\left(\tau \real a+\imag A_\alpha (g^{\alpha 0}\tau+g^{\alpha k}\xi_k)\right)q_2\right)\big|_{\tau=b^\pm}\\
      &=H_p q_1 \big|_{\tau=b^\pm}\pm 2\gamma  (b^+-b^-)\left(b^\pm(\real a+\imag A_\alpha g^{\alpha 0})+\imag A_\alpha g^{\alpha k}\xi_k|_{\tau=b^{\pm}}\right)q_{1,>\lambda}^\pm\\
         &\qquad+H_p q_2 \big|_{\tau=b^\pm}\pm 2\gamma  (b^+-b^-)\left(b^\pm(\real a+\imag A_\alpha g^{\alpha 0})+\imag A_\alpha g^{\alpha k}\xi_k|_{\tau=b^{\pm}}\right)q_{2,>\lambda}^\pm.
\end{align*} We will work with each term in the last equality separately. First, we compute that
\begin{align*}
    H_p q_j\big|_{\tau=b^\pm}&=-(b^+-b^-)^2H_{p^\pm} q_{j,>\lambda}^\pm-(b^\pm-b^\mp)q_{j,>\lambda}^\pm(b_{\xi_j}^\pm b_{x_j}^\mp-b^\pm_{x_j}b^\mp_{\xi_j})\\
    &=\sigma_j (b^+-b^-)^2 q_{j,>\lambda}^\pm H_{p^\pm} q_j^\pm-(b^\pm-b^\mp)q_{j,>\lambda}^\pm(b_{\xi_j}^\pm b_{x_j}^\mp-b^\pm_{x_j}b^\mp_{\xi_j}).
\end{align*} By making $\sigma_1,\sigma_2$ sufficiently large, we may absorb error terms into the leading term with differing $j$ to obtain that
\begin{align*}
   H_p q_1\big|_{\tau=b^\pm}+H_p q_2\big|_{\tau=b^\pm}&\geq \frac{1}{2}\sigma_1(b^+-b^-)^2q_{1,>\lambda}^\pm H_{p^\pm} q_1^\pm+\frac{1}{2}\sigma_2(b^+-b^-)^2q_{2,>\lambda}^\pm H_{p^\pm} q_2^\pm,
\end{align*}
and so
\begin{align}
\label{light cone ineq}
    &\left(H_p q+2\gamma\left(\tau \real a+\imag A_\alpha (g^{\alpha 0}\tau+g^{\alpha k}\xi_k)\right)q\right)\big|_{\tau=b^\pm}\\
      &\gtrsim \sigma_1(b^+-b^-)^2q_{1,>\lambda}^\pm H_{p^\pm} q_1^\pm\pm 2\gamma  (b^+-b^-)\left(b^\pm(\real a+\imag A_\alpha g^{\alpha 0})+\imag A_\alpha g^{\alpha k}\xi_k|_{\tau=b^{\pm}}\right)q_{1,>\lambda}^\pm\nonumber\\
         &\qquad+\sigma_2(b^+-b^-)^2q_{2,>\lambda}^\pm H_{p^\pm} q_2^\pm\pm 2\gamma  (b^+-b^-)\left(b^\pm(\real a+\imag A_\alpha g^{\alpha 0})+\imag A_\alpha g^{\alpha k}\xi_k|_{\tau=b^{\pm}}\right)q_{2,>\lambda}^\pm.\nonumber
\end{align} 
We will consider each line in the above lower bound of (\ref{light cone ineq}) separately. First, observe that $$\frac{b^\pm}{b^\pm- b^\mp}\approx 1.$$

We first consider the terms in (\ref{light cone ineq}) featuring $q_{1, >\lambda}^\pm.$ Here, we want to leverage the damping via Lemma \ref{semibdd}. Splitting $\mathbb{R}^3$ into interior and exterior regions  permits one to use the weakly $\varepsilon$-damping dominant condition in the former and asymptotic flatness conditions in the latter. By choosing $\gamma$ sufficiently larger than $\varepsilon^{-1}\sigma_1$, we are able to apply Lemma \ref{semibdd}. Explicitly, for $|x|\approx 2^{\ell}$,
\begin{align}\label{highfreqp1 dd}
& \sigma_1(b^+-b^-)^2q_{1,>\lambda}^\pm H_{p^\pm} q_1\pm 2\gamma  (b^+-b^-)\left(b^\pm(\real a+\imag A_\alpha g^{\alpha 0})+\imag A_\alpha g^{\alpha k}\xi_k|_{\tau=b^{\pm}}\right)q_{1,>\lambda}^\pm\\
&=\sigma_1(b^+-b^-)^2q_{1,>\lambda}^\pm \left(H_{p^\pm} q_1^\pm+\left(\frac{2\gamma}{\sigma_1}\right)\left(\chi_{|x|<R_0}+\chi_{|x|>R_0}\right)\left(\frac{b^\pm}{b^\pm-b^\mp}\left(\real a+ g^{0\alpha}\imag A_\alpha\right)+\frac{\xi_k}{b^\pm-b^\mp}\imag A_\alpha g^{\alpha k}\right)\right)\nonumber\\
&\gtrsim\sigma_1(b^+-b^-)^2q_{1,>\lambda}^\pm \left(H_{p^\pm} q_1^\pm+\frac{2\gamma}{\sigma_1}(\varepsilon \real a\chi_{|x|<R_0}-c_\ell 2^{-\ell}\chi_{|x|>R_0})\right)\nonumber\\
&\gtrsim \sigma_1\chi_{|\xi|>\lambda}|\xi|^{2}\left(\mathbbm{1}_{{V_{R_0}^\pm}}-\frac{2\gamma}{\sigma_1} c_\ell2^{-\ell}\chi_{|x|>R_0}\right).
\nonumber
\end{align}

For the terms in (\ref{light cone ineq}) involving $q_{2,>\lambda}^\pm$, we may similarly split the physical space into an interior and exterior. In the former, the weakly $\varepsilon$-damping dominant applies once again (after which we merely use that the damping is non-negative for $|x|\leq 2R_0$ to drop the term altogether), and in the latter, $(c_\ell)$ provides quantitative control on the size of the perturbative terms. Using these strategies, applying Lemma \ref{non-trap} to both resulting pieces, and choosing $\sigma_2$ sufficiently larger than $\gamma$ gives that, for $|x|\approx 2^\ell$, 
\begin{align}\label{highfreqp2 dd}
    &\sigma_2(b^+-b^-)^2q_{2,>\lambda}^\pm H_{p^\pm} q_2\pm 2\gamma  (b^+-b^-)\left(b^\pm(\real a+\imag A_\alpha g^{\alpha 0})+\imag A_\alpha g^{\alpha k}\xi_k|_{\tau=b^{\pm}}\right)q_{2,>\lambda}^\pm\\
    &=\sigma_2\chi_{|x|<R_0}(b^+-b^-)^2q_{2,>\lambda}^\pm  \left(H_{p^\pm}  q_2^\pm+\left(\frac{2\gamma}{\sigma_2}\right)\frac{b^\pm}{b^\pm-b^\mp}\left(\real a+ g^{0\alpha}\imag A_\alpha\right)+\left(\frac{2\gamma}{\sigma_2}\right)\frac{\xi_k}{b^\pm-b^\mp}\imag A_\alpha g^{\alpha k}\right)\nonumber\\
    &\qquad+\sigma_2\chi_{|x|>R_0}(b^+-b^-)^2q_{2,>\lambda}^\pm  \left(H_{p^\pm} q_2^\pm+\left(\frac{2\gamma}{\sigma_2}\right)\frac{b^\pm}{b^\pm-b^\mp}\left(\real a+ g^{0\alpha}\imag A_\alpha\right)+\left(\frac{2\gamma}{\sigma_2}\right)\frac{\xi_k}{b^\pm-b^\mp}\imag A_\alpha g^{\alpha k}\right)\nonumber\\
    &\gtrsim \sigma_2\chi_{|x|<R_0}(b^+-b^-)^2q_{2, >\lambda}^\pm H_{p^\pm}q_2^\pm+\sigma_2\chi_{|x|>R_0}(b^+-b^-)^2q_{2, >\lambda}^\pm \left(H_{p^\pm}q_2^\pm-\frac{2\gamma}{\sigma_2}c_{\ell}2^{-\ell}\right)\nonumber\\
    &\gtrsim \sigma_2\chi_{|\xi|>\lambda} \chi_{|x|<R_0}c_\ell 2^{-\ell}|\xi|^{2}\mathbbm{1}_{W^\pm}+ \sigma_2\chi_{|\xi|>\lambda} \chi_{|x|>R_0}c_{\ell} 2^{-\ell}|\xi|^{2}\left(\mathbbm{1}_{W^\pm}-\frac{2\gamma}{\sigma_2}\right)\nonumber\\
    &\gtrsim \sigma_2\chi_{|\xi|>\lambda} \chi_{|x|<R_0}c_\ell 2^{-\ell}|\xi|^{2}\mathbbm{1}_{W^\pm}+ \sigma_2\chi_{|\xi|>\lambda} \chi_{|x|>R_0}c_{\ell} 2^{-\ell}|\xi|^{2}\mathbbm{1}_{W^\pm}\nonumber.
\end{align} 
Recall that $V_{R_0}^\pm\cup W^\pm=T^*\R^3\setminus o$ and $W^\pm\supset\{|x|>R_0\}$. We may readily combine (\ref{highfreqp1 dd}) and (\ref{highfreqp2 dd}) to directly get the estimate \begin{equation}\label{highfreqp2}
\left(H_pq+2\gamma \left(\tau \real a+\imag A_\alpha (g^{\alpha 0}\tau+g^{\alpha k}\xi_k)\right) q\right)\big|_{\tau=b^\pm}\gtrsim \mathbbm{1}_{|\xi|\geq\lambda}c_\ell 2^{-\ell}|\xi|^{2},\qquad |x|\approx 2^\ell
\end{equation} holds. All together, we require that $1\ll\sigma_1,\ \sigma_1\varepsilon^{-1}\ll\gamma\ll\sigma_2$.

From (\ref{highfreqp2}), we use
that $(c_\ell)$ is a slowly-varying, summable sequence  to conclude that 
$$\left(H_pq+2\gamma \left(\tau \real a+\imag A_\alpha (g^{\alpha 0}\tau+g^{\alpha k}\xi_k)\right) q\right)\big|_{\tau=b^\pm}\gtrsim \chi_{|\xi|>\lambda}\langle x\rangle^{-2}|\xi|^{2}.$$ 

The work on the elliptic set in e.g. \cites{MST20}, \cites{Kof22} applies without modification. Summarily, if we write 
\begin{equation}\label{ell set corr}H_pq+2\gamma \left(\tau \real a+\imag A_\alpha (g^{\alpha 0}\tau+g^{\alpha k}\xi_k)\right) q+pm=(a_0-m)\tau^2+\left(a_1+(b^++b^-)m\right)\tau+(a_2-b^+b^-m),
\end{equation}
where $a_j\in S^j(T^*\R^3)$, then we choose $${m}=-(b^+-b^-)^{-2}\left(a_1(b^++b^-)+2(a_0b^+b^-+a_2)\right).$$ Such an $m$ ensures that the quadratic polynomial (\ref{ell set corr}) in $\tau$ is concave up and has no real zeros. 
\end{proof}
The proof of Theorem \ref{high freq thm} is highly similar to the proof in \cites{Kof22} for the analogous result. There are a few additional terms to deal with, but the aforementioned work demonstrates how to deal with them, as they are lower-order. We will briefly summarize the argument in \cites{Kof22} and add in additional details for the new terms. First, we remark that one can readily reduce the theorem to a simplified estimate, which we state as a proposition.

\begin{prop}\label{high freq red} In order to prove Theorem \ref{high freq thm}, it suffices to prove that
\begin{equation}\label{reduced high freq}
\norm{v_{>\lambda}}_{LE^1_{<2R_0}}\lesssim C(\lambda,\gamma)\left(\norm{Pv}_{LE^*_c}^{1/2}\norm{v}_{LE^1}^{1/2}+\norm{v}_{L^2L^2}\right)+\gamma\lambda^{-1/2}\norm{v}_{LE^1}
\end{equation} for all $v$ supported in $\{|x|\leq 2R_0\},$ where $v_{>\lambda}=\chi_{|\xi|>\lambda}(D_x)v.$
\end{prop}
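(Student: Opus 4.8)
The plan is to reduce the full high frequency estimate \eqref{high freq est} to the localized estimate \eqref{reduced high freq} by a standard partition of the domain into an interior region $\{|x|\le 2R_0\}$, where trapping lives and the damping is to be exploited, and an exterior region $\{|x|>R_0\}$, where asymptotic flatness alone suffices. First I would recall that in the exterior region one already has the relevant local energy estimate available directly from the flat (or geometrically-adapted flat) wave theory used in \cites{MST20} and \cites{Kof22}: since $\norm{(g-m,A,a,V)}_{AF_{>R_0}}\le\mathbf c\ll1$, the operator $P$ is a small $AF$ perturbation of the d'Alembertian there, and the potentials $A,V$ together with the tail of $a$ are merely subprincipal $AF$ terms, absorbable for $\mathbf c$ small after summing the slowly-varying sequence $(c_j)$. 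Thus the only genuinely new content is the interior bound, and the reduction consists of cutting $u$ with $\chi_{<2R_0}$, commuting the cutoff through $P$ (the commutator is first-order, compactly supported, hence controlled by $\norm{u}_{LE^1}$ with a loss we can afford), and then invoking \eqref{reduced high freq} for $v=\chi_{<2R_0}u$.

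Next I would address why it suffices to control only the high spatial frequencies $v_{>\lambda}=\chi_{|\xi|>\lambda}(D_x)v$ in the interior. The medium and low frequency pieces $\chi_{|\xi|\le\lambda}(D_x)v$ are elliptic-type contributions away from trapping and, as the excerpt notes, require no change from \cites{MST20}; they are handled by the medium/low frequency analysis of Section \ref{rem freq section} and fold into the right-hand side of \eqref{high freq est} through the $\norm{\langle x\rangle^{-2}u}_{LE}$ term and the forcing norm. So the surviving obstruction is precisely the high-frequency, interior piece, which is where trapping can obstruct local energy decay and where the damping must be used. The bound \eqref{reduced high freq} encodes exactly this: the left side is the desired $LE^1$ control of $v_{>\lambda}$ localized to $\{|x|\le 2R_0\}$; the right side permits a constant $C(\lambda,\gamma)$ depending on the frequency threshold and the scaling parameter (harmless, since $\lambda$ and $\gamma$ are fixed before taking $T\to\infty$), the bilinear forcing term $\norm{Pv}_{LE^*_c}^{1/2}\norm{v}_{LE^1}^{1/2}$ (absorbable by Young's inequality at the cost of enlarging the implicit constant on a genuine $\norm{Pv}_{LE^*}$ term), a lower-order $L^2L^2$ remainder (absorbed into $\norm{\langle x\rangle^{-2}u}_{LE}$ after the cutoff, modulo commutator errors), and crucially the term $\gamma\lambda^{-1/2}\norm{v}_{LE^1}$, which is a \emph{small} multiple of the global $LE^1$ norm once $\lambda$ is chosen large relative to $\gamma$, and hence can be absorbed into the left-hand side of \eqref{high freq est} after summing over the dyadic exterior pieces.

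The actual mechanism behind \eqref{reduced high freq} is the positive commutator argument driven by the escape function of Lemma \ref{escape}: one forms the commutator $[P,\,\mathrm{Re}\,Q]$ (with $Q=q^{\mathrm w}$ the Weyl quantization of $q=\tau q_0+q_1$, appropriately realized as a differential-in-$t$ operator with pseudodifferential-in-$x$ coefficients), pairs it against $v$, integrates by parts in space-time, and uses the symbolic lower bound $H_pq-2is_{skew}q+pm\gtrsim\chi_{|\xi|>\lambda}\langle x\rangle^{-2}(\tau^2+|\xi|^2)$ together with the sharp Gårding inequality to extract a positive term $\gtrsim\norm{v_{>\lambda}}_{LE^1_{<2R_0}}^2$; the $pm$ correction is absorbed by rewriting $p^{\mathrm w}v=-Pv+(\text{lower order})v$, which produces the $\norm{Pv}_{LE^*_c}^{1/2}\norm{v}_{LE^1}^{1/2}$ term; the quantization, Gårding, and symbol-calculus errors are $O(\lambda^{-1/2})$ in the relevant norms times $\norm{v}_{LE^1}$, producing the $\gamma\lambda^{-1/2}\norm{v}_{LE^1}$ remainder; and the handling of $s_{skew}$ — the one place where the new $\imag A$ terms and the damping interact — is exactly what the $\varepsilon$-damping dominant hypothesis in Lemma \ref{escape} was designed to accommodate, so it is imported wholesale. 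The main obstacle I anticipate is bookkeeping rather than conceptual: ensuring that every error term generated by commuting the various spatial cutoffs ($\chi_{<2R_0}$, $\chi_{|\xi|>\lambda}$) through $P$ and through $Q$ is either compactly supported and lower-order (hence absorbed into $\norm{\langle x\rangle^{-2}u}_{LE}$ or the $L^2L^2$ term) or carries a gain in $\lambda$ (hence absorbed into the $\gamma\lambda^{-1/2}\norm{v}_{LE^1}$ slack), and that the $\gamma$-dependence tracked through the scaling does not secretly contaminate the small parameter — this is why the proof insists on the ordering $1\ll\sigma_1\ll\varepsilon\gamma\ll\sigma_2\ll\lambda$.
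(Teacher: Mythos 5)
Your proposal captures the general spirit of the reduction (decompose into interior/exterior and appeal to exterior wave estimates, isolate the high spatial frequencies in the interior, and at the end absorb the $\gamma\lambda^{-1/2}\norm{v}_{LE^1}$ term by choosing $\lambda\gg\gamma$), but it omits essential steps and contains two concrete errors.

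First, you never address the passage from the time-bounded, initial-data estimate \eqref{high freq est} to the time-unrestricted estimate \eqref{reduced high freq}. The left side of \eqref{high freq est} lives on $[0,T]$ and the right side carries $\norm{\partial u(0)}_{L^2}$, whereas \eqref{reduced high freq} is stated for $v$ with no time boundary terms; in fact the ensuing commutator proof integrates over all of $\R_t$. The paper bridges this by (i) reducing via Duhamel's formula on unit time intervals to the case $u[0]=0$, $Pu\in LE^*_c$, and (ii) removing the $[0,T]$ restriction with a time-cutoff argument. Neither step appears in your plan, and without them you have no way to discard the $\norm{\partial u(0)}_{L^2}$ term or to legitimately invoke an estimate over all of $\R_t$.

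Second, the cutting-and-commuting step as you state it does not close. The commutator $[P,\chi_{<2R_0}]$ is a first-order differential operator supported in the annulus $\{R_0\le|x|\le4R_0\}$, so $\norm{[P,\chi_{<2R_0}]u}_{LE^*_c}\approx\norm{u}_{LE^1_c}$ with an $O(1)$ constant. This is not ``a loss we can afford'': it is not controlled by any term on the right of \eqref{high freq est} (in particular not by $\norm{\langle x\rangle^{-2}u}_{LE}$, which has no derivatives, nor by $\norm{Pu}_{LE^*+L^1L^2}$), and it is not small enough to absorb on the left. The paper avoids this by \emph{first} reducing, via asymptotic flatness and the steps above, to $Pu$, $u[0]$ compactly supported in $\{|x|\le2R_0\}$; the exterior is then a small AF perturbation of the free wave equation with essentially trivial data, so exterior wave estimates control $u$ there, and only afterwards can one cut $u$ to the interior with the commutator safely bounded by the already-controlled exterior local energy norm. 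The ordering of these reductions is essential; your version inverts it.

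Third, a conceptual confusion: the low-\emph{spatial}-frequency part $\chi_{|\xi|\le\lambda}(D_x)v$ is not handled by ``the medium/low frequency analysis of Section \ref{rem freq section}.'' That section is a time-frequency decomposition (cutoffs in $\tau=D_t$), logically downstream of Theorem \ref{high freq thm}, and invoking it here would be circular. The low spatial frequencies of $v$ are controlled elementarily because $v$ is by then compactly supported in $\{|x|\le2R_0\}$, so $\norm{v-v_{>\lambda}}_{LE^1_{<2R_0}}\lesssim\lambda\norm{v}_{L^2L^2}$ by Bernstein-type estimates; this is what the paper means by ``add back in the low frequencies.''

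Your description of the subsequent commutator mechanism (Lemma \ref{escape}, sharp G\aa rding, the role of the damping-dominance hypothesis) is accurate but is the content of the \emph{proof} of Theorem \ref{high freq thm}, not of this Proposition, which is purely the reduction.
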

This reduction was shown in \cites{MST20} and \cites{Kof22}. We will not reproduce the proof here, but the idea is as follows:
\begin{enumerate}
    \item Use asymptotic flatness to reduce to the case of $Pu$ and $u[0]$ compactly-supported in $\{|x|\leq 2R_0\}$.
    \item Utilize a unit time interval and Duhamel's theorem to reduce to $u[0]=0$ and $Pu\in LE^*_c$.
    \item Remove the upper bound on the time integrals using a cutoff argument, making it sufficient to integrate in $t$ from $-\infty$ to $\infty.$
    \item Reduce to solutions supported in $\{|x|\leq 2R_0\}$ via standard exterior wave estimates.
    \item Add back in the low frequencies, then take $\lambda\gg \gamma$ and apply Young's inequality for products in order to bootstrap the $LE^1$ terms on the right into the left.
\end{enumerate}

Now, we will prove Theorem \ref{high freq thm}. During the proof, we will use $\inprod{\cdot,\cdot}$ to denote the $L^2_tL^2_x$ inner product. We also recall the $\gamma$-scaling that was introduced after Definition \ref{GCC}. The proof is a modified positive commutator argument. Analyzing $2\text{Im}\inprod {P v, \left(q^{\operatorname{w}}-\frac{i}{2}m^{\operatorname{w}}\right)v}$ will provide, up to lower-order error terms, precisely $\inprod{(H_{p}q-2is_{skew} q+mp)^{\operatorname{w}} v,v}$, which features the quantization of the upper bound term in the symbol estimate of Lemma \ref{escape}. Through a partition of unity argument in the frequency space, we may utilize the sharp G\r{a}rding inequality in view of Lemma \ref{escape} to establish a lower bound in the first-order local energy space. The original inner product involving $Pv$ may be dealt with using Cauchy-Schwarz and similar frequency analysis to partially compose the upper-bound of Theorem \ref{high freq thm}, while lower-order error terms provide lower-order obstructions that may either be absorbed on the lower-bound side via frequency localization arguments or are located on the upper-bound side (depending on the order of the error). 
\begin{proof}[Proof of Theorem \ref{high freq thm}] 
First, decompose $P$ its principal self-adjoint and skew-adjoint parts, plus an error, as $$P=\Box_g+P_{skew}+\tilde{P},$$ where $\tilde{P}$ represent all remaining lower-order terms. Integrating by parts gives us that
\begin{align}\label{initial comm}
 2\text{Im}
\inprod {P v, \left(q^{\operatorname{w}}-\frac{i}{2}m^{\operatorname{w}}\right)v} &=\inprod{i[\Box_g, q^{\operatorname{w}}]v,v}+\gamma\inprod{(P_{skew}q^{\operatorname{w}}+q^{\operatorname{w}}P_{skew})v,v}+\frac{1}{2}\inprod{(\Box_g m^{\operatorname{w}}+m^{\operatorname{w}}\Box_g)v,v}\\
&\qquad+\operatorname{lower-order\  terms}\nonumber.
\end{align}  Notice that the non-lower-order-terms on the right-hand side may be written, as a result of the Weyl calculus, as \begin{align*}\inprod{(H_{p}q-2is_{skew} q+mp)^{\operatorname{w}} v,v}+\inprod{{R}_0 v,v},\qwhere {R}_0\in\Psi^{0}(\R^4).
\end{align*} We will analyze each side separately. First, we record the full, explicit calculation for posterity, keeping the lower-order terms in (\ref{initial comm}) on the left-hand side of the equality: 
\begin{align}\label{comm}  2\text{Im}
&\inprod {P v, \left(q^{\operatorname{w}}-\frac{i}{2}m^{\operatorname{w}}\right)v} +\frac{i\gamma}{2}\inprod{[(\imag A_\alpha g^{\alpha\beta}D_\beta+D_\alpha g^{\alpha\beta}\imag A_\alpha), m^{\operatorname{w}}]v,v}+\frac{i\gamma }{2}\inprod{ [a D_t, m^{\operatorname{w}}]v,v}\\
&\qquad\qquad-i\gamma\inprod{[(\real A_\alpha g^{\alpha\beta}D_\beta+D_\alpha g^{\alpha\beta}\real A_\beta), q^{\operatorname{w}}]v,v}\nonumber\\
&\qquad\qquad-\frac{\gamma}{2}\inprod{\left(m^{\operatorname{w}}(\real A_\alpha g^{\alpha\beta}D_\beta+D_\alpha g^{\alpha\beta}\real A_\beta) +(\real A_\alpha g^{\alpha\beta}D_\beta+D_\alpha g^{\alpha\beta}\real A_\beta)m^{\operatorname{w}}\right)v,v}\nonumber\\
&\qquad\qquad+i\gamma^2\inprod{[\imag A_\alpha g^{\alpha\beta}\imag A_\beta-\real A_\alpha g^{\alpha\beta}\real A_\beta, q^{\operatorname{w}}]v,v}\nonumber\\
&\qquad\qquad-2\gamma^2\inprod{\left(q^{\operatorname{w}}(\imag A_\alpha g^{\alpha\beta}\real A_\beta)+(\imag A_\alpha g^{\alpha\beta}\real A_\beta)q^{\operatorname{w}}\right)v,v}\nonumber\\
&\qquad\qquad-\frac{\gamma^2}{2}\inprod{\left(m^{\operatorname{w}}(\real A_\alpha g^{\alpha\beta}\real A_\beta)+(\real A_\alpha g^{\alpha\beta}\real A_\beta)m^{\operatorname{w}}\right)v,v}\nonumber\\
&\qquad\qquad+\frac{\gamma^2}{2}\inprod{\left(m^{\operatorname{w}}(\imag A_\alpha g^{\alpha\beta}\imag A_\beta)+(\imag A_\alpha g^{\alpha\beta}\imag A_\beta)m^{\operatorname{w}}\right)v,v}\nonumber\\
&\qquad\qquad+\gamma^2\inprod{[\real A_\alpha g^{\alpha\beta}\imag A_\beta, m^{\operatorname{w}}]v,v}
+i\gamma^2\inprod{[ q^{\operatorname{w}},\real V]v,v}+\frac{i\gamma}{2}\inprod{[\imag V, m^{\operatorname{w}}]v,v}\nonumber\\ 
&\qquad\qquad-\frac{\gamma^2}{2}\inprod{(m^{\operatorname{w}}\real V+\real V m^{\operatorname{w}})v,v}-{\gamma^2}\inprod{(q^{\operatorname{w}}\imag V+\imag V q^{\operatorname{w}})v,v}\nonumber\\
&\qquad\qquad+\gamma\inprod{\imag a D_t  m^{\operatorname{w}} v,v}+\gamma\inprod{[\imag a, q^{\operatorname{w}}]D_t v,v}\nonumber\\
&=\inprod{i[\Box_{g},q^{\operatorname{w}}]v,v}+\frac{1}{2}\inprod{(\Box_{g}m^{\operatorname{w}}+m^{\operatorname{w}}\Box_{g})v,v}\nonumber \\ 
&\qquad\qquad+\gamma\inprod{\left(q^{\operatorname{w}}(\real a  D_t+\imag A_\alpha g^{\alpha\beta}D_\beta+D_\alpha g^{\alpha\beta}\imag A_\beta)+(\real a D_t+\imag A_\alpha g^{\alpha\beta}D_\beta+D_\alpha g^{\alpha\beta}\imag A_\beta) q^{\operatorname{w}}\right)v,v} \nonumber\\
&=\inprod{(H_{p}q-2is_{skew} q+mp)^{\operatorname{w}} v,v}+\inprod{{R}_0 v,v}\nonumber.
\end{align} Split $v$ as $$v=v_{>>\lambda}+v_{<<\lambda}:=\chi_{|\xi|+|\tau|>\lambda}(\partial)v+\chi_{|\xi|+|\tau|<\lambda} (\partial)v,$$ and 
 recall that $q,m$ are both supported at frequencies $|\xi|\geq\lambda.$ 
By plugging in the frequency decomposition, choosing $\gamma$ large enough, and applying the sharp G\r{a}rding inequality to $v_{>>\lambda}$ (justified by Lemma \ref{escape}), we obtain that 
$$\inprod{(H_{p}q-2is_{skew} q+mp)^{\operatorname{w}} v,v}\gtrsim \inprod{\left(\chi_{|\xi|>\lambda}\langle x\rangle^{-2}(|\xi|^2+\tau^2)\right)^{\operatorname{w}} v_{>>\lambda},v_{>>\lambda}}\\
-\norm{v_{>>\lambda}}_{H^{1/2}_{t,x}}^2+\inprod{Sv,v},$$ where $S\in\Psi^{-\infty}(\R^4)$ arises from the terms including $v_{<<\lambda}$ (since $\chi_{|\xi|+|\tau|<\lambda}\in S^{-\infty})$). Integrating by parts one time gives that 
\begin{align*}
\inprod{\left(\chi_{|\xi|>\lambda}\langle x\rangle^{-2}(|\xi|^2+\tau^2)\right)^{\operatorname{w}}v_{>>\lambda},v_{>>\lambda}}&\gtrsim\norm{\partial v_{>\lambda}}_{LE_{<2R_0}}^2+\inprod{{R}_1v_{>>\lambda},v_{>>\lambda}},\qwhere {R}_1\in \Psi^1(\R^4).
\end{align*} 
 All together, the right-hand side of (\ref{comm}) is bounded below by a multiple of
\begin{align*}
    \norm{\partial v_{>\lambda}}_{LE_{<2R_0}}^2-\left|\inprod{{R}_1v_{>>\lambda},v_{>>\lambda}}\right|-\norm{v_{>>\lambda}}_{H^{1/2}_{t,x}}^2-\left|\inprod{{R}_0v,v}\right|.
\end{align*}

Bounding the errors is performed using Plancherel's theorem. With more specificity, one may utilize the standard Sobolev mapping properties of pseudodifferential operators, the frequency localization, and the compact spatial support of $v$ to obtain that 
\begin{align*}
\left|\inprod{{R}_1v_{>>\lambda},v_{>>\lambda}}\right|+\norm{v_{>>\lambda}}_{H^{1/2}_{t,x}}^2\lesssim\lambda^{-1}\norm{v}_{LE^1}^2.
\end{align*}
One shows that $$\left|\inprod{{R}_0v,v}\right|\lesssim C(\lambda)\norm{v}_{L^2L^2}^2$$ in a similar manner, except that one cannot leverage frequency localization and may incur an implicit constant which depends on $\lambda$. Such a term will appear on the upper bound side of (\ref{reduced high freq}), so having such an implicit constant is permissible. 

Summarizing, we have shown that the right-hand side of (\ref{comm}) is bounded below by a multiple of 
\begin{align}\label{RHS comm bd}\norm{\partial v_{>\lambda}}_{LE_{<2R_0}}^2-C(\lambda)\norm{v}_{L^2L^2}^2-\lambda^{-1}\norm{v}_{LE^1}^2.
\end{align} Next, we consider the left-hand side of (\ref{comm}). By the Cauchy-Schwarz and Plancherel's theorem,
\begin{align*}\inprod {P v, \left(q^{\operatorname{w}}-\frac{i}{2}m^{\operatorname{w}}\right)v}&=\inprod {P v, \left(q^{\operatorname{w}}-\frac{i}{2}m^{\operatorname{w}}\right)v_{>>\lambda}}+\inprod{{S}v,v},\qquad\qquad {S}\in \Psi^{-\infty}(\R^4)\\
&\lesssim C(\lambda)\left(\norm{Pv}_{LE^*_c}\norm{v}_{LE^1}+\norm{v}_{L^2L^2}^2\right).
\end{align*} By once again applying frequency splitting, the remaining inner products on the left-hand side of (\ref{comm}) are of the form $$(\gamma+\gamma^2)\left(\inprod{\widetilde{R}_0v,v}+\inprod{\widetilde{R}_1v_{>>\lambda},v_{>>\lambda}}\right),\qquad \widetilde{R}_j\in \Psi^j(\R^4).$$ We have discussed how to bound both of these terms; namely, $$\left|\inprod{\widetilde{R}_0v,v}\right|+\left|\inprod{\widetilde{R}_1v_{>>\lambda},v_{>>\lambda}}\right|\lesssim C(\lambda)\norm{v}_{L^2L^2}^2+\lambda^{-1}\norm{v}_{LE^1}^2.$$ Factoring in the scalar coefficients of these inner products, we have demonstrated that the left-hand side of (\ref{comm}) is bounded above by a multiple of
\begin{align}\label{LHS comm bd}
C(\lambda,\gamma)\left(\norm{Pv}_{LE^*_c}\norm{v}_{LE^1}+\norm{v}_{L^2L^2}^2\right)+\gamma^2\lambda^{-1}\norm{v}_{LE^1}^2
\end{align}
Combining (\ref{RHS comm bd})-(\ref{LHS comm bd}) in application to (\ref{comm}) and completing the $LE^1$ norm on the lower-bound side provides (\ref{reduced high freq}). 
\end{proof}
\subsection{Remaining Frequency Analyses and Two-Point Local Energy Decay}\label{rem freq section}
In order to establish Theorem \ref{two point thm}, we require similar estimates in the low and medium frequency regimes. The damping does not play a meaningful role in either regime, as it may be treated as a lower-order perturbation term. Like in \cites{Kof22}, the relevant estimates from \cites{MST20} carry through. We will briefly summarize why this is the case, in lieu of full proofs. 

At low frequencies, the obstruction to local energy decay arises when $P$ has a \textit{resonance} at frequency zero (see Section \ref{resolvent section} for a precise definition and further discussion on spectral obstructions to local energy decay). A quantitative condition on the existence of corresponding zero resonant states is the following \textit{zero non-resonance condition}.
\begin{definition}
$P$ is said to satisfy a \textit{zero non-resonance condition} if there exists some $K_0$, independent of $t$, such that 
\begin{align}\label{zero res}
    \norm{u}_{\dot{H}^1}\leq K_0\norm{P_0 u}_{\dot{H}^{-1}}\qquad \forall u\in \dot{H}^1.
\end{align} 
\end{definition}
The elliptic operator $$P_0=P\big|_{D_t=0}=(D_j+A_j)g^{jk}(D_k+A_k)+A_0g^{0j}(D_j+A_j)+(D_j+A_j)g^{j0}A_0+(A_0)^2g^{00}+V$$ represents $P$ at time frequency zero, and 
we underscore that the damping does not appear. Hence, the damping has no bearing on whether or not the zero non-resonance condition holds. For example, if $P$ is stationary and asymptotically flat with $\imag A, \equiv 0$, $V>0,$ and $A_0=0$, then $P$ satisfies the zero non-resonance condition. This follows from Lemma 6.2iii in \cites{MST20}, which also features a more general condition. The relevant low frequency estimate is the following, and the corresponding theorem in \cites{MST20} is Theorem 6.1.
 \begin{Th}\label{low freq thm}
Let $P$ be an asymptotically flat damped wave operator which satisfies the zero non-resonance condition, and suppose that $\partial_t$ is uniformly time-like while the constant time slices are uniformly space-like. Then, the bound
\begin{equation}\label{low freq est}
\norm{u}_{LE^1}\lesssim\norm{\partial_t u}_{LE^1_c}+\norm{Pu}_{LE^*}
\end{equation} holds for all $u\in \mathcal{S}(\R^4)$.
\end{Th}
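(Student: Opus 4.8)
\textbf{Proof strategy for Theorem \ref{low freq thm}.}
The plan is to follow the low frequency analysis in \cites{MST20} (their Theorem 6.1) essentially verbatim, taking advantage of the key structural observation that the damping term $ia D_t$ does \emph{not} survive when one passes to the stationary reduction at time frequency zero: the operator $P_0 = P\big|_{D_t=0}$ is unaffected by $a$. First I would reduce the estimate to a statement about the resolvent family. Taking a (partial) Fourier transform in $t$, or equivalently working with the stationary operators $P_\omega = P\big|_{D_t = \omega}$, the estimate \eqref{low freq est} is equivalent to a uniform-in-$\omega$ family of fixed-frequency resolvent bounds for $|\omega|$ small, of the schematic form $\norm{u}_{\mathcal{LE}^1_\omega} \lesssim \norm{P_\omega u}_{\mathcal{LE}^*}$ together with the localized term $\norm{\partial_t u}_{LE^1_c}$ that captures the low-frequency truncation. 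Because the hypothesis only concerns $P_0$, one then needs to propagate the coercivity at $\omega = 0$ (this is exactly the zero non-resonance condition \eqref{zero res}, i.e. invertibility of $P_0 : \dot H^1 \to \dot H^{-1}$ with a uniform constant) to a neighborhood of $\omega = 0$ by a perturbative argument.

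The main steps, in order: (i) reduce \eqref{low freq est} to the fixed-frequency elliptic estimates for $P_\omega$ with $|\omega| \ll 1$, absorbing the time-frequency cutoff into the $\norm{\partial_t u}_{LE^1_c}$ term as in \cites{MST20}; (ii) at $\omega = 0$, invoke the zero non-resonance hypothesis \eqref{zero res} to get $\norm{u}_{\dot H^1} \lesssim \norm{P_0 u}_{\dot H^{-1}}$, and upgrade this to the weighted spaces $\mathcal{LE}^1, \mathcal{LE}^*$ using the asymptotic flatness of the coefficients (a standard positive-commutator / exterior elliptic estimate argument, exactly as in Section 6 of \cites{MST20}); (iii) treat $P_\omega - P_0$ as a perturbation — note $P_\omega - P_0$ contains terms of order $\omega$ and $\omega^2$ with coefficients that decay at spatial infinity (the $g^{00}\omega^2$, $g^{0j}\omega\xi_j$ pieces), together with the damping contribution $i a \omega$, which is likewise $O(\omega)$ with bounded, spatially-localized-enough coefficient — so for $|\omega|$ small this perturbation is absorbed into the left-hand side, yielding the uniform bound on a punctured neighborhood of $0$; (iv) reassemble via the (inverse) Fourier transform in $t$ to recover \eqref{low freq est} on the full space-time norms. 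Throughout, the damping $ia D_t$ is simply carried along as one more first-order asymptotically flat perturbation term with the extra favorable feature that it is $O(\omega)$ and hence harmless at low frequency; this is why no new ideas beyond \cites{MST20} are needed.

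The only genuinely delicate point is step (iii): one must check that the damping term $i a \omega$, while bounded, really does enter at order $\omega$ in the appropriate norms so that it can be absorbed for small $\omega$ — but this is immediate from asymptotic flatness, since $\vertiii{\inprod{x} a}_1 < \infty$ gives $a$ the decay and regularity needed to treat $a\omega u$ as an $\mathcal{LE}^*$-bounded perturbation of size $O(\omega)\norm{u}_{\mathcal{LE}}$, which the $\mathcal{LE}^1_\omega$ norm on the left dominates. The rest is a faithful transcription of the argument in \cites{MST20}, and since (as the authors emphasize) the damping plays no meaningful role at low frequencies, I would present this result without a detailed proof, citing Theorem 6.1 of \cites{MST20} and noting only the trivial modification that $ia D_t$ is absorbed as a lower-order asymptotically flat term.
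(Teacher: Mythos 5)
Your proposal takes essentially the same route as the paper: cite Theorem~6.1 of \cite{MST20}, observe that the damping $iaD_t$ vanishes in $P_0 = P|_{D_t=0}$ so the zero non-resonance hypothesis is untouched, and then absorb the damping's contribution near zero frequency as one more asymptotically flat first-order term. The paper indeed states this result without a detailed proof, exactly as you propose, noting only the role of the weighted elliptic estimates for $\Delta$ and the fact that the damping only shows up when passing from $P_0 u$ to $Pu$.

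One imprecision worth flagging in your step (iii): you claim the perturbation $P_\omega - P_0$ ``is absorbed into the left-hand side,'' and you list $g^{00}\omega^2$ among terms ``with coefficients that decay at spatial infinity.'' After the normalization $g^{00}=-1$ used throughout, that coefficient is identically $-1$ and does not decay; the piece $-\omega^2 u$ (equivalently $\partial_t^2 u$) therefore cannot be absorbed into $\norm{u}_{\mathcal{LE}^1_\omega}$ on the left, because the $\mathcal{LE}^*$ weight $\inprod{x}^{1/2}$ grows while the $\mathcal{LE}^1$ control of $u$ carries the decaying weight $\inprod{x}^{-1}$. This is precisely why the estimate \eqref{low freq est} carries the error term $\norm{\partial_t u}_{LE^1_c}$: the paper's own remark is that the damping (and the other $D_t$-bearing pieces of $P-P_0$) is ``absorbed by the error term in \eqref{low freq est}'' on a compact spatial set, with asymptotic flatness handling the exterior, not absorbed into the left-hand side. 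The overall strategy and conclusion are correct and match the paper; only this bookkeeping of where the $D_t$-dependent terms land needs tightening.
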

The proof of Theorem \ref{low freq thm} leverages weighted elliptic estimates for the flat Laplacian $\Delta$ in order to get similar estimates for $AF$ perturbations. Once again, the damping does not play a meaningful role. At frequency zero, it provides no contribution, and near frequency zero, it is absorbed by the error term in (\ref{low freq est}); the damping arises when estimating $P_0u$ by $Pu$ within a compact spatial set.

At medium frequencies, we require a weighted estimate which implies local energy decay for solutions supported at any range of time frequencies bounded away from both zero and infinity. This is rooted in Carleman estimates. The Carleman weights which we need are constructed in e.g. \cites{Bo18},  \cites{KT01}. The main medium frequency estimate is the following, and the corresponding theorem in \cites{MST20} is Theorem 5.4. We remark that the theorem does not imply an absence of embedded eigenvalues/resonances on the real line.
\begin{Th}\label{med freq thm}
Let $P$ be an asymptotically flat damped wave operator, and suppose that $\partial_t$ is uniformly time-like while the constant time slices are uniformly space-like. Then, for any $\delta>0$, there exists a bounded, non-decreasing radial weight $\varphi=\varphi(\ln (1+r))$ so that for all $u\in \mathcal{S}(\mathbb{R}^4)$, we have the bound 
\begin{multline}\label{med freq est}
\norm{(1+\varphi_+'')^{1/2}e^\varphi\left(\nabla u,\inprod{r}^{-1}(1+\varphi')u\right)}_{LE}+\norm{(1+\varphi')^{1/2}e^\varphi\partial_t u}_{LE}
   \\ \lesssim \norm{e^\varphi Pu}_{LE^*}
+\delta\left(\norm{(1+\varphi')^{1/2}e^\varphi u}_{LE}+\norm{\inprod{r}^{-1}(1+\varphi_+'')^{1/2}(1+\varphi')e^\varphi \partial_t u}_{LE}\right),
\end{multline} with the implicit constant independent of $\delta$.
\end{Th}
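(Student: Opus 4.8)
### Proof plan for Theorem \ref{med freq thm}

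\textbf{Overview.} The statement is a medium-frequency Carleman-type estimate, and my plan is to obtain it essentially verbatim from the corresponding result in \cites{MST20} (their Theorem 5.4), observing that the damping term $iaD_t$ is a first-order perturbation subordinate to the terms already present, so it can be absorbed into the error on the right-hand side of (\ref{med freq est}). The weight $\varphi = \varphi(\ln(1+r))$ will be one of the standard radial Carleman weights from \cites{Bo18}, \cites{KT01}: bounded, non-decreasing, with $\varphi'$ and $\varphi_+''$ (the positive part of $\varphi''$) controlling the relevant commutator positivity. The free parameter $\delta$ is built into the weight construction: a larger Carleman parameter forces $\varphi'$ large and $\varphi_+''$ comparable, at the cost of a worse (but $\delta$-independent) implicit constant.

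\textbf{Key steps.} First, I would conjugate $P$ by $e^\varphi$, writing $P_\varphi = e^\varphi P e^{-\varphi}$, and split $P_\varphi$ into a self-adjoint and a skew-adjoint part with respect to $L^2(dx\,dt)$ using the volume form normalization $g^{00}=-1$ already in force. Second, I would run the positive-commutator/multiplier argument of \cites{MST20}: pairing $P_\varphi v$ (where $v = e^\varphi u$) against a suitable combination of $v$, $\partial_t v$, and a radial derivative multiplier, integrating by parts, and using the asymptotic flatness bounds together with the convexity properties of $\varphi$ to produce the positive bulk terms $\norm{(1+\varphi_+'')^{1/2}e^\varphi(\nabla u, \inprod{r}^{-1}(1+\varphi')u)}_{LE}^2 + \norm{(1+\varphi')^{1/2}e^\varphi\partial_t u}_{LE}^2$ on the left, with $\norm{e^\varphi Pu}_{LE^*}$ and the lower-order error terms on the right. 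Third — the only genuinely new bookkeeping relative to \cites{MST20} — I would track the contribution of the damping: the conjugated damping term is still $iaD_t$ (the weight $\varphi$ is time-independent, so conjugation does not alter it beyond what the formalism already handles), and since $a$ and $\inprod{x}a$ satisfy the $AF$ decay bounds, the pairing of $iaD_t v$ against the multiplier is dominated by $\norm{\inprod{r}^{1/2}a\,\partial_t u}_{L^2L^2}\cdot(\text{multiplier in }LE)$, which by Cauchy–Schwarz and the decay of $a$ is absorbed either into the $\partial_t u$ bulk term on the left (at the cost of shrinking its constant) or into the $\delta$-error $\norm{(1+\varphi')^{1/2}e^\varphi u}_{LE}$ on the right; crucially the damping brings no sign obstruction here because it is purely a perturbation, not being leveraged. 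Finally, I would invoke the construction of \cites{Bo18}, \cites{KT01} to fix, for the given $\delta$, a weight $\varphi$ for which the above scheme closes with a $\delta$-independent constant.

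\textbf{Main obstacle.} The substantive difficulty is entirely inherited from \cites{MST20} rather than created by the damping: it is the construction of the Carleman weight $\varphi$ and the verification that the conjugated operator admits the requisite commutator positivity uniformly down to medium frequencies, where neither the high-frequency escape-function machinery nor the low-frequency elliptic estimates are available. Since the excerpt permits me to cite Theorem 5.4 of \cites{MST20} directly and the damping is manifestly a lower-order $AF$ perturbation that does not interfere with the sign structure (unlike in the high-frequency regime, where the $\varepsilon$-damping dominant condition was needed), the proof here reduces to the remark that the argument of \cites{MST20} goes through unchanged, with the damping absorbed as described. I would therefore present this as a short proof-by-reduction, emphasizing the perturbative absorption of $iaD_t$ and citing the weight construction, rather than reproducing the full multiplier computation.
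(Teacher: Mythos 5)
Your proposal takes essentially the same approach as the paper: both reduce to Theorem 5.4 of \cites{MST20} and observe that, since the time-independent damping is a lower-order $AF$ perturbation (not being leveraged for its sign in this frequency regime), it is absorbable in the positive-commutator Carleman argument applied to $P_\varphi = e^\varphi P e^{-\varphi}$, with the weight $\varphi$ taken from the constructions of \cites{Bo18}, \cites{KT01}. The paper's brief discussion additionally records that the MST20 argument splits into an interior compact region (where the damping is also well-signed) and an exterior region glued by a cutoff, but this is the internal structure of the very argument you are invoking, so the two are in agreement.
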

The proof of this theorem utilizes two intermediate Carleman estimates within two different regions of space, which may be combined using a cutoff argument in order to prove Theorem \ref{med freq thm}.
\begin{enumerate}
    \item \textbf{Within a large compact set:} The damping term is well-signed and readily absorbable as a perturbation due to the conditions on the weight $\varphi,$ which will be convex.
    \item \textbf{Outside of a large compact set:} Here, the damping is a small $AF$ perturbation, so the proof in \cites{MST20} follows through without any modification. Within this region, the authors of \cites{MST20} bend the weight to be constant near infinity in order to apply exterior wave estimates. This leads to breaking this case into three sub-regions: one where the Carleman weight is convex, a transition region where the conditions break in order to bend the weight to be constant near infinity, and a region near infinity where the weight is constant.
\end{enumerate}
The proofs of the Carleman estimates in the above regions are based on positive commutator arguments utilizing the self- and skew-adjoint parts of the conjugated operator $P_\varphi=e^\varphi Pe^{-\varphi}.$ 

The high, low, and medium frequency estimates are the key ingredients needed to establish Theorem \ref{two point thm}. As in \cites{MST20}, \cites{Kof22}, it sufficient to remove the Cauchy data at times $0$ and $T$ in order to prove Theorem \ref{two point thm}; we will elaborate on this momentarily. This makes it significantly easier to perform frequency localization. The pertinent result in \cites{MST20} is Theorem 7.1. 
\begin{Th}\label{uncond LED}
Let $P$ be a stationary, asymptotically flat damped wave operator which satisfies the zero non-resonance, geometric, and weakly $\varepsilon$-damping dominant conditions for some $\varepsilon>0$. Additionally, assume that $\partial_t$ uniformly time-like while the constant time slices are uniformly space-like. Then, the estimate
\begin{align}
    \label{uncond LED est}
    \norm{u}_{LE^1}\lesssim\norm{Pu}_{LE^*}
\end{align} holds for all $u\in\mathcal{S}(\R^4)$. 
\end{Th}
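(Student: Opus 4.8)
The plan is to combine the three frequency-localized estimates --- the high frequency estimate (Theorem \ref{high freq thm}, in its reduced form Proposition \ref{high freq red}), the low frequency estimate (Theorem \ref{low freq thm}), and the medium frequency estimate (Theorem \ref{med freq thm}) --- by means of a partition of unity in the time frequency variable $\tau$, exactly as in \cites{MST20} and \cites{Kof22}. Fix cutoffs $\chi_{low}(\tau)$ supported in $\{|\tau|\leq 2\lambda_0\}$, $\chi_{high}(\tau)$ supported in $\{|\tau|\geq \lambda_1\}$, and $\chi_{med}(\tau)$ supported in the intermediate band, with $1=\chi_{low}+\chi_{med}+\chi_{high}$, where the thresholds $\lambda_0\ll 1\ll \lambda_1$ will be chosen at the end (with $\lambda_1$ large relative to the scaling parameter $\gamma$ and the constants $C(\lambda,\gamma)$ appearing in \eqref{reduced high freq}). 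Since $P$ is stationary, these Fourier multipliers in $\tau$ commute with $P$ up to nothing, so writing $u = u_{low}+u_{med}+u_{high}$ with $u_{\bullet}=\chi_{\bullet}(D_t)u$, each piece solves $Pu_{\bullet}=\chi_{\bullet}(D_t)(Pu)$, and $\norm{\chi_{\bullet}(D_t)(Pu)}_{LE^*}\lesssim \norm{Pu}_{LE^*}$ since $LE^*$ is (essentially) preserved by bounded Fourier multipliers in time after summing the dyadic pieces.

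The three pieces are then handled as follows. For $u_{med}$, Theorem \ref{med freq thm} with the weight $\varphi$ taken to grow and then flatten, after choosing $\delta$ small to absorb the right-hand error terms (using that $u_{med}$ is spectrally localized away from $\tau=0$ so that $\partial_t u_{med}\approx u_{med}$ in the relevant weighted norms, letting one trade powers of $\langle r\rangle^{-1}$), yields $\norm{u_{med}}_{LE^1}\lesssim \norm{Pu_{med}}_{LE^*}$; the exponential weights are comparable to constants on any compact set and bounded above everywhere, so they can be discarded. For $u_{low}$, Theorem \ref{low freq thm} gives $\norm{u_{low}}_{LE^1}\lesssim \norm{\partial_t u_{low}}_{LE^1_c}+\norm{Pu_{low}}_{LE^*}$, and since $u_{low}$ has time frequencies $\leq 2\lambda_0$ we have $\norm{\partial_t u_{low}}_{LE^1_c}\lesssim \lambda_0\norm{u_{low}}_{LE^1_c}\lesssim \lambda_0\norm{u}_{LE^1}$, which for $\lambda_0$ small is absorbed into the left-hand side once all three estimates are summed. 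For $u_{high}$, one invokes Proposition \ref{high freq red}: the reduced estimate \eqref{reduced high freq}, after the reductions listed there (localizing to $\{|x|\leq 2R_0\}$, removing time endpoints, adding back low spatial frequencies, and using Young's inequality for products on the bilinear term $\norm{Pv}_{LE^*_c}^{1/2}\norm{v}_{LE^1}^{1/2}$), yields $\norm{u_{high}}_{LE^1}\lesssim \norm{Pu_{high}}_{LE^*}+\gamma\lambda_1^{-1/2}\norm{u}_{LE^1}+C(\lambda,\gamma)\norm{u}_{L^2L^2[\text{compact}]}$; the last term is itself controlled by the medium/low estimates (it is a compactly supported $L^2L^2$ norm, hence dominated by $\norm{u}_{LE^1}$ at those frequencies) and the $\gamma\lambda_1^{-1/2}$ term is absorbed by taking $\lambda_1$ large.

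Summing the three and choosing $\lambda_0$ small, then $\lambda_1$ large (in that order, so the high frequency constants $C(\lambda,\gamma)$ are already fixed), absorbs all the $\norm{u}_{LE^1}$ error terms into the left-hand side and produces \eqref{uncond LED est}. The main obstacle --- and the only genuinely new point relative to \cites{MST20} --- is bookkeeping the damping and magnetic potential contributions in the overlap regions of the $\tau$-partition: one must check that the off-diagonal commutator errors generated when $\chi_{\bullet}(D_t)$ meets the lower-order terms of $P$ (the terms $iaD_t$ and the $A$, $V$ pieces) are genuinely lower order and land either in $\norm{Pu}_{LE^*}$ or in an absorbable $LE^1$-error; this is exactly where the weakly $\varepsilon$-damping dominant hypothesis has already done its work inside Theorem \ref{high freq thm}, so at the level of Theorem \ref{uncond LED} it amounts to checking that the damping, being a first-order $AF$ perturbation away from the region where its sign is used, is handled identically to the magnetic potential in all three regimes --- which the cited results already accommodate. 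Finally, the passage from \eqref{uncond LED est} to the two-point estimate \eqref{two point LED} (i.e. Theorem \ref{two point thm}) is the standard extension/truncation argument of \cites{MST20,Kof22}: extend a solution on $[0,T]$ to all of $\R$ with controlled energy at the endpoints, apply \eqref{uncond LED est}, and restrict back.
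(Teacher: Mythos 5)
Your proposal follows essentially the same route as the paper: decompose $u$ with a time-frequency partition of unity into low, medium, and high pieces, note that stationarity makes $P$ commute exactly with Fourier multipliers in $t$, and apply Theorems \ref{low freq thm}, \ref{med freq thm}, and \ref{high freq thm} to the respective pieces with the thresholds and the Carleman parameter $\delta$ tuned to absorb the error terms. Two small points are worth flagging. First, for the high-frequency piece you should apply Theorem \ref{high freq thm} directly (to the translate $Q_3u(\cdot - T/2)$ on $[-T/2,T/2]$, then send $T\to\infty$), rather than invoking the reduced estimate of Proposition \ref{high freq red}: the parameter $\lambda$ there is a \emph{spatial} frequency cutoff internal to the proof of Theorem \ref{high freq thm}, and the reduction in that proposition runs from the full theorem down to \eqref{reduced high freq}, not the other way. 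Once Theorem \ref{high freq thm} is applied, Plancherel on the time-frequency localization $|\tau|>\tau_1$ turns the error term $\norm{\langle x\rangle^{-2}Q_3u}_{LE}$ into $\tau_1^{-1}\norm{Q_3u}_{LE^1}$, which is then absorbed for $\tau_1$ large; no extra $\norm{u}_{L^2L^2}$ term survives. Second, the closing worry about "off-diagonal commutator errors" when the $\tau$-cutoffs meet the lower-order terms $iaD_t$, $A$, $V$ is moot --- you already observed correctly that, since $P$ is stationary, $\chi_\bullet(D_t)$ commutes with $P$ \emph{exactly}, including with these lower-order pieces, so there are simply no such errors to track. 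With those corrections your argument matches the paper's proof, including the crucial observation that $\delta$ in Theorem \ref{med freq thm} must be chosen last, after $\tau_0$ and $\tau_1$, and the separate absorptions for each $Q_j$ piece.
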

\begin{proof}
 We will utilize a time-frequency partition of unity. Let $0<\tau_0\ll 1$ and $\tau_1\gg 1$, which will be chosen with more precision shortly. Then, we can write \begin{align*}u&=\chi_{|\tau|< \tau_0}(D_t)u+\chi_{\tau_0< |\tau|< \tau_1}(D_t)u+\chi_{ |\tau|> \tau_1}(D_t)u=:Q_1u+Q_2u+Q_3 u.
 \end{align*}  Since $P$ is stationary, it commutes with each $Q_j,$ and so it suffices to show that 
   \begin{equation}\label{freq err}
   \norm{Q_ju}_{LE^1}\lesssim \norm{Pu}_{LE^*},\qquad j=1,2,3.
  \end{equation}
  
 First, we apply Theorem \ref{low freq thm} to $Q_1u$ and appeal to Plancherel's theorem in order to obtain that
 \begin{align*}
     \norm{Q_1u}_{LE^1}\lesssim \norm{\partial_t (Q_1u)}_{LE^1_c}+\norm{P(Q_1 u)}_{LE^*}\lesssim \tau_0\norm{Q_1u}_{LE^1_c}+\norm{P u}_{LE^*}.
 \end{align*}
 If $\tau_0$ is sufficiently small, then we may absorb the error term on the upper-hand side into lower-bound side, which provides (\ref{freq err}) for $j=1.$
 
  We proceed similarly with $Q_2u$ via Theorem \ref{med freq thm}:
 \begin{align*}
    &\norm{(1+\varphi_+'')^{1/2}e^\varphi\left(\nabla Q_2u,\inprod{r}^{-1}(1+\varphi')Q_2u\right)}_{LE}+\norm{(1+\varphi')^{1/2}e^\varphi\partial_t Q_2u}_{LE}
   \\ 
   &\lesssim \norm{e^\varphi P(Q_2u)}_{LE^*}
    +\delta\left(\norm{(1+\varphi')^{1/2}e^\varphi Q_2u}_{LE}+\norm{\inprod{r}^{-1}(1+\varphi_+'')^{1/2}(1+\varphi')e^\varphi \partial_t Q_2u}_{LE}\right)\\
    &\lesssim  \norm{e^\varphi Pu}_{LE^*}+\frac{\delta}{\tau_0}\norm{(1+\varphi')^{1/2}e^\varphi \partial_t Q_2u}_{LE}+\delta\tau_1 \norm{\inprod{r}^{-1}(1+\varphi_+'')^{1/2}(1+\varphi')e^\varphi Q_2u}_{LE}.
\end{align*}
By choosing $\delta$ sufficiently small, last two terms absorb into the left-hand side. Since $\varphi$ is bounded and $\varphi'\geq 0$, we obtain (\ref{freq err}) for $j=2.$
 
 Finally, we apply Theorem \ref{high freq thm} to $Q_3u(t-T/2)$:
 $$\norm{Q_3u}_{LE^1[-T/2,T/2]}\lesssim\norm{\partial (Q_3u)(-T/2)}_{L^2}+\norm{\inprod{x}^{-2}u}_{LE[-T/2,T/2]}+\norm{P(Q_3u)}_{LE^*[-T/2,T/2]}.$$ 
 Taking the limit as $T\rightarrow\infty$ and then applying Plancherel's theorem give that 
 $$\norm{Q_3u}_{LE^1}\lesssim \tau_1^{-1}\norm{Q_3u}_{LE^1}+\norm{Pu}_{LE^*}.$$ If $\tau_1$ is large enough, then the error term on the right absorbs into the left, giving (\ref{freq err}) for $j=3.$
\end{proof}
We underscore how important it is that $\delta$ may be chosen arbitrarily in Theorem \ref{med freq thm}: It allowed for compatibility with the high and low frequency estimates regardless of how high or low the frequency thresholds became, respectively (so long as they were away from zero and infinity).

 As in Section 7 of \cites{MST20}, one proves that Theorem \ref{uncond LED} implies Theorem \ref{two point thm} by fixing $u$ and constructing a function $v$ which matches the Cauchy data of $u$ at times $0$ and $T$ (and satisfies an appropriate bound) which allows one to apply (\ref{uncond LED est}) to $u-v.$ This construction is performed using a partition of unity on the support of $u[0]$, $u[T]$, and $Pu.$ In particular, one splits into an interior region $\{|x|<2R_0\}$ and an exterior region $\{|x|>R_0\}$. The damping is non-problematic in the interior and is small in the exterior (as with the other lower-order terms). The latter fact is important since a time reversal argument is used in the exterior \textit{alone}, avoids turning the damping into a driving force.
 
 In the interior, one utilizes a unit time interval partition of unity $\{\chi_j\}$ and analyzes the equations  \newline\noindent $Pv_j=\chi_j f,$  matching the data at times $0$ and $T$ with the first and last of elements in the partition, respectively. One then generates the desired approximate of $u$ via $\sum \chi_jv_j.$ In the exterior region, one chooses an appropriate small $AF$ perturbation of $\Box$ which matches $P$ in the exterior. If one considers the same differential equation (same data and forcing) but replaces $P$ by the perturbation, one obtains good bounds via local energy decay. By truncating the solution appropriately to $|x|>R_0$ and $t<T,$ one obtains the desired approximate in the exterior. 
  \subsection{An Energy Dichotomy}\label{dichot section}
Here, we apply Theorem \ref{two point thm} in order to prove the energy dichotomy present in Theorem \ref{energy dich}. Very little deviation is needed from the strategy given in \cites{MST20}, although we provide a proof to make this self-contained. One deviation is that we must take advantage of the damping condition. In particular, we must use the damping to absorb time derivative error terms for $|x|\leq 2R_0$, outside of which we may use asymptotic flatness (for space derivative terms, one can use Young's inequality for products, at the expense of shrinking $\varepsilon$). 
 \begin{proof}    
  As mentioned in Section \ref{ee defs}, the energy associated to $P_0$ is not coercive, so we will instead symmetrize. To that end, split $P$ and $P_0$ into self- and skew-adjoint parts $$P=P^s+P^a,\qquad P_0=P_0^{s}+P_0^a,$$ respectively, and define the energy of the symmetric part of $P$ as 
  $${E}^s[\boldsymbol{u}](t)=\int\limits_{\R^3}P_0^s u\bar{u}-g^{00}|\partial_t u|^2\, dx.$$ The explicit expressions for these operator splittings are not so important, but their symmetric/skew-symmetry properties will be convenient. 
  
  Due to the symmetry of $P_0^s$, integration by parts yields that \begin{equation}\label{symm energy exp}E^s[\boldsymbol{u}](t)=E^s[\boldsymbol{u}](0)+2\real \int\limits_0^t\int\limits_{\R^3} \partial_s u\overline{P^su}\, dxds.\end{equation} We must analyze the last term. As has been the theme, one may split space into an interior region (where damping is positively signed and the damping-dominance holds) and an exterior region (where we may apply asymptotic flatness). It follows from the operator splitting; H\"older's inequality; Young's inequality for products (with constant $\varepsilon^k$ for any $k\in (0,1)$); the assumed $\varepsilon$-damping dominance with respect to $A,\ \nabla A,$ and $V$; and asymptotic flatness that
  \begin{align} \label{alm cons ee}
   \int\limits_0^t\int\limits_{\R^3} \partial_s u\overline{P^su}\, dxds&=\int\limits_0^t\int\limits_{\R^3} \partial_s u\overline{f}\, dxds-   \int\limits_0^t\int\limits_{\R^3} \partial_s u\overline{P^au}\, dxds\\
   &=\int\limits_0^t\int\limits_{\R^3} \partial_s u\overline{f}\, dxds  -C(\varepsilon)\int\limits_0^t\int\limits_{B_{2R_0}(0)} a|\partial_s u|^2\, dxds+D(\varepsilon, \textbf{c})\norm{u}_{LE^1}^2. \nonumber
   \end{align}
   In the above, $C(\varepsilon)= 1-c\varepsilon^{1-k}>0$ for fixed $c$ (shrinking $\varepsilon$ if necessary guarantees the positivity) and
  $D(\varepsilon, \textbf{c})=\mathcal{O}(\max(\varepsilon^{k}, \textbf{c}))$.

   Pairing this with (\ref{almost coerc}) and dropping the damping term gives that 
  \begin{align*}
      \norm{\partial u}_{L^\infty L^2[0,T]}^2
      \lesssim\norm{\partial u(0)}_{L^2}^2+\norm{\partial^{\leq 1}u(T)}_{L^2_c}^2+D(\varepsilon, \textbf{c})\norm{u}_{LE^1[0,T]}^2+\int\limits_0^T\int\limits_{\R^3} |\partial_t u||f|\, dxdt. 
  \end{align*} 
   The Schwarz inequality and H\"older's inequality imply that 
    \begin{align}\label{energy dich starting est}
      \norm{\partial u}_{L^\infty L^2[0,T]}
      \lesssim\norm{\partial u(0)}_{L^2}+\norm{\partial^{\leq 1}u(T)}_{L^2_c} +(\delta+D(\varepsilon, \textbf{c}))\norm{u}_{LE^1[0,T]}+\delta^{-1}\norm{f}_{LE^*+L^1L^2[0,T]},
  \end{align} 
  where $\delta>0$ is arbitrary.
   Now, we will make use of the two-point local energy estimate (\ref{two point LED}), valid as the conditions to apply Theorem \ref{two point thm} hold by assumption.
  By combining (\ref{energy dich starting est}) with (\ref{two point LED}) and choosing $\delta$ sufficiently small, we obtain that $$\norm{\partial u(T)}_{L^2}^2\lesssim\norm{\partial u(0)}_{L^2}^2+\norm{\partial^{\leq 1}u(T)}_{L^2_c}^2+\norm{f}^2_{LE^*+L^1L^2[0,\infty)}.$$ On the other hand, (\ref{two point LED}) directly gives that $$ \norm{\partial^{\leq 1}u(t)}_{L^2L^2_c[0, T]}^2\lesssim\norm{u}_{LE^1_c[0,T]}^2\lesssim \norm{\partial u(0)}_{L^2}^2+\norm{\partial u(T)}_{L^2}^2+\norm{f}^2_{LE^*+L^1L^2[0,\infty)},$$ and so  \begin{align}\label{dich eqn 1}
     \norm{\partial u}_{L^2L^2[0,T]}^2 \lesssim \norm{\partial u(T)}_{L^2}^2+(T+1)\left(\norm{\partial u(0)}_{L^2}^2+\norm{f}^2_{LE^*+L^1L^2[0,\infty)}\right).\end{align}
  Notice that if we call  $E(T)=\norm{\partial u}_{L^2L^2[0,T]}^2,$  then (\ref{dich eqn 1}) gives that \begin{align}\label{dich eqn 2}E'(T)=\norm{\partial u(T)}_{L^2}^2\geq \alpha E(T)-(T+1)\left(\norm{\partial u(0)}_{L^2}^2+\norm{f}^2_{LE^*+L^1L^2[0,\infty)}\right),\end{align} where $\alpha>0$ is a constant. 
  
  We will consider two cases. 
  First, we assume that  $$E(T)<2\alpha^{-1}(T+1)\left(\norm{\partial u(0)}_{L^2}^2+\norm{f}^2_{LE^*+L^1L^2[0,\infty)}\right)$$ for all $T>0$. In such a case, we have that, in particular, $$T^{-1}
  \norm{\partial u}_{L^2L^2[0,T]}^2\lesssim \norm{\partial u(0)}_{L^2}^2+\norm{f}^2_{LE^*+L^1L^2[0,\infty)}.$$ By the mean-value theorem for integrals, there exists a sequence $(T_j)$ so that $T_j\rightarrow \infty$ as $j\rightarrow\infty$, and $$\norm{\partial u(T_j)}_{L^2}^2\lesssim \norm{\partial u(0)}_{L^2}^2+\norm{f}^2_{LE^*+L^1L^2[0,\infty)}.$$ We may use (\ref{two point LED}) and let $T_j\rightarrow\infty$ to conclude that local energy decay holds. 
 
 Next, we consider if \begin{align}\label{dich eqn 3}
     E(T')\geq2\alpha^{-1}(T'+1)\left(\norm{\partial u(0)}_{L^2}^2+\norm{f}^2_{LE^*+L^1L^2[0,\infty)}\right)
 \end{align} for some $T'>0$. Then, $E(T)$ must bound (\ref{dich eqn 3}) from above for all $T\geq T'$, since (\ref{dich eqn 2}) implies that $E(T)$ is increasing for all $T>T'.$ 
By applying an integrating factor argument to the differential inequality (\ref{dich eqn 2}) and using (\ref{dich eqn 3}), we get that 
 $$E(T)\gtrsim_{T'} e^{\alpha T}\left(\norm{\partial u(0)}_{L^2}^2
 +\norm{f}^2_{LE^*+L^1L^2[0,\infty)}\right),\qquad T\geq T',\qquad T'\gg 1.$$ Combining this with (\ref{dich eqn 1}) gives the exponential growth for large enough $T$.
 \end{proof}

\section{Resolvent Theory and Local Energy Decay}\label{resolvent section}

 In this section, we will introduce the spectral theory required to prove Theorem \ref{LED thm}. This follows from the resolvent formalism introduced in \cites{MST20} and the corresponding scattering theory. We will summarize the relevant parts of their work here and provide details that were either omitted or are instructive to repeat. In particular, the results necessary to prove Theorem \ref{LED thm} are based on the same frequency estimates present in both \cites{MST20} and here, so our work requires little deviation. Throughout this section, we assume that $P$ is stationary. 

Consider $Pu=0.$ One arrives at the \textit{stationary problem} by studying ``mode solutions'' of the form $u(t,x)=u_\omega(x)e^{i\omega t}$, where $\omega\in\C$ (equivalently, one replaces $D_t$ by $\omega$). Plugging such $u$ into the given homogeneous equation generates the \textit{stationary }equation $$P_\omega u_\omega=0,\qwhere P_\omega=\Delta_{g,A}+W(x,D_x)+\omega B(x,D_x)+g^{00}\omega^2,$$
and \begin{align*}
    \Delta_{g,A}&=(D_j+A_j)g^{jk}(D_k+A_k),\\
    W(x,D_x)&=A_0g^{0j}(D_j+A_j)+(D_j+A_j)g^{j0}A_0+(A_0)^2g^{00}+V,\\
        B(x,D_x)&=g^{0j}(D_j+A_j)+(D_j+A_j)g^{j0}+2A_0g^{00}+ia.
\end{align*}
The \textit{resolvent} $R_\omega$ is defined as the inverse of $P_\omega$ when such an inverse exists. More explicitly, if we consider the homogeneous Cauchy problem $$Pu=0,\qquad u(0)=0,\qquad -g^{00} \partial_tu(0)=f,$$ then we may formally define $R_\omega$ via the Fourier-Laplace transform of $u$, i.e. $$R_\omega f=\int\limits_0^\infty e^{-i\omega t} u(t)\, dt=:\mathcal{F}_{t\rightarrow\omega} (\mathbbm{1}_{[0,\infty)}(t) u), \qquad\omega\in\C.$$ One may check via formal integration by parts that both definitions of $R_\omega$ are consistent. In this section, we will take $f$ to be in either $L^2$ or $\lecal^*$, and it will be clear from context which is the case. It remains to make rigorous sense of $R_\omega$ as a well-defined bounded operator.

From the global energy bounds and Gronwall's inequality, it follows that $u$ satisfies the crude estimate \begin{equation}\label{a priori}\norm{\partial u(t)}_{L^2}\lesssim e^{c t} \norm{f}_{L^2},\qquad c\geq 0.
\end{equation}
Using (\ref{a priori}) and the Minkowski integral inequality, we obtain that 
\begin{align}\label{init resolvent est pf 1}
\norm{R_\omega f}_{\dot{H}^1}\leq \int\limits_0^\infty e^{\imag  \omega t}\norm{\nabla u(t,\cdot)}_{L^2}\, dt\lesssim \int\limits_0^\infty e^{\imag  \omega t}e^{c t}\norm{f}_{L^2}\, dt \lesssim|\imag  \omega+c|^{-1}\norm{f}_{L^2},\qquad \imag\omega+c<0.
\end{align} Meanwhile, integrating by parts once provides that  $\omega R_\omega f=-i\mathcal{F}_{t\rightarrow\omega}(\mathbbm{1}_{[0,\infty)}\partial_t u).$ Taking the $L^2$ norm of the above and performing the same work as in (\ref{init resolvent est pf 1}) yields an identical upper bound. Combining these estimates together gives the inequality
\begin{align}\label{unif res strip}
     \norm{R_\omega f}_{\dot{H}_\omega^1}\lesssim|\imag  \omega+c|^{-1}\norm{f}_{L^2},\qquad   \imag\omega+c<0.
\end{align}

Hence, we may validly define the resolvent as a bounded operator from $L^2$ to  $\dot{H}_\omega^1$, provided that $\omega$ is in the range given in (\ref{unif res strip}). Notice that if the uniform energy bound (\ref{unif energy}) holds, then the resolvent is holomorphic in the lower half-plane $\mathcal{H}:=\{\omega\in\C: \imag\omega<0\}$ and satisfies the bound 
\begin{align}\label{unif energy res bd}\norm{R_\omega}_{L^2\rightarrow\dot{H}^1_\omega}\lesssim |\imag\omega|^{-1},\qquad\omega\in\mathcal{H}.\end{align} If the uniform energy bound does not hold, then one is only guaranteed meromorphic continuation to $\mathcal{H}.$ Due to this tie-in with uniform energy bounds, we will refer to (\ref{unif energy res bd}) as the \textit{uniform energy resolvent bound}. There is also an analogous resolvent bound to local energy decay, which we state as a theorem and will not prove here; see the proof of Theorem 2.3 in \cites{MST20} for more. It is largely a consequence of Plancherel's theorem, along with utilization of facts which we will discuss after the statement of the theorem. The damping plays no meaningful role here.
\begin{Th}\label{energy res equiv}
Local energy decay holds for a stationary damped wave operator $P$ if and only if $R_\omega$ satisfies the \textit{local energy resolvent bound}
\begin{align}\label{le res bd}
\norm{R_\omega}_{\lecal^*\rightarrow\lecal^1_{\omega}}\lesssim 1,\qquad\omega\in\mathcal{H}.
\end{align}
\end{Th}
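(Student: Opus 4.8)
\textbf{Proof proposal for Theorem \ref{energy res equiv}.}

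The plan is to establish the equivalence by passing through the Fourier--Laplace transform in $t$, exploiting stationarity so that $P$ commutes with the time-frequency localization and, after transforming, the equation $Pu=f$ becomes the stationary equation $P_\omega u_\omega = \hat f_\omega$. I would prove the two implications separately, with the forward direction (local energy decay $\Rightarrow$ resolvent bound) being the more routine half and the converse requiring a contour/limiting argument to recover the time-domain estimate.

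For the forward direction, suppose \eqref{LED} holds (uniformly in $T$, hence in the limit $T\to\infty$). Fix $\omega\in\mathcal{H}$ and $f\in\lecal^*$. Consider the solution $u$ to the homogeneous Cauchy problem $Pu=0$, $u(0)=0$, $-g^{00}\partial_t u(0)=f$ whose Fourier--Laplace transform defines $R_\omega f$; more precisely, to produce a forcing term I would instead look at the inhomogeneous problem $Pu = e^{i\omega t} f$ with zero data, whose solution satisfies $\mathcal{F}_{t\to\omega}(\mathbbm 1_{[0,\infty)} u) = R_\omega f$ modulo harmless contributions (one uses $\imag\omega<0$ to ensure the relevant integrals converge and the a priori bound \eqref{a priori} to justify the manipulations). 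Applying \eqref{LED} on $[0,T]$ with this forcing, the right side is controlled by $\norm{e^{i\omega t}f}_{LE^*[0,T]}\lesssim \norm{f}_{\lecal^*}\,( \text{something like } |\imag\omega|^{-1/2})$, while the left side dominates $\norm{u}_{LE^1[0,T]}$. Then Plancherel's theorem in $t$ converts the space-time $LE^1$ norm of $\mathbbm 1_{[0,\infty)}u$ into the spatial $\lecal^1_\omega$ norm of $R_\omega f$ evaluated on the line $\imag\omega = $ const, uniformly down to the real axis; combined with the $|\omega|$-weighted bound obtained by differentiating in $t$ (as in the derivation of \eqref{unif res strip}), this yields \eqref{le res bd}. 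The key structural inputs are stationarity (so $R_\omega$ is genuinely the symbol of the evolution) and the $T$-independence of the constant in \eqref{LED}.

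For the converse, assume \eqref{le res bd}. Given data $\partial u(0)\in L^2$ and forcing $Pu=f\in LE^*+L^1L^2$, I would represent $u$ (restricted to $[0,\infty)$, or after an extension/truncation in time) via the inverse Fourier--Laplace transform: $\mathbbm 1_{[0,\infty)}(t)u(t) = \frac{1}{2\pi}\int_{\imag\omega = -\nu} e^{i\omega t} R_\omega\big(\widehat{F}(\omega)\big)\,d\,\real\omega$ for $\nu>0$ small, where $\widehat F(\omega)$ packages the transformed forcing and the Cauchy data (the data enters as a boundary term, effectively an additional $\delta$-type forcing at $t=0$, producing a term like $R_\omega(g^{00}\partial_t u(0))$ plus a lower-order piece). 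Taking the $LE^1$ norm in $(t,x)$ and applying Plancherel in $t$ reduces the estimate to $\int_{\real\omega} \norm{R_\omega \widehat F(\omega)}_{\lecal^1_\omega}^2\,d\real\omega \lesssim \int_{\real\omega}\norm{\widehat F(\omega)}_{\lecal^*}^2\,d\real\omega$, which is exactly \eqref{le res bd} integrated against Plancherel; one then lets $\nu\downarrow 0$, using the meromorphic continuation and the uniform bound \eqref{le res bd} up to the real axis to push the contour to $\{\imag\omega=0\}$ and recover the estimate on $[0,\infty)$ with $T$-independent constant. The $L^\infty L^2$ bound on $\partial u$ follows similarly by combining the resolvent bound with the energy identity, or is absorbed into the standard argument relating $LE^1$ control to uniform energy control via the equation.

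The main obstacle I anticipate is the rigorous handling of the contour-shifting / limiting argument near the real axis in the converse direction: one must justify moving $\{\imag\omega=-\nu\}$ to $\{\imag\omega=0\}$ despite the resolvent only being \emph{a priori} defined (via \eqref{unif res strip}) in a half-plane $\imag\omega < -c$, which requires first invoking the meromorphic continuation of $R_\omega$ to all of $\mathcal H$ and then using \eqref{le res bd} (which presupposes continuation and holomorphy on $\mathcal H$ — the absence of poles is implicit in the boundedness) to control the $\nu\to 0$ limit. Properly, \eqref{le res bd} being assumed as a hypothesis already encodes holomorphy in $\mathcal H$, so the real subtlety is the boundary behavior and the interchange of limits with the Plancherel integral; this is where I would cite the argument in the proof of Theorem 2.3 of \cites{MST20} rather than reproduce it, noting as the excerpt does that the damping term plays no essential role since it is absorbed either as a well-signed interior term or a small $AF$ perturbation in the exterior.
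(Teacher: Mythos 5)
Your proposal follows the same route the paper points to: the paper itself does \emph{not} prove Theorem~\ref{energy res equiv}, explicitly deferring to the proof of Theorem~2.3 in \cites{MST20} and remarking only that it is ``largely a consequence of Plancherel's theorem.'' Your Fourier--Laplace/Plancherel outline, with stationarity used to commute $P$ past time-frequency localizations and the converse handled by an inverse transform representation plus contour limiting, is exactly the mechanism behind that result, and your decision to cite \cites{MST20} for the contour-limit details is consistent with what the paper does.

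That said, your forward direction as written contains a concrete error. You take $Pu=e^{i\omega t}f$ with zero data and claim $\norm{e^{i\omega t}f}_{LE^*[0,T]}\lesssim|\imag\omega|^{-1/2}\norm{f}_{\lecal^*}$. For $\omega\in\mathcal H$ we have $\imag\omega<0$, so $|e^{i\omega t}|=e^{|\imag\omega|t}$ \emph{grows} on $[0,T]$ and the asserted $T$-independent bound is false; the correct factor is $\bigl(\frac{e^{2|\imag\omega|T}-1}{2|\imag\omega|}\bigr)^{1/2}$, which diverges as $T\to\infty$. Relatedly, the solution with zero initial data is not $e^{i\omega t}R_\omega f$ --- the mode solution $v=e^{i\omega t}R_\omega f$ has $v(0)=R_\omega f\neq 0$, so the identity ``$\mathcal F_{t\to\omega}(\mathbbm 1_{[0,\infty)}u)=R_\omega f$ modulo harmless contributions'' does not hold for the zero-data IVP you wrote. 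Both issues are repairable in the same way the paper handles Case~I of Theorem~\ref{LED thm}: take $v=e^{i\omega t}R_\omega f$ (nonzero data), apply the time-domain estimate on a backward interval $[-T,0]$ via a time shift $\tilde v(t)=v(t-T)$ so the exponentials decay, and send $T\to\infty$ --- the data term then carries a factor $e^{T\imag\omega}\to0$ and the common factor $\bigl(\frac{e^{2T\imag\omega}-1}{2\imag\omega}\bigr)^{1/2}$ cancels cleanly to yield $\norm{R_\omega f}_{\lecal^1_\omega}\lesssim\norm f_{\lecal^*}$. (Equivalently one can stay on $[0,T]$ but must normalize by $e^{|\imag\omega|T}$ before taking the limit.) With that correction, your outline matches the intended argument.
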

 We observe that if the uniform energy resolvent bound holds, then the local energy resolvent bound holds for $\imag\omega\lesssim -1$, since  \begin{align}\label{unif to le}
     \norm{R_\omega f}_{\mathcal{LE}_\omega^1}&\lesssim
    \norm{ R_\omega f}_{\dot{H}^1}+|\omega|\norm{R_\omega f}_{L^2}=\norm{R_\omega f}_{\dot{H}_\omega^1}\lesssim |\imag\omega|^{-1}\norm{f}_{L^2}\lesssim \norm{f}_{\mathcal{LE}^*}.
 \end{align} On the other hand (\ref{le res bd}) implies (\ref{unif energy res bd}); Fredholm theory implies that $R_\omega$ is bounded and holomorphic from $L^2$ to $\dot{H}^1_\omega$, and the precise operator norm bound can by obtained by splitting $f\in L^2$ as $$f=\chi_{<|\imag\omega|^{-1}}f+\chi_{>|\imag\omega|^{-1}}f,$$ then using (\ref{le res bd}) and (\ref{unif res strip}). Hence, the uniform energy and local energy resolvent bounds have close relation. 
 We previously alluded to two spectral obstructions to local energy decay (and hence local energy resolvent bounds), which we outline more precisely now.
 
Elements in the kernel of $P_\omega$ living in $L^2$ correspond to finite rank poles $\omega$ of $R_\omega$. Given that $P_\omega$ is an elliptic operator, such eigenfunctions live in $H^s$ for all $s\in\R$, yet the corresponding mode solution to $Pu=0$ must possess exponential growth in time since $\omega\in\mathcal{H}$. In particular, the initial energy is finite, yet both terms on the lower-bound side of the local energy decay estimate (\ref{LED}) are unbounded as $T\rightarrow\infty$. This behavior violates both uniform energy bounds and local energy decay. Since the eigenvalues have negative imaginary part, we will call these \textit{negative eigenfunctions}.
\begin{definition} 
A negative eigenfunction for $P$ is a non-zero $u_\omega\in L^2$ such that $P_\omega u_\omega=0$, with $\omega\in \mathcal{H}.$
\end{definition} 
The corresponding eigenvalues are isolated and contained within a relatively compact set in $\mathcal{H}$, which follows due to (\ref{unif res strip}) and the high frequency estimate (\ref{high freq est}).

Next, we notice that the local energy resolvent bound must hold uniformly up to the real line in order to obtain local energy decay, per Theorem \ref{energy res equiv}. To that end, we have another potential obstruction - a failure to obtain the continuous extension of $R_\omega$ to $\R$. First, we consider when one takes the limit as the spectral parameter approaches a non-zero real value.
\begin{definition}
An embedded resonant state for $P$ is a non-zero $u_\omega\in\lecal^1_\omega$, with $\omega\in\R\setminus\{0\}$ being the corresponding embedded resonance, satisfying the outgoing/Sommerfeld radiation condition
\begin{equation}\label{sommer}\norm{\inprod{x}^{-1/2}(\partial_r+i\omega)u_\omega}_{L^2(A_j)}\rightarrow 0\qquad\operatorname{ as }j\rightarrow\infty
\end{equation} such that $$P_\omega u_\omega=0.$$ 
\end{definition}
The outgoing radiation condition as stated here is a variant of the standard one, adapted to the dyadic structure of our spaces. It acts as a boundary condition at infinity to ensure unique solutions to the above problem for a fixed $\omega$.  Such states are subtle obstructions, since the corresponding mode solution does not generally have finite energy (as $u_\omega$ is only guaranteed to live in $\lecal^1_\omega$).  
However, as explained in \cites{MST20}, one may perform a truncation procedure and utilize normalized/Regge-Wheeler-type coordinates (see \cites{Tat13}) to produce functions $\chi_{>1}(t-r)e^{i\omega t}u_\omega$ whose energy exhibits a growth rate of $t^{1/2}$ yet whose image under $P$ lives in $L^1L^2.$  
Hence, they represent an explicit obstruction to uniform and local energy bounds. The non-existence of embedded resonant states is equivalent to (\ref{le res bd}) near the punctured real line and to a limiting absorption principle. In our context, the corresponding result is as follows.

\begin{Th}\label{res equiv}
Let $P$ be a stationary, asymptotically flat damped wave operator which satisfies the geometric control condition and is weakly $\varepsilon$-damping dominant for some $\varepsilon>0$. Additionally, assume that $\partial_t$ is uniformly time-like while the constant time slices are uniformly space-like.
For any $\omega_0\in\mathbb{R}\setminus\{0\}$, the following are equivalent:
\begin{enumerate}
    \item $\omega_0$ is not a resonance.
    \item The bound $$\norm{u}_{\lecal^1_{\omega_0}}\lesssim\norm{P_{\omega_0}u}_{\lecal^*}$$ holds for all $u\in\lecal^1_{\omega_0}$ satisfying the outgoing radiation condition (\ref{sommer}).
    \item The local energy resolvent bound (\ref{le res bd}) holds uniformly for $\omega\in\mathcal{H}$ near $\omega_0$, and the limit $$R_{\omega_0} f=\lim_{\mathcal{H}\ni\omega\rightarrow\omega_0}R_\omega f,\qquad f\in\lecal^*$$ converges strongly on compact sets and satisfies the outgoing radiation condition (\ref{sommer}).
\end{enumerate}
\end{Th}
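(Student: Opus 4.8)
The plan is to prove the equivalence of (1), (2), and (3) in Theorem~\ref{res equiv} by following the scheme of \cites{MST20}, exploiting that the only place the geometry and the damping enter beyond the abstract Fredholm formalism is through the high frequency and medium frequency estimates, both of which are already available in our setting. I would proceed cyclically: $(3)\Rightarrow(1)$ is essentially the definition (an embedded resonant state obstructs the continuous extension of $R_\omega$, since $R_\omega$ applied to data built from $u_{\omega_0}$ cannot converge in $\mathcal{LE}^1_{\omega_0}$), $(1)\Rightarrow(2)$ uses the uniqueness of outgoing solutions together with Fredholm theory, and $(2)\Rightarrow(3)$ is the limiting absorption principle, derived from a uniform bound on the truncated resolvent plus a compactness/uniqueness argument.

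The analytic heart is establishing, near a fixed $\omega_0\in\R\setminus\{0\}$, the uniform stationary estimate
\begin{equation*}
\norm{u}_{\mathcal{LE}^1_\omega}\lesssim\norm{P_\omega u}_{\mathcal{LE}^*}
\end{equation*}
for $\omega\in\mathcal{H}$ near $\omega_0$ (with the constant independent of $\omega$), for $u$ satisfying the appropriate radiation/decay condition; this is the stationary reformulation of the local energy resolvent bound~(\ref{le res bd}) localized near $\omega_0$. First I would assemble this from the frequency-localized estimates: the high frequency estimate~(\ref{high freq est}) (equivalently Theorem~\ref{uncond LED} restricted to frequencies $\approx\omega_0$), the medium frequency Carleman estimate of Theorem~\ref{med freq thm} applied at the single scale $|\tau|\approx|\omega_0|$ with $\delta$ chosen small, and elliptic estimates away from the characteristic set. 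Since $\omega_0$ is bounded away from zero and infinity, the low frequency/zero-resonance analysis is irrelevant here. The outgoing radiation condition~(\ref{sommer}) is what selects the correct solution in the limit $\imag\omega\to 0^-$ and is preserved under the limit because the exterior region is a small $AF$ perturbation of flat space, where the radiation condition is standard.

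For $(1)\Rightarrow(2)$: $P_{\omega_0}$ on the outgoing class is a Fredholm operator of index zero (by the same elliptic-plus-exterior parametrix construction as in \cites{MST20}), so injectivity — precisely the statement that there is no nonzero outgoing solution of $P_{\omega_0}u=0$, i.e. no resonance — upgrades to the invertibility bound in (2). For $(2)\Rightarrow(3)$: one first shows the bound in (2) self-improves to a uniform bound for $\omega\in\mathcal{H}$ near $\omega_0$ by a standard contradiction argument (a sequence of near-counterexamples, normalized, would converge after extracting — using local compactness of $\mathcal{LE}^1$ into $L^2_{loc}$ and the exterior decay coming from asymptotic flatness and the radiation condition — to a nonzero outgoing solution at $\omega_0$, contradicting (2)); the uniform bound then yields strong convergence of $R_\omega f$ on compact sets as $\mathcal{H}\ni\omega\to\omega_0$, and the limit inherits~(\ref{sommer}).

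The main obstacle I anticipate is the compactness/uniqueness step inside $(2)\Rightarrow(3)$: extracting a genuine outgoing solution at $\omega_0$ from an approximating sequence requires controlling the solutions uniformly in the exterior region $\{|x|>R_0\}$ well enough to pass to the limit in the radiation condition, and controlling them in the interior well enough that the weak limit is nonzero. This is exactly where one needs that the damping and magnetic potential are benign perturbations in the exterior (guaranteed by the growth conditions in the $AF$ topology) and that the weakly $\varepsilon$-damping dominant hypothesis feeds, via Theorem~\ref{high freq thm}, into the requisite frequency-localized a priori bound near $\omega_0$; once the uniform stationary estimate is in hand, the remaining arguments are the abstract ones from \cites{MST20} and require no new ideas.
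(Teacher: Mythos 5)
Your proposal is structurally sound and would likely succeed, but it takes a genuinely different (and more laborious) route than the paper. The paper's proof is perturbative in a more clever way: it first observes that in the reduced setting $A,V\equiv 0$ and $a\equiv 0$ on $\{|x|>2R_0\}$ — the setting of \cites{Kof22} — full local energy decay is already known, so by Theorem~\ref{energy res equiv} the local energy resolvent bound~(\ref{le res bd}) holds outright and all three statements of the theorem are trivially true for this model operator. For general $P$, the paper then introduces the reference operator $\widetilde{Q}=\widetilde{P}+\chi_{|x|<R_0}iaD_t$ (where $\widetilde{P}$ is the principal part of $P$), which falls into that reduced setting and hence has a resolvent $\widetilde{R}_\omega$ holomorphic on $\mathcal{H}$ and continuous up to $\R$. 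Seeking $u=\widetilde{R}_\omega g$ recasts $P_\omega u=f$ as the Fredholm equation $(I+Q_\omega\widetilde{R}_\omega)g=f$ with $Q_\omega=P_\omega-\widetilde{Q}_\omega$; since $Q_\omega\widetilde{R}_\omega$ is compact on $\lecal^*$, holomorphic in $\mathcal{H}$, continuous up to $\R$, and $I+Q_\omega\widetilde{R}_\omega$ is invertible for $-\imag\omega\gg 1$, the analytic Fredholm theorem yields all three equivalences simultaneously. By contrast, you propose a cyclic chain $(3)\Rightarrow(1)\Rightarrow(2)\Rightarrow(3)$ built on a from-scratch elliptic-plus-exterior parametrix, a uniform stationary estimate assembled directly from the high and medium frequency estimates, and a compactness/contradiction argument for the limiting absorption step — this is closer to the original \cites{MST20} development. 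Both routes work, but the paper's is more economical precisely because it outsources the hard analysis to the already-established result of \cites{Kof22} for the reference operator, rather than rebuilding the parametrix and uniqueness argument; your plan also slightly understates that the frequency-localized estimates alone give only a conditional bound with compactly supported lower-order error terms (the paper explicitly notes that Theorem~\ref{med freq thm} does not rule out embedded resonances), so the error absorption there is exactly where the non-resonance hypothesis must enter, which the Fredholm reduction packages cleanly.
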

We sketch the proof, as it is similar to that given in \cites{MST20} for their version of the result (which is Proposition 2.5 in their work); only minor alterations are necessary to account for the damping and the lack of the non-trapping hypothesis. The result is perturbative of the work in \cites{Kof22}, just as the version in \cites{MST20} was perturbative of the case where $P=\Box_g$. 
\begin{proof}
First, consider when $A,V\equiv 0$ and $a\equiv 0$ for $|x|>2R_0.$ In this scenario, we are in the setting of \cites{Kof22} and obtain full local energy decay. By Theorem \ref{energy res equiv}, we have the local energy resolvent bound (\ref{le res bd}), in which case all three statements in Theorem \ref{res equiv} hold. 

In general, let $\widetilde{P}$ denote the principal part of $P$, and consider $\widetilde{Q}=\widetilde{P}+\chi_{|x|<R_0}iaD_t$. Note that our starting case applies to $\widetilde{Q}$, hence it satisfies local energy decay. Let $\widetilde{R}_\omega$ denote the resolvent of $\widetilde{Q}_\omega$, which is holomorphic in $\mathcal{H}$ and continuous up to the real line. We seek a solution to $P_\omega u=f$ of the form $u=\widetilde{R}_\omega g$ for some $g$ (that is, $u$ is in the range of the resolvent and hence outgoing). If $Q_\omega=P_\omega-\widetilde{Q}_\omega$, then $$P_\omega u=f\qquad\text{ if and only if }\qquad (I+Q_\omega \widetilde{R}_\omega)g=f.$$
The family of operators $Q_\omega\widetilde{R}_\omega$ is compact from $\lecal^*$ to $\lecal^*$, 
holomorphic in the lower-half plane, and continuous up to the real line. 
 Further, $I+Q_\omega \widetilde{R}_\omega$ is invertible for $-\imag\omega\gg 1$ by a Neumann series argument; to see this, note that $Q_\omega$ is first-order, then use a similar estimate to (\ref{unif to le}). Thus, $I+Q_\omega \widetilde{R}_\omega$ is a family of zero-index Fredholm operators. The conclusion of the theorem follows from the analytic Fredholm theorem and its consequences.
\end{proof}
Next, we consider when the spectral parameter approaches 0. In this case, we must replace the outgoing radiation condition (which is not meaningful when $\omega=0$) with a new condition which still limits the asymptotics of the functions.
\begin{definition}
A \textit{zero resonant state} for $P$ is a non-zero $u\in \lecal_0$ such that $P_0u=0$. If, in addition, $u\in L^2,$ then we call $u$ a \textit{zero eigenfunction}.
\end{definition}
Such resonant states are annihilated by $P$ while having finite energy. However, they also possess an unbounded $LE^1$ norm as $T\rightarrow\infty$, which violates local energy decay, as well as two-point local energy decay. As shown in \cites{MST20} (see Proposition 2.10 in their work), one also has an analogous characterization of a resonance at zero to that of the non-zero real resonances in Theorem \ref{res equiv}. We record their result without proof, for reasons which immediately follow the statement of the theorem.
\begin{Th}\label{zero res equiv}
Let $P$ be an asymptotically flat damped wave operator satisfying the geometric control condition and $\partial_t$ be uniformly time-like. Then, the following are equivalent:
\begin{enumerate}
    \item $0$ is not a resonance.
    \item The zero resolvent bound  \indent\begin{align}\label{weighted low freq}\norm{u}_{\lecal^1}\lesssim\norm{P_{0}u}_{\lecal^*}\end{align} holds for all $u\in\lecal^1_{0}$. 
    \item The zero non-resonance condition (\ref{zero res}) holds.
    \item The local energy resolvent bound (\ref{le res bd}) holds uniformly for $\omega\in\mathcal{H}$ near $0$, and the limit $$R_{0} f=\lim_{\mathcal{H}\ni\omega\rightarrow 0}R_\omega f,\qquad f\in\lecal^*$$ converges strongly on compact sets. 
    \item The stationary local energy decay estimate $$\norm{u}_{LE^1[0,T]}+\norm{\partial u}_{L^\infty L^2[0,T]}\lesssim\norm{\partial u(0)}_{L^2}+\norm{\partial u(T)}_{L^2}+\norm{\partial_t u}_{LE[0,T]}+\norm{Pu}_{LE^*+L^1L^2[0,T]},$$ with the implicit constant being independent of $T$.
\end{enumerate}
\end{Th}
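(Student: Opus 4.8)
The plan is to establish the five-way equivalence by closing a cycle, say $(1)\Leftrightarrow(2)\Leftrightarrow(3)$ among the purely stationary conditions, then $(3)\Rightarrow(4)\Rightarrow(5)\Rightarrow(1)$. Throughout, the damping enters $P$ only through the operator $B(x,D_x)$ and so is absent from $P_0$; consequently the whole argument is a cosmetic adaptation of the proof of Proposition 2.10 in \cites{MST20}, with the extra lower-order terms $A,V$ handled exactly as in \cites{Kof22} and the absence of a non-trapping hypothesis treated as in the proof of Theorem \ref{res equiv} above.

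First I would record the unconditional weighted elliptic bound
$$\norm{u}_{\lecal^1}\lesssim\norm{P_0u}_{\lecal^*}+\norm{u}_{L^2_c},\qquad u\in\lecal^1_0,$$
which follows from the analogous estimate for the flat Laplacian on $\R^3$ together with the fact that $P_0-\Delta$ is a small asymptotically flat perturbation absorbed outside a compact set; this is the time-frequency-zero instance of the machinery behind Theorem \ref{low freq thm} and needs no spectral hypothesis. Since $\lecal^1\hookrightarrow H^1_{loc}$, Rellich's theorem makes the inclusion $\lecal^1_c\hookrightarrow L^2_c$ compact, so a standard Fredholm argument promotes the displayed estimate to (2) precisely when the kernel of $P_0$ on $\lecal^1_0$ is trivial. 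Elliptic regularity at spatial infinity shows that any element of $\ker P_0$ in $\lecal_0$ already lies in $\lecal^1_0$ — indeed it decays like a harmonic function on $\R^3$ and hence in fact belongs to $\dot{H}^1$ — so this kernel is exactly the space of zero resonant states, giving $(1)\Leftrightarrow(2)$. The same decay observation identifies the $\lecal^1_0$-kernel with the $\dot{H}^1$-kernel of $P_0$; since $P_0-\Delta\colon\dot{H}^1\to\dot{H}^{-1}$ is relatively compact, $P_0$ is a zero-index Fredholm operator between those spaces, and (3) holds iff this kernel vanishes. Hence $(2)\Leftrightarrow(3)$.

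For $(3)\Rightarrow(4)$ I would run the perturbative Fredholm argument near $\omega=0$: writing $P_\omega=P_0+\omega B+g^{00}\omega^2$ and using the bounded inverse $P_0^{-1}\colon\lecal^*\to\lecal^1_0$ now furnished by (2), one factors $P_\omega=P_0\big(I+P_0^{-1}(\omega B+g^{00}\omega^2)\big)$, checks that $\omega B+g^{00}\omega^2$ maps $\lecal^1_\omega$ to $\lecal^*$ with norm $O(|\omega|)$ once the $|\omega|$-weighted components of $\lecal^1_\omega$ are tracked, and inverts by a Neumann series for $|\omega|$ small; analytic Fredholm theory then gives the uniform bound $\norm{R_\omega}_{\lecal^*\to\lecal^1_\omega}\lesssim1$ for $\omega\in\mathcal{H}$ near $0$ and the strong convergence $R_\omega f\to P_0^{-1}f$ on compact sets. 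For $(4)\Rightarrow(5)$ one assembles the near-zero resolvent control from (4) with the unconditional medium-frequency estimate of Theorem \ref{med freq thm} and the high-frequency estimate of Theorem \ref{high freq thm} (supplied by the geometric control hypothesis) via a time-frequency partition of unity and Plancherel's theorem; crucially, the $\norm{\inprod{x}^{-2}u}_{LE}$ error in the high-frequency estimate and the low-frequency error are each dominated by the term $\norm{\partial_t u}_{LE}$ on the right of (5) in their respective regimes, which is precisely why (5) is weaker than genuine local energy decay and need not see the spectral obstructions. Finally, $(5)\Rightarrow(1)$ is the clean contradiction: a zero resonant state $u_0$ yields the time-independent solution $u(t,x)=u_0(x)$ of $Pu=0$ with $\partial_t u\equiv0$, so (5) forces $\sqrt{T}\,\norm{u_0}_{\lecal^1}\lesssim\norm{\nabla u_0}_{L^2}$ for every $T>0$, impossible unless $u_0\equiv0$.

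I expect the main obstacle to be the threshold analysis at $\omega=0$ in the step $(3)\Rightarrow(4)$: zero is a genuine spectral threshold, the family of spaces $\lecal^1_\omega$ degenerates there, and one must show that the Neumann series converges with constants that remain uniform as $\omega$ approaches the real axis while simultaneously obtaining the strong (rather than norm) continuity of $\omega\mapsto R_\omega$ on compact sets. Every other step is soft once the unconditional weighted elliptic estimate, the relative compactness of $P_0-\Delta$, and the already-established medium- and high-frequency estimates are in hand.
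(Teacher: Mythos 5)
Your overall architecture — a cycle $(1)\Leftrightarrow(2)\Leftrightarrow(3)$, then $(3)\Rightarrow(4)\Rightarrow(5)\Rightarrow(1)$ — is reasonable and most of your steps track MST20 Proposition 2.10 (which the paper simply cites, offering only a road-map). Your $(1)\Leftrightarrow(2)\Leftrightarrow(3)$ via the unconditional elliptic estimate, Rellich compactness, Fredholm alternative, and the identification $P_0 - \Delta$ as a relatively compact perturbation matches the paper's remark that ``$P_0$ is an elliptic $AF$ perturbation of the Laplacian.'' Your $(3)\Rightarrow(4)$ by a perturbative Fredholm/Neumann argument near $\omega=0$ matches the paper's phrase ``repeating the proof of Theorem \ref{low freq thm} on the resolvent side produces $(3)\Rightarrow(4)$,'' and your $(5)\Rightarrow(1)$ contradiction, plugging a zero resonant state into (5) and sending $T\rightarrow\infty$, is exactly the mechanism the paper describes in prose just before the theorem.

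The step $(4)\Rightarrow(5)$ is where you diverge from the paper in a way that opens a genuine gap. You assemble (5) from the near-zero resolvent control, the medium-frequency Carleman estimate of Theorem \ref{med freq thm}, \emph{and} the high-frequency estimate of Theorem \ref{high freq thm}. But Theorem \ref{high freq thm} requires the weakly $\varepsilon$-damping dominant condition (and uniformly space-like time slices), neither of which is among the hypotheses of Theorem \ref{zero res equiv}; the statement assumes only geometric control and a uniformly time-like $\partial_t$. The paper sidesteps this precisely by asserting $(3)\Rightarrow(5)$ directly from the low-frequency estimate Theorem \ref{low freq thm}, without routing through (4) or any high-frequency positive-commutator argument; the coarse error $\norm{\partial_t u}_{LE[0,T]}$ on the right of (5) is there exactly so that (5) is indifferent to high-frequency structure — trapping, damping sign, and the skew-adjoint part of $P$ at high frequency — so no appeal to Theorem \ref{high freq thm} (nor to its hypotheses) is needed. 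As written, your $(4)\Rightarrow(5)$ either requires hypotheses the theorem does not grant, or needs a replacement argument for the medium/high-frequency contribution to $\norm{\nabla u}_{LE}$ that does not invoke the damping dominance. A related smaller imprecision: you claim the low-frequency error term is ``dominated by $\norm{\partial_t u}_{LE}$,'' but the error in Theorem \ref{low freq thm} is $\norm{\partial_t u}_{LE^1_c}$, which carries an extra derivative and so is not controlled by $\norm{\partial_t u}_{LE}$ without additional use of the equation; this needs to be addressed if you intend the time-frequency partition to close.
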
 The damping does not arise in $P_0$ and simply acts as an absorbable lower-order term when transitioning back to $P$, so the added context of this paper has absolutely no effect on the work present in \cites{MST20}. In particular, we recover all of the same low frequency work, which is what is utilized for this theorem. Again, we refer the reader to the aforementioned work for more.

Theorem \ref{low freq thm} gives the implication (3)$\implies(5)$, whereas repeating the proof of Theorem \ref{low freq thm} on the resolvent side produces (3)$\implies(4).$ Next, we remark that the bound present in (\ref{weighted low freq}) is more \textit{directly} related to zero not constituting a resonance than the zero non-resonance condition (\ref{zero res}). However, their equivalence can be seen from the fact that $P_0$ is an elliptic $AF$ perturbation of the Laplacian.

Now that we have outlined the obstructions to local energy decay, we proceed with a proof of Theorem \ref{LED thm}. By Theorem \ref{energy res equiv}, it suffices to establish (\ref{le res bd}). Once again, we may follow \cites{MST20}, as the proof only requires the resolvent theory and frequency estimates.
\begin{proof}[Proof of Theorem \ref{LED thm}]
Since there are no negative eigenfunctions, the resolvent is bounded and holomorphic in $\mathcal{H}$, which implies that $$\norm{R_\omega f}_{\dot{H}^1_\omega}\leq C(\imag\omega)\norm{f}_{\lecal^*},\qquad\omega\in\mathcal{H},$$ for some constant $C(\imag\omega)$ depending on $\imag\omega$ which is bounded away from the real line. This is not a uniform bound and a priori may become unbounded as $\imag\omega\rightarrow 0$, but it \textit{does} holds up to any neighborhood of $\R$. Similar to the work in (\ref{unif to le}), this implies the local energy resolvent bound (\ref{le res bd}) away from any neighborhood of the real line. It remains to establish (\ref{le res bd}) within a strip in the lower half-plane sufficiently close to $\R$. We break this strip into three exhaustive cases.
\vskip .05in
\noindent\textbf{Case I: $|\omega| \gg 1$}
\vskip .05in\noindent This is in the high frequency regime, motivating us to use the high frequency estimate (\ref{high freq est}), which is applicable by hypothesis. Let $u$ solve $P_\omega u=f,$ and call $v=e^{i\omega t}u.$ Then, $v$ solves $Pv=g$, where $g=e^{i\omega t}f.$ We will apply the high frequency estimate to the interval $[-T,0]$. More precisely, if we call $\tilde{v}(t,x)=v(t-T,x)$, then this solves $P\tilde{v}=\tilde{g},$ where $\tilde{g}(t,x)=g(t-T,x).$ Applying Theorem \ref{high freq thm} to $\tilde{v}$ provides that
 $$ \norm{\tilde{v}}_{LE^1[0,T]}+\norm{\partial \tilde{v}}_{L^\infty L^2 [0,T]}\lesssim \norm{\partial \tilde{v}(0)}_{L^2}+\norm{\inprod{x}^{-2} \tilde{v}}_{LE[0,T]}+\norm{\tilde{g}}_{LE^*+L^1L^2 [0,T]}.$$
 We immediately calculate that
 \begin{align*}
     \norm{\tilde{v}}_{LE^1[0,T]}&=\left(\frac{e^{2T\imag\omega}-1}{2\imag\omega}\right)^{1/2}\norm{u}_{\mathcal{LE}^1_{\omega}}\\
     \norm{\partial \tilde{v}}_{L^\infty L^2 [0,T]}&= \norm{u}_{\dot{H}^1_{\omega}}\\
     \norm{\partial \tilde{v}(0)}_{L^2}&= e^{T\imag\omega}\norm{u}_{\dot{H}^1_\omega}\\
     \norm{\inprod{x}^{-2} \tilde{v}}_{LE[0,T]}&=\left(\frac{e^{2T\imag\omega}-1}{2\imag\omega}\right)^{1/2}\norm{\inprod{x}^{-2}u}_{\mathcal{LE}}\\
     \norm{\tilde{g}}_{LE^*+L^1L^2 [0,T]}&=\norm{f}_{\left(\frac{\exp (2T\imag\omega)-1}{2\imag\omega}\right)^{-1/2}\mathcal{LE^*}+\left(\frac{\exp (T\imag\omega)-1}{\imag\omega}\right)^{-1}{L^2}}.
 \end{align*} 
 Plugging these calculations into the high frequency bound for $\tilde{v}$ and taking the limit as $T\rightarrow\infty$ yields  $$\norm{u}_{\mathcal{LE}^1_\omega}+|\imag \omega|^{1/2}\norm{u}_{\dot{H}^1_\omega}\lesssim\norm{\inprod{x}^{-2}u}_{\mathcal{LE}}+\norm{f}_{\mathcal{LE}^*+|\imag\omega|^{1/2}L^2}.$$ 
 For sufficiently large $\omega$, the first term on the right absorbs into the first term on the left, which implies (\ref{le res bd}).
 \vskip .05in
 \noindent\textbf{Case II: $|\omega| \ll 1$}
\vskip .05in\noindent
 Since zero is not a resonance, Theorem \ref{zero res equiv} implies that (\ref{le res bd}) holds for all $\omega$ sufficiently close to 0. 
 \vskip .05in
 \noindent\textbf{Case III: $|\omega|\approx 1$} 
\vskip .05in\noindent
 Since there are no real resonances, Theorem \ref{res equiv} implies that (\ref{le res bd}) holds for all $\omega$ in this region. 
 \vskip .1in
 Hence, (\ref{le res bd}) holds.
\end{proof}
\bibliographystyle{amsplain}
\bibliography{main}
\end{document}